\newtheorem{theorem}{Theorem}[section]
\newtheorem{proposition}[theorem]{Proposition}
\newtheorem{lemma}[theorem]{Lemma}
\newtheorem{corollary}[theorem]{Corollary}
\newtheorem{definition}[theorem]{Definition}
\newtheorem{example}[theorem]{Example}
\newtheorem{algorithm}[theorem]{Algorithm}
\newenvironment{proof}{\noindent Proof:}{$\Box$}
\newcommand{\Z}{{\mathbb Z}}
\newcommand{\Q}{{\mathbb Q}}
\newcommand{\N}{{\mathbb N}}
\newcommand{\Ann}{{\mathrm {Ann}}}
\newcommand{\spoly}{{\mathrm {sp}}}
\newcommand{\init}{{\mathrm {in}}}
\newcommand{\mult}{{\mathrm {mult}\,}}
\newcommand{\length}{{\mathrm {length}\,}}
\newcommand{\Lsc}{{\mathcal L}}
\newcommand{\Ksc}{{\mathcal K}}
\newcommand{\Char}{{\mathrm{Char}}}
\newcommand{\dx}{{\partial_x}}
\newcommand{\dy}{{\partial_y}}
\newcommand{\dz}{{\partial_z}}
\title{Localization, local cohomology, 
and the $b$-function\\ of a $D$-module with respect to a polynomial}
\author{Toshinori Oaku}
\date{}
\begin{document}
\maketitle

\begin{abstract}
Given a  $D$-module $M$ generated by a single element, and a polynomial $f$, 
one can construct several $D$-modules attached to $M$ and $f$ 
and can define the notion of the (generalized) $b$-function following M.~Kashiwara. 
These modules are closely related to the localization 
and the local cohomology of $M$. 
We show that the $b$-function, if it exists,  controls these modules and 
present general algorithms for computing these modules and the $b$-function 
(if it exists) without any further assumptions. 
We also give some examples of multiplicity computation of such $D$-modules 
\end{abstract}

\section*{Introduction}

Let $K$ be a field of characteristic zero 
and $K[x] = K[x_1,\dots,x_n]$ be the polynomial ring 
in $n$ variables $x = (x_1,\dots,x_n)$. 
Let $D_n = K[x]\langle \partial\rangle 
= K[x]\langle \partial_{1},\dots, \partial_{n} \rangle $ 
be the $n$-th Weyl algebra, i.e., the ring of differential 
operators with polynomial coefficients with respect to 
the variables $x$, where we denote 
$\partial = (\partial_{1},\dots,\partial_{x})$ 
with $\partial_{i} = \partial_{x_i}= \partial/\partial x_i$ being the
derivation with respect to $x_i$. 
An arbitrary element $P$ of $D_n$ is written in a finite sum
\[
P = \sum_{\alpha\in\N^n} a_\alpha(x)\partial^\alpha 
\quad \mbox{with}\quad
a_\alpha(x) \in K[x], 
\]
where we denote 
$\partial^\alpha 
= \partial_{1}^{\alpha_1}\cdots\partial_{n}^{\alpha_n}$ 
for a multi-index $\alpha = (\alpha_1,\dots,\alpha_n) \in \N^n$ 
with $\N$ being the set of non-negative integers.  
One can define the dimension of a finitely generated left $D_X$-module $M$; 
J. Bernstein \cite{Bernstein1}, \cite{Bernstein2} proved that the dimension of $M$ is 
not less than $n$ unless $M$ is the zero module. 
A finitely generated left $D_n$-module is 
called {\em holonomic} if its dimension is $n$ or else it is the zero module. 

Let $M$ be a finitely generated left $D_n$-module and $f \in K[x]$ be a non-constant polynomial. 
Then the localization $M[f^{-1}]$ and the local cohomology groups 
$H^j_{(f)}(M)$ have natural structures of left $D_n$-module and are holonomic 
if so is $M$, 
as was shown by Kashiwara \cite{KashiwaraII}. 
More generally,  one can construct a left $D_n[s]$-module
$M(u,f,s) = D_n[s](u\otimes f^s)$ with an indeterminate $s$. 
Suppose that $M$ is generated by $u$ over $D_n$. 
Then the {\em (generalized) $b$-function} for $u$ and $f$ is defined to be 
the univariate (and monic) polynomial $b_{u,f}(s)$ of the least degree such that
\[
 b_{u,f}(s)(u\otimes f^{s}) \in D_n[s](u\otimes f^{s+1})
\]
holds. The existence of $b_{u,f}(s)$ was proved by Kashiwara \cite{KashiwaraII} 
under the assumption that $M$ is holonomic outside of the hypersurface $f=0$. 
If $M$ is the polynomial ring $K[x]$ with $u=1$, then $b_{u,f}(s)$ is nothing but 
the classical Bernstein-Sato polynomial, or simply the $b$-function, of $f$. 
In the same way as the Bernstein-Sato polynomial controls the localization of the 
polynomial ring as a $D_n$-module, the $b$-function controls the localization 
$M[f^{-1}]$ or its generalization $D_n(u\otimes f^\lambda)$. 

On the other hand, algorithms to compute $M(u,f,s)$ and the $b$-function 
if it exists were introduced in \cite{OakuAdvance} under the assumption that $M$ is 
$f$-torsion free. These algorithms are based on various Gr\"obner bases 
over the ring of differential operators as is presented, e.g., in 
\cite{SST} and \cite{OakuLecture}. 
Torrelli \cite{Torrelli} studied the $b$-function $b_{u,f}(s)$ systematically 
when $M$ is the local cohomology group $H^k_{(f_1,\dots,f_k)}(K[x])$ 
under the assumption that $f_1,\dots,f_k,f$ define a quasi-homogeneous 
non-isolated singularity, together with the general property of $M(u,f,s)$ 
under the assumption that $M$ is holonomic without $f$-torsion. 

The purpose of our study on the $b$-function and $M(u,f,s)$ is 
twofold: first, we want to clarify how the $b$-function controls 
the module $M(u,f,s)$ and the localization $M[f^{-1}]$ as well as the local 
cohomology $H^1_{(f)}(M)$.  
This will be performed in Sections 1 and 4. 
These results should be more or less well-known under some stronger conditions. 
See, e.g., \cite{Torrelli} and Chapter VI of \cite{Bjork2}, where $M$ is assumed 
to be $f$-torsion free, or regular holonomic. 
The second purpose is to remove the assumption of $f$-saturatedness from our former 
algorithms in \cite{OakuAdvance}. For this purpose, we reinterpret the algorithm 
introduced in \cite{OTW} for the localization $M[f^{-1}]$ in Section 2. 
Our algorithms work at least if $M$ is holonomic outside of $f=0$ without any 
further assumptions. 

In the last section, we study the multiplicity 
(in the sense of Bernstein \cite{Bernstein1}) and the length of 
a holonomic $D$-module, as the most fundamental numerical invariants. 
This can be also used to prove a relation between $b_{u,f}(s)$ and $M(u,f,\lambda)$. 
We also give some examples of the multiplicity computation of the localization or the 
local cohomology.

We use computer algebra system Risa/Asir \cite{asir} for computation of 
Gr\"obner bases over the ring of differential operators, and in particular, 
for computation of $D$-module theoretic integration, which is needed 
in the localization algorithm. 

This work was supported by JSPS Grant-in-Aid for Scientific Research (C) 26400123.

\section{The $b$-function for a $D$-module and a polynomial after Kashiwara} 

Let $K$ be a field of characteristic zero and 
$X = K^n$ be the $n$-dimensional affine space over $K$. 
We denote by $D_X$ the $n$-th Weyl algebra $D_n$ over $K$. 
Let $M$ be a left $D_X$-module and $f \in K[x]$ a non-constant polynomial. 
We can associate several $D_X$-modules with $M$ and $f$ 
by translating the definitions by  Kashiwara \cite{KashiwaraII} for 
analytic $D$-modules to algebraic setting.  
First, the localization $M[f^{-1}] := M \otimes_{K[x]}K[x,f^{-1}]$ 
and the local cohomology groups 
$H^j_{(f)}(M)$ ($j=0,1$) are defined with $M$ being regarded as 
a $K[x]$-module; they become again left $D_X$-modules. 

Introducing an indeterminate $s$, let 
\[
\Lsc := K[x,f^{-1},s]f^s
\]
be the free $K[x,f^{-1},s]$-module with a free generator $f^s$. 
Then $\Lsc$ has a natural structure of left $D_X[s]$-module through 
the action of $x_i$ on $\Lsc$ defined by
\[
\partial_{x_i}(a(x,s)f^{-k}f^s) = 
\left(\frac{\partial a(x,s)}{\partial x_i}f^{-k} 
+ (s-k)f_ia(x,s)f^{-k-1}\right)
f^s \qquad (1 \leq i \leq n)
\]
with $f_i := \partial f/\partial x_i$. 
Sometimes $f^{-k}f^s$ is abbreviated to $f^{s-k}$. 

The tensor product
$M \otimes_{K[x]}\Lsc$ 
has a natural structure of left $D_X[s]$-module induced by
\[
\partial_{x_i}(u\otimes a(x,s)f^s) 
= (\partial_{x_i}u)\otimes a(x,s)f^s + u \otimes \partial_{x_i}(a(x,s)f^s)
\quad (1 \leq i \leq n)
\]
for $u\in M$ and $a(x,s) \in K[x,s]$. 
In what follows, we fix an arbitrary nonzero element $u$ of $M$. 
Let $M(u,f,s) := D_X[s](u\otimes f^s)$  be the left $D_X[s]$-submodule 
of $M\otimes_{K[x]}\Lsc$ generated by $u\otimes f^s$. 
Set 
\[
I(u,f) := \{b(s) \in K[x] \mid b(s)(u\otimes f^s) = P(s)(fu\otimes f^{s}) = 0 
\mbox{ for some $P(s) \in D_X$} \} .
\]
If $I(u,f) \neq \{0\}$, then the (monic) generator 
$b_{u,f}(s)$ of $I(u,f)$ 
is called the {\em (generalized) $b$-function} for $u$ and $f$. 
It was defined by Kashiwara \cite{KashiwaraII} with 
the following existence theorem. 

\begin{theorem}[Kashiwara \cite{KashiwaraII}]
If $M$ is holonomic on $X_f = \{x \in X \mid f(x) \neq 0\}$, 
then $I(u,f) \neq \{0\}$. 
\end{theorem}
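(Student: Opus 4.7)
The plan is to apply Bernstein's dimension theory over the Weyl algebra $R := D_X \otimes_K K(s)$ with coefficients in the field $K(s)$, leveraging the hypothesis that $M$ is holonomic on $X_f$ through Kashiwara's theorem that $M[f^{-1}]$ is then a holonomic $D_X$-module. Set
\[
N := M[f^{-1}] \otimes_K K(s) \cdot f^s,
\]
with $f^s$ a formal symbol and $\partial_i$ acting by the twisted rule $\partial_i(m \otimes f^s) = (\partial_i m) \otimes f^s + sf^{-1}f_i m \otimes f^s$; this is well-defined because $f$ acts invertibly on $M[f^{-1}]$. Since $K[x,f^{-1},s]$ is $f$-invertible, there is a canonical identification $M \otimes_{K[x]} \Lsc \cong M[f^{-1}] \otimes_K K[s]\cdot f^s$ (in particular any $f$-torsion of $M$ is killed), and the flat inclusion $K[s] \hookrightarrow K(s)$ embeds $M \otimes_{K[x]} \Lsc$ into $N$. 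Let $\tilde M := R(u \otimes f^s) \subseteq N$.

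First I would verify that $N$ is holonomic over $R$. Scalar extension $K \to K(s)$ preserves Bernstein dimension, and the twist in the $\partial_i$-action is a zeroth-order perturbation that does not affect principal symbols; hence $N$ has the same characteristic variety as $M[f^{-1}] \otimes_K K(s)$ and is therefore holonomic over $R$. The finitely generated $R$-submodule $\tilde M$ inherits holonomicity and is consequently of finite length. The descending chain
\[
\tilde M = \tilde M_0 \supseteq \tilde M_1 \supseteq \tilde M_2 \supseteq \cdots, \qquad \tilde M_k := R(u \otimes f^{s+k}),
\]
then stabilizes: for some $k_0 \geq 0$ and some $P(s) \in R$ one has $u \otimes f^{s+k_0} = P(s)(u \otimes f^{s+k_0+1})$ in $N$.

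To descend this identity to $k=0$, I would use the $K$-linear bijection $\phi: N \to N$ defined on pure tensors by $m \otimes g(s)\, f^{s+\ell} \mapsto m \otimes g(s-k_0)\, f^{s+\ell-k_0}$; a direct computation (based on $\partial_i f^{s+\ell} = (s+\ell) f^{-1} f_i f^{s+\ell}$) shows that $\phi$ commutes with the $D_X$-action and is semilinear for the shift $\sigma:s \mapsto s-k_0$ of $K(s)$. Applying $\phi$ to the previous identity yields $u \otimes f^s = P(s-k_0)(u \otimes f^{s+1})$ in $N$. Clearing denominators of $P(s-k_0) \in R$ produces a nonzero $b(s) \in K[s]$ and $Q(s) \in D_X[s]$ satisfying $b(s)(u \otimes f^s) = Q(s)(u \otimes f^{s+1}) = Q(s)(fu \otimes f^s)$, an identity that lies in the embedded copy of $M \otimes_{K[x]} \Lsc$. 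Hence $b(s) \in I(u,f)$ and $I(u,f) \neq \{0\}$.

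The main obstacles are (i) exhibiting a good $R$-filtration on $N$ whose associated graded has the correct Bernstein dimension: one must handle the fact that a generic good $D_X$-filtration on $M[f^{-1}]$ need not be stable under multiplication by $f^{-1}$, which appears in the $sf^{-1}f_i$ twist; and (ii) checking that $\phi$ genuinely intertwines the twisted $\partial_i$-action on $N$ with the shifted $R$-action. Both steps are standard in the Bernstein--Kashiwara circle of ideas, but the holonomicity verification in (i) is the most substantive and is where the hypothesis on $M$ is essentially used.
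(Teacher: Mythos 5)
The paper itself does not prove this theorem; it only cites Kashiwara, so your proposal is an attempt at an independent proof along the classical Bernstein--Kashiwara lines. The overall skeleton — pass to $R = D_X \otimes_K K(s)$, argue that the submodule generated by $u\otimes f^s$ has finite length, let the descending chain $R(u\otimes f^{s+k})$ stabilize, and then apply the shift semi-automorphism $\phi$ and clear denominators — is the standard argument and your $\phi$-computation and the passage back into $M\otimes_{K[x]}\Lsc$ are sound. However, the proof has a genuine gap at exactly the load-bearing point, and this gap is not merely technical.

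The issue is the assertion that $N = M[f^{-1}]\otimes_K K(s)\cdot f^s$ is holonomic over $R$. First, you justify this by invoking that $M[f^{-1}]$ is a holonomic $D_X$-module, "by Kashiwara"; but under the hypothesis of the present theorem $M$ is only assumed holonomic on $X_f$, not holonomic, and the localization theorem you cite is stated (both in Kashiwara's paper and in this paper's introduction) for $M$ holonomic. Under the weaker hypothesis, holonomicity of $M[f^{-1}]$ is in the literature deduced \emph{from} the existence of $b_{u,f}(s)$ (it is precisely what Proposition \ref{prop:fs1}(1) uses the $b$-function for), so the argument risks circularity, or at best silently strengthens the hypothesis. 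Second, even granting that $M[f^{-1}]$ is holonomic, the step from there to holonomicity of the twisted module $N$ is not completed: as you yourself observe in obstacle (i), a good Bernstein filtration on $M[f^{-1}]$ need not be stable under multiplication by $f^{-1}f_i$, so the assertion that the twist "does not affect principal symbols" is not yet backed by a filtration that computes the Bernstein dimension of $N$. (For $M$ honestly holonomic one can close this gap by exhibiting a non-good filtration $F'_k := f^{-k}\iota\bigl(F_{(\deg f+1)k}(M)\bigr)\otimes_K K(s)$ on $N$, which one checks directly is stable under the twisted $\partial_i$-action and grows like $k^n$; then every finitely generated $R$-submodule of $N$ is holonomic with uniformly bounded multiplicity, so $N$ is holonomic. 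But this requires a good filtration on $M$ with $n$-dimensional growth, hence the strong hypothesis.) In short, the chain/shift mechanism is fine, but the single substantive assertion — finite length of $R(u\otimes f^s)$ — is left unproved, and the route proposed for it both relies on an essentially equivalent theorem and contains an acknowledged but unfilled filtration gap.
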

When $M = K[x]$ and $u = 1$, the $b$-function 
$b_{1,f}(s)$ is nothing but what is called the 
Bernstein-Sato polynomial, or the $b$-function, associated with $f$. 
In fact, Kashiwara proved this theorem for a module $M$ over 
the ring of differential operators with analytic coefficients and
a complex analytic function $f$. This corresponds to what is 
called the local $b$-function. 
The coincidence of the local $b$-functions in the algebraic 
setting and in the analytic setting is noticed, e.g., 
as Corollary 8.6 of \cite{OakuAdvance}.  
It will turn out in what follows that the $b$-function 
`controls' the $D$-modules associated with $M$ and $f$. 

The $b$-function can exist even if $M$ is not holonomic on $X_f$.

\begin{example}\label{ex:P}\rm
Set $n=2$, $x_1 = x$, $x_2=y$, and $P = x\partial_x^2+\partial_y$.  
Then $M := D_X/D_XP = D_Xu$ with $u$ being the residue class of $1$ 
is not holonomic even outside of $x=0$ (the dimension of $M$ is three), 
but has the $b$-functions 
$b_{u,x}(s) = (s+1)(s+2)$ and $b_{u,y}(s) = s+1$.  In fact, one has
\[
(-x\partial_x^2 + 2(s+1)\partial_x - \partial_y)(u\otimes x^{s+1}) 
= (s+1)(s+2)u\otimes x^s, 
\qquad
P(u\otimes y^{s+1}) = (s+1)u\otimes y^s
\]
in $M\otimes_{K[x,y]}K[x,y,x^{-1}]x^s$ and 
in $M\otimes_{K[x,y]}K[x,y,y^{-1}]y^s$ respectively. 
\end{example}

An algorithm to determine if there exists the $b$-function and to 
compute it if it exists was given 
in \cite{OakuAdvance} under the assumption that $M = D_Xu$ 
is $f$-torsion free, or $f$-saturated, i.e., 
the homomorphism $f : M \rightarrow M$ is injective. 

Let us define a $D_X$-automorphism $t : \Lsc \rightarrow \Lsc$
by 
\[
t(a(x,s)f^{-k}f^s) = a(x,s+1)f^{-k+1}f^s 
\]
for $a(x,s) \in K[x,s]$ and $k \in \N$.
The inverse $t^{-1}$ is defined by
\[
 t^{-1}(a(x,s)f^{-k}f^s) = a(x,s-1)f^{-k-1}f^s.  
\]
It induces a $D_X$-automorphism
\[
t : M\otimes_{K[x]}\Lsc \longrightarrow M\otimes_{K[x]}\Lsc, 
\] 
which also induces a $D_X$-endomorphism of $M(u,f,s)$.　
Note that the actions of $t$ and $s$ on $M(u,f,s)$ satisfies the 
commutation relation $st = t(s-1)$. 
It follows that $tM(u,f,s)$ is a left $D_X[s]$-module. 
It also follows from the definition that $b_{u,f}(s)$ is the minimal 
polynomial of the action $s$ on the left $D_X$-module 
$M(u,f,s)/tM(u,f,s)$
since $P(s)(fu\otimes f^s) = t(P(s-1)(u\otimes f^s))$. 

Let $\lambda \in K$ be a constant. 
Then specializing the parameter $s$ to $\lambda$, we obtain 
left $D_X$-modules
\[
\Lsc(\lambda) := \Lsc/(s-\lambda)\Lsc,
\qquad
M(u,f,\lambda) := M(u,f,s)/(s-\lambda)M(u,f,s).
\]
Let us denote by $f^s|_{s=\lambda}$ and 
$(u\otimes f^s)|_{s=\lambda}$ the residue class of $f^s$ in $\Lsc(\lambda)$, 
and that of $u\otimes f^s$ in $M(u,f,\lambda)$ respectively. 
Note that $M(u,f,\lambda)$ is not, at least a priori,  a submodule of 
$M\otimes_{K[x]}\Lsc(\lambda)$. 
Kashiwara also proved the following fundamental fact, 
to which we shall give an elementary proof in Section 4.  

\begin{theorem}[Kashiwara \cite{KashiwaraII}]
If $M$ is holonomic on $X_f$, 
then $M(u,f,\lambda)$ is a holonomic $D_X$-module for any 
$\lambda \in K$. 
\end{theorem}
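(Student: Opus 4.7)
Plan. I would prove Theorem 1.2 by combining Theorem 1.1 with an iterated form of the $b$-function identity and reducing to the holonomicity of the localization $M[f^{-1}]$, up to a kernel controlled by the $f$-torsion of $M$.

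By Theorem 1.1, the $b$-function $b(s):=b_{u,f}(s)$ exists, together with an operator $Q(s)\in D_X[s]$ such that $b(s)(u\otimes f^s)=Q(s)(u\otimes f^{s+1})$ in $M\otimes_{K[x]}\Lsc$. Applying the $D_X$-automorphism $t$ repeatedly and using $ts=(s+1)t$ yields, for every integer $k\ge 0$,
\[
\prod_{j=0}^{k-1} b(s+j)\cdot(u\otimes f^s) = R_k(s)(u\otimes f^{s+k}),\qquad R_k(s):=Q(s)Q(s+1)\cdots Q(s+k-1).
\]
Setting $v_\lambda:=(u\otimes f^s)|_{s=\lambda}$ and using $u\otimes f^{s+k}=f^k(u\otimes f^s)$, specialization at $s=\lambda$ gives the relation
\[
c_k\,v_\lambda = R_k(\lambda)\,f^k\,v_\lambda\quad\text{in }M(u,f,\lambda),\qquad c_k:=\prod_{j=0}^{k-1} b(\lambda+j)\in K.
\]
Since $b$ has only finitely many roots, there is a $k_0\ge 0$ with $c_{k_0}\neq 0$, and the element $c_{k_0}-R_{k_0}(\lambda)f^{k_0}$ then belongs to $\Ann(v_\lambda)\subset D_X$.

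Next I would consider the natural $D_X$-homomorphism $\Phi:M(u,f,\lambda)\to M\otimes_{K[x]}\Lsc(\lambda)$ sending $v_\lambda\mapsto u\otimes f^\lambda$. Its image $N:=D_X(u\otimes f^\lambda)$ is a cyclic $D_X$-submodule of $M\otimes\Lsc(\lambda)$, which as a $D_X$-module is isomorphic to the $f^\lambda$-twist of $M[f^{-1}]$. Under the hypothesis that $M$ is holonomic on $X_f$, the localization $M[f^{-1}]$ is holonomic as $D_X$-module — its characteristic variety is the closure in $T^*X$ of the $n$-dimensional $\Char(M|_{X_f})$ — and so is its twist $M\otimes\Lsc(\lambda)$. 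Hence $N$ is holonomic, and holonomicity of $M(u,f,\lambda)$ follows from the exact sequence $0\to\ker\Phi\to M(u,f,\lambda)\to N\to 0$ as soon as $\ker\Phi$ is shown to be holonomic.

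The main obstacle is the analysis of $\ker\Phi$, which vanishes when $M$ is $f$-torsion free but can be nontrivial in general: elements coming from the $f$-torsion submodule $H^0_{(f)}(M)\subset M$ may map to zero under $\Phi$. To deal with this, I would apply the short exact sequence $0\to H^0_{(f)}(M)\to M\to M/H^0_{(f)}(M)\to 0$; the torsion-free quotient is handled by the argument above applied verbatim. For the torsion part, $H^0_{(f)}(M)$ is supported on $\{f=0\}$ and trivially holonomic on $X_f$, so Theorem 1.1 supplies its own $b$-function, and an analogous argument — possibly combined with Noetherian induction on the support — yields holonomicity of the contribution to $\ker\Phi$. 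This dévissage is, I expect, the technical core of the elementary proof presented in Section 4.
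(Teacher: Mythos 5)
Your approach is genuinely different from the paper's, and it has a gap that I do not think can be patched by the dévissage you sketch.

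The key false step is the claim that ``$\ker\Phi$ \dots vanishes when $M$ is $f$-torsion free.'' This is not so, and it is not what controls the kernel. The paper's Proposition \ref{prop:fs1}~(2) and Theorem \ref{th:rho} show that the specialization map $\tilde\rho_\lambda:M(u,f,\lambda)\to D_X(u\otimes f^\lambda)$ (to which your $\Phi$ essentially corresponds once you correct the target) is injective \emph{if and only if} $b_{u,f}(\lambda-k)\neq 0$ for every positive integer $k$; this criterion has nothing to do with $f$-torsion of $M$. A concrete counterexample to your claim: take $n=1$, $M=K[x]$ (which is $f$-torsion free for any $f\neq 0$), $u=1$, $f=x$, $\lambda=0$. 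Here $b_{u,f}(s)=s+1$, so $b_{u,f}(-1)=0$. The element $\partial_x x^s=s\,x^{s-1}$ lies in $M(u,f,s)\cap s(M\otimes\Lsc)$, but it does not lie in $sM(u,f,s)$ because $x^{s-1}\notin D_X[s]x^s$ (its image under $\rho_0$ is $x^{-1}\notin K[x]$). Hence $\ker\Phi\neq 0$ even though $M$ has no $f$-torsion, and the proposed short exact sequence $0\to H^0_{(f)}(M)\to M\to M/H^0_{(f)}(M)\to 0$ does not touch the actual source of the kernel. Since your plan hinges on the kernel being a torsion phenomenon, it does not go through. A secondary issue: you identify $M\otimes_{K[x]}\Lsc(\lambda)$ with the $f^\lambda$-twist of $M[f^{-1}]$, but the paper explicitly remarks (with the example $n=1$, $f=x$, $\lambda=0$) that $\Lsc(\lambda)\not\cong K[x,f^{-1}]f^\lambda$ as $D_X$-modules in general, so the target and its holonomicity need more care than you give them.

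For comparison, the paper's Section~4 proof does not rely on the $b$-function or on analyzing a kernel at all. It first reduces to $M$ holonomic and $f$-saturated by replacing $M$ with $\iota(M)$ (using $M(u,f,s)\cong\iota(M)(\iota(u),f,s)$). It then embeds $M(u,f,s)$ into the holonomic $D_Y$-module $N=M\otimes_{K[x]}B_{Z|Y}$ via the map $\psi$, which is injective precisely when $M$ is $f$-saturated (Lemma~\ref{lemma:psi}). Equipping $M(u,f,s)$ with a good filtration with respect to the weight vector $(1,\dots,1,2)$ for $(x,\partial_x,s)$, it obtains Hilbert polynomials of degree $\leq n+1$, and then a difference-of-Hilbert-polynomials estimate combined with the injectivity of $s-\lambda$ (respectively of $t$) on $M(u,f,s)$ shows that every finitely generated $D_X$-submodule of $M(u,f,\lambda)$ (respectively of $M(u,f,s)/tM(u,f,s)$) has dimension $\leq n$ and bounded multiplicity, hence the whole module is holonomic. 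This route sidesteps the question of when $\tilde\rho_\lambda$ is injective, which is exactly where your argument runs into trouble.
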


On the other hand, 
the free $K[x,f^{-1}]$-module $K[x,f^{-1}]f^\lambda$ 
with a free generator $f^\lambda$  has a natural structure of 
left $D_X$-module induced by
\[
\partial_{x_i}(a(x)f^{-k}f^\lambda) 
= \left(\frac{\partial a(x)}{\partial x_i}f^{-k} 
+ (\lambda -k) f_ia(x)f^{-k-1}\right)f^\lambda
\qquad (1 \leq i \leq n)
\]
for $a(x) \in K[x]$ and $k \in \N$. 
In particular, this module is isomorphic to the localization 
$M[f^{-1}]$ if $\lambda$ is an integer. 
We remark that $K[x,f^{-1}]f^\lambda$ is not isomorphic to $\Lsc(\lambda)$ 
in general. For example, set $n=1$, $f =x = x_1$, and $\lambda=0$. 
Then $x^s|_{s=0}$ satisfies $x\partial_x (x^s|_{s=0}) = 0$ but 
$\partial_x (x^s|_{s=0}) \neq 0$, 
while $\partial_xx^0 = 0$ by the definition.

Let us define the specialization homomorphism
\[
\rho_\lambda : M\otimes_{K[x]}\Lsc  \longrightarrow 
M\otimes_{K[x]} K[x,f^{-1}]f^\lambda
\]
by 
\[
\rho_\lambda(v\otimes a(x,s)f^{-k}f^s)  
= v \otimes  a(x,\lambda)f^{-k}f^\lambda
\]
for $v\in M$, $a(x,s) \in K[x,s]$, and $k\in\N$. 
Then 
$
\rho_\lambda(P(s)w) = P(\lambda)\rho_\lambda(w)
$
holds for any $w \in M\otimes_{K[x]}\Lsc$ and $P(s) \in D_X[s]$. 
Since any element of $(s-\lambda)M(u,f,s)$ is sent by $\rho_\lambda$ 
to zero, 
$\rho_\lambda$ induces a surjective $D_X$-homomorphism 
\[
\tilde\rho_\lambda : M(u,f,\lambda) \longrightarrow D_X(u\otimes f^\lambda)
\subset M\otimes_{K[x]}K[x,f^{-1}]f^\lambda. 
\]

\begin{lemma}\label{lemma:expression}
Let $M = D_Xu$ be a left $D_X$-module generated by $u$. 
\begin{enumerate}
\item
Every element of $M\otimes_{K[x]} \Lsc$ can be expressed as 
$Q(s)(u\otimes f^{s-k})$ with some $Q(s) \in D_X[s]$ and $k \in \N$. 
\item
Let $\lambda$ be an arbitrary element of $K$. Then 
every element of $M\otimes_{K[x]}K[x,f^{-1}]f^\lambda$  
can be expressed as 
$Q(u\otimes f^{\lambda-k})$ with some $Q \in D_X$ and $k \in \N$. 
\end{enumerate}
\end{lemma}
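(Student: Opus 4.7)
The plan is to prove both parts by the same strategy: reduce an arbitrary element to a single tensor, then induct on the order of a differential operator producing the $M$-component from $u$.

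I will first handle part (1). An arbitrary element of $M \otimes_{K[x]} \Lsc$ is a finite sum $\sum_i v_i \otimes a_i(x,s) f^{s-k_i}$ with $v_i \in M$, $a_i \in K[x,s]$, $k_i \in \N$. Expanding $a_i(x,s) = \sum_j a_{ij}(x) s^j$, using $K[x]$-linearity of the tensor product to move each $a_{ij}(x)$ to the left factor, and exploiting the centrality of $s$, I can rewrite this sum as $\sum_{i,j} s^j (w_{ij} \otimes f^{s-k_i})$ where $w_{ij} = a_{ij}(x) v_i \in M$. Thus it suffices to prove the following key claim: for every $v \in M$ and $k \in \N$, there exist $Q(s) \in D_X[s]$ and $k' \geq k$ with $v \otimes f^{s-k} = Q(s)(u \otimes f^{s-k'})$.

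Since $M = D_X u$, I write $v = Pu$ and induct on $\ord P$. The base case $\ord P = 0$, i.e.\ $P = a(x) \in K[x]$, is immediate: $v \otimes f^{s-k} = a(x)(u \otimes f^{s-k})$. For the inductive step, by $K$-linearity I may take $P = \partial_{x_j} P_1$ with $P_1$ of strictly smaller order, so that $v = \partial_{x_j} v_1$ where $v_1 = P_1 u$. The Leibniz rule built into the $D_X$-module structure on $M \otimes_{K[x]} \Lsc$ yields
\[
(\partial_{x_j} v_1) \otimes f^{s-k}
= \partial_{x_j}(v_1 \otimes f^{s-k}) - (s-k)(f_j v_1) \otimes f^{s-k-1}.
\]
Both $v_1$ and $f_j v_1 = (f_j P_1)u$ are of the form $P' u$ with $\ord P' < \ord P$, so the inductive hypothesis applies to the two right-hand tensors; bringing them to a common ``denominator'' $k' := \max(k_1', k_2')$ via $u \otimes f^{s-\ell} = f^{k'-\ell}(u \otimes f^{s-k'})$ gives the claim. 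Summing the individual reductions and again passing to a common maximum $k$ produces the single expression $Q(s)(u \otimes f^{s-k})$ asserted by the lemma.

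Part (2) follows by the same argument with $\lambda \in K$ substituted for $s$ throughout; no new ideas are required, and the reasoning is in fact slightly simpler because the coefficients carry no $s$-dependence. The only non-trivial obstacle is bookkeeping: one must track how $k'$ grows during the induction, but the monotone chain $D_X[s](u \otimes f^{s-k}) \subseteq D_X[s](u \otimes f^{s-k-1}) \subseteq \cdots$ ensures every intermediate expression lands inside a single ambient $D_X[s](u \otimes f^{s-k'})$ for sufficiently large $k'$, so the summands combine without difficulty.
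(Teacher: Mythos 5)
Your proof is correct and follows essentially the same route as the paper's: both hinge on the rearranged Leibniz identity $(\partial_{x_j}v)\otimes f^{s-k} = \partial_{x_j}(v\otimes f^{s-k}) - (s-k)f_j(v\otimes f^{s-k-1})$ and argue by induction on the order of the differential operator producing the $M$-component from $u$ (the paper phrases the induction on the multi-index $|\alpha|$, you on $\ord P$, but this is a cosmetic difference). The clean-up steps you add—moving $K[x]$- and $s$-coefficients across the tensor, and passing to a common exponent $k'$ at the end—are exactly the routine bookkeeping the paper leaves implicit.
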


\begin{proof}
From the identity
$\partial_{x_i}(v\otimes f^{s-k}) = (\partial_{x_i}v)\otimes f^{s-k} 
+ v\otimes (s-k)f_if^{s-k-1}$  for any $v\in M$ and $k\in\Z$, we get
\[
(\partial_{x_i}v)\otimes f^{s-k} 
= (\partial_{x_i}f - (s-k)f_i)(v\otimes f^{s-k-1}). 
\]
By induction, we can show that for any multi-index $\alpha \in \N^n$ 
and $k\in\Z$, there exists $Q_\alpha(s) \in D_X[s]$ such that
\[
(\partial_x^\alpha v)\otimes f^{s-k} 
= Q_\alpha(s)(v \otimes f^{s-k-|\alpha|}). 
\]
This proves the statement (1). The statement (2) can be proved similarly. 
\end{proof}

The following proposition should be well-known; 
see, e.g., Propositions 7.1 and 7.4 of \cite{OakuAdvance}. 
The case $M = K[x]$ and $f=1$ was first proved by 
Kashiwara \cite{Kashiwara1}. 

\begin{proposition}\label{prop:fs1}
Let $M$ be a left $D_X$-module generated by $u \in M$ 
and assume that there exists the $b$-function 
$b_{u,f}(s)$. Let $\lambda$ be an element of $K$ and 
suppose that $b_{u,f}(\lambda-k) \neq 0$ for any positive 
integer $k$. Then 
\begin{enumerate}
\item
The image $\rho_\lambda(M(u,f,s)) = D_X(u\otimes f^\lambda)$ coincides with 
$M\otimes_{K[x]}K[x,f^{-1}]f^\lambda$. 
In other words,  
$M\otimes_{K[x]}K[x,f^{-1}]f^\lambda$ is generated by 
$u\otimes f^\lambda$ over $D_X$. 
\item
$\ker\rho_\lambda \cap M(u,f,s)$ coincides with $(s-\lambda)M(u,f,s)$. 
Hence 
$\tilde\rho_\lambda : M(u,f,\lambda) \rightarrow D_X(u\otimes f^\lambda)$ 
is an isomorphism of left $D_X$-modules. 
\end{enumerate}
\end{proposition}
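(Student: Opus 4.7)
The plan is to reduce everything to the defining functional equation for $b_{u,f}(s)$, applied iteratively. By definition there is $P(s) \in D_X[s]$ with $b_{u,f}(s)(u \otimes f^s) = P(s)(u \otimes f^{s+1})$. Shifting $s$ by $-1$ and iterating, one obtains for each $k \geq 1$ an identity
\[
B_k(s)(u \otimes f^{s-k}) = Q_k(s)(u \otimes f^s)
\]
in $M \otimes_{K[x]} \Lsc$, where $B_k(s) := b_{u,f}(s-1) \cdots b_{u,f}(s-k) \in K[s]$ and $Q_k(s) \in D_X[s]$; the hypothesis on $\lambda$ gives $B_k(\lambda) \neq 0$ for every $k \geq 1$, and the iteration is clean because $B_k(s)$ lies in the center $K[s]$ of $D_X[s]$.

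For (1), I would specialize the iterated identity at $s = \lambda$: inverting the scalar $B_k(\lambda) \in K^\times$ places $u \otimes f^{\lambda - k}$ in $D_X(u \otimes f^\lambda)$ for every $k \geq 0$. Combined with Lemma \ref{lemma:expression}(2), which represents an arbitrary element of $M \otimes_{K[x]} K[x, f^{-1}] f^\lambda$ as $Q(u \otimes f^{\lambda - k})$ for some $Q \in D_X$ and $k \in \N$, this gives the asserted generation.

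For (2), only the inclusion $\ker \rho_\lambda \cap M(u, f, s) \subset (s - \lambda) M(u, f, s)$ needs work. I would first observe that $\Lsc$ is $K[s]$-flat (polynomial in $s$), so multiplication by $(s - \lambda)$ is injective on $M \otimes_{K[x]} \Lsc$ and $\ker \rho_\lambda$ equals $(s - \lambda)(M \otimes_{K[x]} \Lsc)$. Consequently, any $w \in \ker \rho_\lambda \cap M(u, f, s)$ admits a unique $w' \in M \otimes_{K[x]} \Lsc$ with $w = (s - \lambda) w'$, and it suffices to show that this $w'$ already belongs to $M(u, f, s)$. Writing $w' = Q(s)(u \otimes f^{s - k})$ via Lemma \ref{lemma:expression}(1) and multiplying by $B_k(s)$, the iterated functional equation puts $B_k(s) w' = Q(s) Q_k(s)(u \otimes f^s) \in M(u, f, s)$. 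Since $B_k(\lambda) \neq 0$, the polynomials $B_k(s)$ and $s - \lambda$ are coprime in $K[s]$; a B\'ezout relation $1 = a(s) B_k(s) + c(s)(s - \lambda)$ then expresses $w' = a(s)(B_k(s) w') + c(s) w$, both of whose summands visibly lie in $M(u, f, s)$.

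The main obstacle is this reverse inclusion in (2): part (1) follows rather directly from the iterated functional equation, but in (2) one must pair the equation with the flatness observation, which legitimizes speaking of $w'$ outside $M(u, f, s)$, and with the B\'ezout trick, which is where the non-vanishing hypothesis on $b_{u,f}$ enters in an essential way. Once $\tilde\rho_\lambda$ is shown to be injective it is automatically an isomorphism, since surjectivity is part (1).
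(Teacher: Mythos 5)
Your argument is correct and takes essentially the same route as the paper: both parts rest on iterating the functional equation for $b_{u,f}$ (the paper's equation (\ref{eq:shift})) together with the non-vanishing of the scalar $b_{u,f}(\lambda-1)\cdots b_{u,f}(\lambda-k)$. In part (2), your $K[s]$-flatness observation and B\'ezout relation repackage, a bit more transparently, what the paper achieves by expanding $Q(s)(u\otimes f^s)$ in powers of $(s-\lambda)$, killing the constant term, factoring out $(s-\lambda)$, and then dividing by the nonzero scalar in the final step.
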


\begin{proof}
(1) 
In view of Lemma \ref{lemma:expression}, 
we have only to show that $u\otimes f^{\lambda-k}$ 
belongs to $\rho_\lambda(M(u,f,s))$ for any $k \in \N$. 
This is obvious for $k=0$ since 
$\rho_\lambda (u\otimes f^s) = u\otimes f^\lambda$.

Let us show that $u\otimes f^{\lambda-k}$  
belongs to $\rho_\lambda(M(u,f,s))$.
Suppose $k \geq 1$.  
There exists $P(s) \in D_X[s]$ such that 
$P(s)(u\otimes f^{s+1}) = b_{u,f}(s)(u\otimes f^s)$. 
Applying $t^{-k}$, we get 
\[
P(s-k)(u\otimes f^{s+1-k}) = b_{u,f}(s-k)(u\otimes f^{s-k})
\]
in $M\otimes_{K[x]}\Lsc$. 
Proceeding inductively, we see that there exists 
$\tilde P(s) \in D_X[s]$ such that
\begin{equation}\label{eq:shift}
\tilde P(s)(u\otimes f^s) = b_{u,f}(s-1)\cdots b_{u,f}(s-k) u\otimes f^{s-k} 
\end{equation}
holds in $M\otimes_{K[x]}\Lsc$.
The homomorphism $\rho_\lambda$ gives an identity
\[
\tilde P(\lambda)(u\otimes f^\lambda) 
= b_{u,f}(\lambda-1)\cdots b_{u,f}(\lambda-k) u\otimes f^{\lambda-k} 
\]
in $M\otimes_{K[x]}K[x,f^{-1}]f^\lambda$. 
Since $b_{u,f}(\lambda-j) \neq 0$ for $j=1,\dots,k$ by the assumption, 
it follows that
\[
u\otimes f^{\lambda-k} = 
\frac{1}{b_{u,f}(\lambda-1)\cdots b_{u,f}(\lambda-k)} 
\tilde P(\lambda)(u\otimes f^{\lambda}).
\]
The right-hand side belongs to $\rho_\lambda(M(u,f,s))$. 
This completes the proof of (1).

(2) Assume $\rho_\lambda(Q(s)(u\otimes f^s)) = 0$ with $Q(s) \in D_X[s]$. 
There exist $l \in\N$ and $Q_j \in D_X$ which are zero except finitely many 
indices $j$ such that
\[
 Q(s)(u\otimes f^s) = \sum_{j \geq 0} (Q_ju) \otimes (s-\lambda)^j f^{s-l}. 
\]
By the assumption, 
$\rho_\lambda(Q(s)(u\otimes f^s)) = (Q_0u)\otimes f^{\lambda-l}$ 
vanishes in $M\otimes_{K[x]}K[x,f^{-1}]f^\lambda$, which means that 
$(Q_0u)\otimes f^{-l}$ vanishes in $M\otimes_{K[x]}K[x,f^{-1}]$. 
It follows that $(Q_0u)\otimes 1 = f^l(Q_0u)\otimes f^{-l} = 0$ in 
$M\otimes_{K[x]}K[x,f^{-1}]$. 
Consequently, $(Q_0u)\otimes f^\lambda$ vanishes in $M\otimes_{K[x]}\Lsc$. 
Thus we have
\[
Q(s)(u\otimes f^s) = 
(s-\lambda)\sum_{j \geq 1} (Q_ju) \otimes (s-\lambda)^{j-1} f^{s-l}
= (s-\lambda)Q'(s)(u\otimes f^{s-k})
\]
with some $k\in\N$ and $Q'(s) \in D_X[s]$ in view of the proof 
of Lemma \ref{lemma:expression}. 
By using (\ref{eq:shift}) we obtain
\[
b_{u,f}(s-1)\cdots b_{u,f}(s-k)Q(s)(u \otimes f^s) 
= (s-\lambda) Q'(s)\tilde P(s)(u\otimes f^{s}). 
\]
Hence 
$b_{u,f}(\lambda-1)\cdots b_{u,f}(\lambda-k)Q(s)(u \otimes f^s)$
belongs to $(s-\lambda)M(u,f,s)$. 
This completes the proof of (2). 
\end{proof}

The following proposition extends Lemma 1.3 of Walther \cite{Walther2} 
for the case $M=K[x]$ and $u=1$ almost verbatim. 

\begin{lemma}\label{lemma:fs2}
Under the same assumption as in the preceding proposition, assume moreover 
that $b_{u,f}(\lambda) = 0$. 
Then one has 
\[
 D_X(fu\otimes f^{\lambda}) \subsetneqq D_X(u\otimes f^\lambda)
\]
in $M\otimes_{K[x]}K[x,f^{-1}]f^\lambda$. 
In particular, $M \otimes_{K[x]}K[x,f^{-1}]f^\lambda$ 
is generated by $u\otimes f^\lambda$, but not by 
$u\otimes f^{\lambda + 1} = fu\otimes f^\lambda$, over $D_X$. 
\end{lemma}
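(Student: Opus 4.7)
The plan is to argue by contradiction. Proposition \ref{prop:fs1}(1) already gives $D_X(u\otimes f^\lambda) = M\otimes_{K[x]}K[x,f^{-1}]f^\lambda$, so the inclusion $D_X(fu\otimes f^\lambda) \subseteq D_X(u\otimes f^\lambda)$ is automatic and what must be shown is strictness. Accordingly, I assume equality and aim to contradict $b_{u,f}(\lambda)=0$.

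Under this assumption, there exists $Q\in D_X$ with $u\otimes f^\lambda = Q(u\otimes f^{\lambda+1})$. Through the isomorphism $\tilde\rho_\lambda:M(u,f,\lambda)\to D_X(u\otimes f^\lambda)$ from Proposition \ref{prop:fs1}(2), this lifts to $(u\otimes f^s)|_{s=\lambda} = Q((u\otimes f^{s+1})|_{s=\lambda})$ in $M(u,f,\lambda)$, equivalently
\[
(u\otimes f^s) - Q(u\otimes f^{s+1}) \in (s-\lambda)M(u,f,s).
\]
Using $M(u,f,s)=D_X[s](u\otimes f^s)$, I absorb the discrepancy into a single operator $P(s)\in D_X[s]$ and rearrange as
\[
\bigl(1 - (s-\lambda)P(s)\bigr)(u\otimes f^s) = Q(u\otimes f^{s+1}) \in tM(u,f,s).
\]

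Next I pass to the quotient $N := M(u,f,s)/tM(u,f,s)$ and denote by $\bar u$ the image of $u\otimes f^s$. The paper has already recorded that $b_{u,f}(s)$ is the minimal polynomial in $s$ annihilating $\bar u$ in $N$. The preceding relation reads $\bar u = (s-\lambda)P(s)\bar u$ in $N$. Factoring $b_{u,f}(s) = (s-\lambda)b_1(s)$, which is available because $b_{u,f}(\lambda)=0$, and using the centrality of $s$ in $D_X[s]$ to commute $b_1(s)$ past $P(s)$, I compute
\[
b_1(s)\bar u = (s-\lambda)b_1(s)P(s)\bar u = P(s)b_{u,f}(s)\bar u = 0.
\]
Since $\deg b_1 < \deg b_{u,f}$, this contradicts the minimality of $b_{u,f}(s)$, completing the argument.

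The only delicate step is the lifting: correctly translating the hypothetical equation from the localization module back to $M(u,f,s)$ modulo $(s-\lambda)$ and packaging the error term as a single $P(s)\in D_X[s]$. Once this is in place, the centrality of $s$ makes the final cancellation $b_1(s)\bar u = 0$ mechanical, and the whole argument reduces to exploiting the minimality characterization of the $b$-function that the paper has already established.
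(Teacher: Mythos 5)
Your proof is correct and follows essentially the same route as the paper's: both use Proposition~\ref{prop:fs1}(2) to lift the hypothetical relation $u\otimes f^\lambda = Q(u\otimes f^{\lambda+1})$ to a statement modulo $(s-\lambda)M(u,f,s)$, and then derive that a proper divisor of $b_{u,f}(s)$ lies in $I(u,f)$, contradicting minimality. The only cosmetic difference is that you phrase the final cancellation via the minimal-polynomial characterization of $b_{u,f}(s)$ on $M(u,f,s)/tM(u,f,s)$, whereas the paper works directly in $I(u,f)$; these are the same computation.
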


\begin{proof}
There exists $P(s) \in D_X[s]$ such that 
$P(s)(fu\otimes f^s) = b_{u,f}(s)(u\otimes f^s)$. 
Assume $D_X(fu\otimes f^{\lambda}) = D_X(u\otimes f^\lambda)$. 
Then there exists $A \in D_X$ such that
$(1-Af)(u\otimes f^\lambda) = 0$. 
By virtue of (2) of the preceding proposition, there exist $Q(s),R(s)
 \in D_X[s]$ such that
\[
 1-Af = Q(s) + (s-\lambda)R(s), \qquad Q(s)(u\otimes f^s) = 0.
\]
It follows that
\begin{align*}
\frac{b_{u,f}(s)}{s-\lambda} (u\otimes f^s) 
&= \frac{b_{u,f}(s)}{s-\lambda} Af(u\otimes f^s)
+ b_{u,f}(s)R(s)(u\otimes f^s)
\\&= 
\left(\frac{b_{u,f}(s)}{s-\lambda}A + R(s)P(s)\right)(fu\otimes f^s)
.
\end{align*}
This means that $b_{u,f}(s)/(s-\lambda)$ belongs to the ideal 
$I(u,f)$, which contradicts the definition of $b_{u,f}(s)$. 
This completes the proof. 
\end{proof}

Summing up we obtain

\begin{theorem}\label{th:gen}
Let $M = D_Xu$ be a left $D_X$-module generated by $u \in M$ and $f \in K[x]$ 
be a non-constant polynomial. Assume that there exists the $b$-function 
$b_{u,f}(s)$ for $u$ and $f$. Then the following conditions 
are equivalent:
\begin{enumerate}
\item
$b_{u,f}(\lambda - k) \neq 0$ for any positive integer $k$. 
\item
$M\otimes_{K[x]}K[x,f^{-1}]f^\lambda$ is generated by $u\otimes f^\lambda$ 
over $D_X$. 
\end{enumerate}
\end{theorem}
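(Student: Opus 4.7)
The plan is to prove the two implications separately. The implication (1) $\Rightarrow$ (2) is exactly the content of Proposition \ref{prop:fs1}(1) applied to $\lambda$, so no new work is required. For (2) $\Rightarrow$ (1), I argue by contraposition: assuming that some positive integer $k$ satisfies $b_{u,f}(\lambda-k)=0$, I will exhibit a proper $D_X$-submodule of $M\otimes_{K[x]}K[x,f^{-1}]f^\lambda$ containing $u\otimes f^\lambda$.

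The key maneuver is to choose $k_0$ to be the \emph{largest} positive integer with $b_{u,f}(\lambda-k_0)=0$; this maximum exists because $b_{u,f}(s)$ has only finitely many roots. Setting $\mu:=\lambda-k_0$, maximality forces $b_{u,f}(\mu-j)\neq 0$ for every positive integer $j$, so Lemma \ref{lemma:fs2} applies at $\mu$ and yields
\[
D_X(fu\otimes f^\mu)\ \subsetneq\ D_X(u\otimes f^\mu)\ =\ M\otimes_{K[x]}K[x,f^{-1}]f^\mu.
\]
I then transport this across the natural $D_X$-isomorphism $\psi\colon M\otimes_{K[x]}K[x,f^{-1}]f^\mu \to M\otimes_{K[x]}K[x,f^{-1}]f^\lambda$ defined by $v\otimes g(x,f^{-1})f^\mu \mapsto v\otimes g(x,f^{-1})f^{-k_0}f^\lambda$; its $D_X$-linearity follows from the defining formula for the $\partial_{x_i}$-action on the twisted modules, where the shift $\mu=\lambda-k_0$ in the twisting parameter exactly compensates. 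Since $f^{k_0}u\otimes f^\mu = f^{k_0-1}(fu\otimes f^\mu)$ lies in $D_X(fu\otimes f^\mu)$ (using $k_0\geq 1$) and corresponds under $\psi$ to $u\otimes f^\lambda$, the cyclic submodule $D_X(u\otimes f^\lambda)$ is contained in the strict submodule $\psi(D_X(fu\otimes f^\mu))$ of $M\otimes_{K[x]}K[x,f^{-1}]f^\lambda$, contradicting (2).

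The main obstacle is the choice of $k_0$: Lemma \ref{lemma:fs2} cannot be invoked at an arbitrary bad shift among $\lambda-1,\lambda-2,\ldots$ because its hypothesis demands non-vanishing of $b_{u,f}$ at every \emph{strictly smaller} shift, and one must therefore descend to the deepest root of $b_{u,f}$ in the sequence before the lemma becomes usable. Once that is arranged, the remaining bookkeeping — checking $D_X$-linearity of $\psi$ and tracking the image of the distinguished generator — reduces to direct verifications from the explicit action formulas already displayed in the paper.
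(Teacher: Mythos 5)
Your proof is correct and follows essentially the same route as the paper's: the forward implication is exactly Proposition \ref{prop:fs1}(1), and the converse is established by selecting the maximal bad shift $k_0$, applying Proposition \ref{prop:fs1}(1) and Lemma \ref{lemma:fs2} at $\mu=\lambda-k_0$, and observing that $u\otimes f^\lambda$ then lies in the proper submodule $D_X(fu\otimes f^\mu)$. The paper carries out the identification of $M\otimes_{K[x]}K[x,f^{-1}]f^\mu$ with $M\otimes_{K[x]}K[x,f^{-1}]f^\lambda$ silently, whereas you make the isomorphism $\psi$ and its $D_X$-linearity explicit; this is a helpful clarification but not a different argument.
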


\begin{proof}
Assume $b_{u,f}(\lambda-k) = 0$ for some positive integer $k$ and 
let $k_0$ be the maximum among such $k$. 
Then by (1) of Proposition \ref{prop:fs1} and Lemma \ref{lemma:fs2}, we have 
\[
 K[x,f^{-1}]f^\lambda = D_X(u\otimes f^{\lambda-{k_0}}) \supsetneqq 
D_X(fu\otimes f^{\lambda-{k_0} +1}) \supset 
D_X(fu\otimes f^{\lambda}). 
\] 
Hence $K[x,f^{-1}]f^\lambda$ is not generated by $u\otimes f^\lambda$. 
\end{proof}

The converse of the statement (2) of Proposition \ref{prop:fs1} 
will be given in Theorem \ref{th:rho} of Section 4 under the additional assumption 
that $M$ is holonomic on $X_f$.

Let us recall local cohomology of $D$-modules. 
Let $M$ be a finitely generated left $D_X$-module, and 
$I$ be an ideal of $K[x]$. Then the $k$-th local cohomology group 
$H^k_{I}(M)$ supported by $I$ 
is defined to be the $k$-th derived functor of the functor 
\[
 M \longmapsto H^0_I(M) = \{u \in M \mid I^k u = 0 
\mbox{ for some $k \in \N$} \}. 
\]
They have natural structure of left $D_X$-module, and 
they are holonomic if so is $M$ as was proved by Kashiwara 
in the analytic category \cite{KashiwaraII}.  

If $I$ is the principal ideal $(f)$ generated by $f \in K[x]$, then 
there exists an exact sequence
\[
0 \longrightarrow H^0_{(f)}(M) \longrightarrow M 
\stackrel{\iota}{\longrightarrow} M[f^{-1}] \longrightarrow H^1_{(f)}(M)
\rightarrow 0
\]
of left $D_X$-modules, 
where $\iota$ stands for the natural homomorphism such that 
$\iota(v) = v\otimes 1$ in $M[f^{-1}] = M\otimes_{K[x]}K[x,f^{-1}]$ 
for $v \in M$. 
Hence there is an isomorphism $H^1_{(f)}(M) \cong M[f^{-1}]/\iota(M)$ 
as left $D_X$-module. 

In general, algorithms to compute $H^i_I(M)$ as left $D_X$-module 
were given in \cite{OakuAdvance} for the case $I$ is principal, 
and in \cite{Walther1} and \cite{OTalgDmod} for general $I$,  
under the condition that $M$ is holonomic.

\begin{corollary}
Let $M = D_Xu$ be a left $D_X$-module generated by $u \in M$ and $f \in K[x]$ 
be a non-constant polynomial. Assume that there exists the $b$-function 
$b_{u,f}(s)$ for $u$ and $f$. Then the following conditions 
are equivalent:
\begin{enumerate}
\item
$b_{u,f}(j) \neq 0$ for any integer $j <k$. 
\item
The localization $M[f^{-1}]$ is generated by $u\otimes f^{-k}$ over $D_X$. 
\item
The local cohomology group $H_{(f)}^1(M)$ is generated by the cohomology 
class $[u\otimes f^{-k}]$ over $D_X$. 
\end{enumerate}
\end{corollary}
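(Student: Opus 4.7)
The plan is to reduce the equivalence (1) $\Leftrightarrow$ (2) to Theorem~\ref{th:gen} by specializing at $\lambda=-k$, and to derive (2) $\Leftrightarrow$ (3) from the four-term exact sequence $0\to H^0_{(f)}(M)\to M\stackrel{\iota}{\longrightarrow} M[f^{-1}]\to H^1_{(f)}(M)\to 0$ together with the cyclicity of $M$.

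For (1) $\Leftrightarrow$ (2), my first step is to identify, for any integer $\lambda$, the $D_X$-module $K[x,f^{-1}]f^\lambda$ with the plain localization $K[x,f^{-1}]$ via the $K[x]$-linear map $af^{-m}f^\lambda\mapsto af^{\lambda-m}$: the correction term $(\lambda-m)f_if^{-m-1}f^\lambda$ appearing in the definition of $\partial_{x_i}$ on the left is exactly the term produced by intrinsically differentiating $af^{\lambda-m}$ inside $K[x,f^{-1}]$. Tensoring with $M$ yields a $D_X$-isomorphism $M\otimes_{K[x]}K[x,f^{-1}]f^{-k}\cong M[f^{-1}]$ that sends $u\otimes f^{-k}$ to $u\otimes f^{-k}$. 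Substituting $\lambda=-k$ in Theorem~\ref{th:gen} then rewrites (2) as the condition $b_{u,f}(-k-m)\neq 0$ for every positive integer $m$, which is precisely condition (1).

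For (2) $\Leftrightarrow$ (3), let $\pi:M[f^{-1}]\twoheadrightarrow H^1_{(f)}(M)$ denote the surjection coming from the exact sequence. Cyclicity of $M=D_Xu$ gives $\iota(M)=D_X\iota(u)=D_X(u\otimes 1)$; and since $u\otimes 1=f^k(u\otimes f^{-k})$ in $M[f^{-1}]$, we deduce $\iota(M)\subseteq D_X(u\otimes f^{-k})$. The implication (2) $\Rightarrow$ (3) is then immediate by applying $\pi$. Conversely, assuming (3) and taking an arbitrary $v\in M[f^{-1}]$, choose $P\in D_X$ with $\pi(v)=P\cdot\pi(u\otimes f^{-k})$; then $v-P(u\otimes f^{-k})\in\ker\pi=\iota(M)\subseteq D_X(u\otimes f^{-k})$, whence $v\in D_X(u\otimes f^{-k})$.

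The main obstacle is the first step: one has to verify that the twisted $D_X$-structure on $K[x,f^{-1}]f^\lambda$ genuinely agrees with the intrinsic structure on $K[x,f^{-1}]$ when $\lambda\in\Z$, and that the two choices of generator $u\otimes f^{-k}$ correspond under this identification. Once that identification is secured, the corollary is just a repackaging of Theorem~\ref{th:gen} together with a short cyclicity argument on the cokernel of $\iota$.
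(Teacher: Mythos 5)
Your proof is correct and follows essentially the same route as the paper's: you obtain (1)$\Leftrightarrow$(2) by specializing Theorem~\ref{th:gen} at $\lambda=-k$, and (2)$\Leftrightarrow$(3) by the same cyclicity-of-the-quotient and lifting arguments on $H^1_{(f)}(M)=M[f^{-1}]/\iota(M)$, the only extra content being the explicit verification that $M\otimes_{K[x]}K[x,f^{-1}]f^{-k}\cong M[f^{-1}]$ as $D_X$-modules (a fact the paper states without proof just after introducing $K[x,f^{-1}]f^\lambda$). One detail you should flag rather than pass over silently: your specialization correctly yields the condition $b_{u,f}(j)\neq 0$ for all integers $j<-k$, so the inequality ``$j<k$'' in the corollary's statement of (1) is evidently a misprint for ``$j<-k$''.
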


\begin{proof}
The equivalence of (1) and (2) is a special case of Theorem \ref{th:gen}. 
In general, if $M[f^{-1}]$ is generated by $u\otimes f^{-k}$, 
then $H_{(f)}^1(M) = M[f^{-1}]/\iota(M)$ is generated by its residue class. 
Conversely, assume that $M[f^{-1}]/\iota(M)$ is generated by 
$[u\otimes f^{-k}]$. Then for any $w \in M[f^{-1}]$, there exist $P,Q \in D_X$ 
such that 
\[
 w = P(u\otimes f^{-k}) + (Qu)\otimes 1 = (P + Qf^k)(u\otimes f^{-k}).
\] 
Henece $M[f^{-1}]$ is generated by $u\otimes f^{-k}$.
\end{proof}

\section{Localization algorithm revisited}

Let $M$ be a left $D_n$-module and $f \in K[x]$ be a nonzero polynomial. 
Let $X = K^n$ be the $n$-dimensional affine space over $K$. 
Then $X_f := \{x \in X \mid f(x) \neq 0\}$ is an affine open subset of $X$. 
Our purpose is to reformulate the algorithm given in \cite{OTW} for 
computing the localization $M[f^{-1}] := M\otimes_{K[x]}K[x,f^{-1}]$ 
as left $D_X$-module 
by using local cohomology, 
hoping to clarify the meaning of the algorithm  as well as to 
make the canonical 
homomorphism $\iota : M \rightarrow M[f^{-1}]$ more explicit. 

We assume in what follows, as well as in \cite{OTW},  that $M$ is holonomic on $X_f$; i.e., 
$\Char(M) \cap \pi^{-1}(X_f)$ is an $n$-dimensional algebraic set 
of $\pi^{-1}(X_f)$, where $\Char(M)$ is the characteristic variety of $M$, 
which is an algebraic set of the cotangent bundle $T^*X = 
\{(x,\xi) \in K^n \times K^n\}$ and  $\pi : T^*X \rightarrow X$ is the 
projection.  

Introducing a new variable $t$, set $Y = X \times K \ni (x,t)$ and 
\[
Z := \{(x,t) \in Y \mid tf(x) = 1\}.
\]
Then $Z$ is an affine subset of $Y$ which is isomorphic to $X_f$. 
Let 
\[
B_{Z|Y} := H^1_{(tf(x) -1)}(K[x,t]) = K[x,t,(tf-1)^{-1}]/K[x,t]
\]
be the first local cohomology group of $K[x,t]$ with support in $Z$, 
which we regard as a left $D_Y$-module. 
An arbitrary element of $B_{Z|Y}$ is expressed as 
\[
\left[ \frac{a(x,t)}{(tf(x) -1)^{k+1}}\right] 
\qquad (k \in\N,\, a(x,t) \in K[x,t]),
\]
where the bracket denotes the residue class in $B_{Z|Y}$. 


Set $f_i = \partial f/\partial x_i$ for $i=1,\dots,n$ and define
\[
\delta^{(k,l)} := \left[ \frac{f^{l+1}}{(tf-1)^{k+1}} \right] 
\]
for $k,l\in \Z$ with $l \geq -1$. 
Note that $\delta^{(k,l)} = 0$ by the definition if $k < 0$. 
As left $K[x,t]$-module, $B_{Z|Y}$ is generated by 
$\delta^{(k,-1)}$ with $k \in \N$. 

We have the following identities for $k,l \geq 0$: 
\begin{align*}
\partial_t\delta^{(k,l)} 
& = 
-(k+1)\left[\frac{f^{l+2}}{(tf-1)^{k+2}}\right]
= -(k+1)\delta^{(k+1,l+1)},
\\
\partial_{x_i}\delta^{(k,l)}
& = (l+1)\left[\frac{f_if^l}{(tf-1)^{k+1}}\right]
 -(k+1)\left[\frac{tf_if^{l+1}}{(tf-1)^{k+2}}\right]
\\
&= (l+1)f_i\delta^{(k,l-1)} 
  -(k+1)\left[\frac{f_i(tf-1+1)f^{l}}{(tf-1)^{k+2}}\right]
\\&=
(l+1)f_i\delta^{(k,l-1)} 
  -(k+1)f_i(\delta^{(k,l-1)} + \delta^{(k+1,l-1)})
\\&
= (l-k)f_i\delta^{(k,l-1)} -(k+1)f_i \delta^{(k+1,l-1)}
,
\\
t\delta^{(k,l)} &=
\left[\frac{tf^{l+1}}{(tf-1)^{k+1}}\right] 
= \left[\frac{\{(tf-1)+1\}f^l}{(tf-1)^{k+1}}\right] 
= \delta^{(k-1,l-1)} + \delta^{(k,l-1)}. 
\end{align*}
In particular, we have
\[
(\partial_t t + k)\delta^{(k,l)} = -(k+1)\delta^{(k+1,l)},
\qquad
t\delta^{(0,0)} = \delta^{(0,-1)}. 
\]
Hence $B_{Z|Y}$ is generated by $\delta^{(0,0)} = [f(tf-1)^{-1}]$ 
as a left $D_Y$-module.

\begin{lemma}
One has 
$(tf-1)\delta^{(0,0)} = 0$ and 
$(\partial_{x_i} -f_i\partial_tt^2)\delta^{(0,0)} = 0$
for $i=1,\dots,n$. 
\end{lemma}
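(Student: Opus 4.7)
The plan is to verify each of the two identities by a direct calculation on explicit representatives in $B_{Z|Y} = K[x,t,(tf-1)^{-1}]/K[x,t]$, using the action formulas already tabulated for the $\delta^{(k,l)}$. The first identity is essentially definitional: since $\delta^{(0,0)} = [f/(tf-1)]$, one has
\[
(tf-1)\delta^{(0,0)} = [f],
\]
and this vanishes in $B_{Z|Y}$ because $f$ belongs to $K[x,t]$.

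For the second identity I would compute each of the two operators on $\delta^{(0,0)}$ separately. Substituting $k=l=0$ into the tabulated formula for $\partial_{x_i}\delta^{(k,l)}$ yields $\partial_{x_i}\delta^{(0,0)} = -f_i\delta^{(1,-1)}$. For the other term, first apply the relation $t\delta^{(0,0)} = \delta^{(0,-1)}$ noted in the excerpt to obtain $t^2\delta^{(0,0)} = t\delta^{(0,-1)} = [t/(tf-1)]$; then differentiate the representative and reduce via $tf = (tf-1)+1$ to obtain $\partial_t [t/(tf-1)] = -[1/(tf-1)^2] = -\delta^{(1,-1)}$. Multiplying this by $f_i$ and subtracting from $\partial_{x_i}\delta^{(0,0)}$ gives zero.

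The only mildly delicate point is that the formula $(\partial_tt + k)\delta^{(k,l)} = -(k+1)\delta^{(k+1,l)}$ was derived under the standing assumption $k,l\geq 0$, whereas the computation above effectively needs it at $l=-1$. I would sidestep this by doing the one required step directly on the representative $[t/(tf-1)]$, as indicated, rather than by quoting the general identity; all remaining manipulations are then purely formal.
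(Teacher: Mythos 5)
Your proof is correct and follows essentially the same approach as the paper: verify the first identity directly from the definition of $B_{Z|Y}$, and verify the second by showing $\partial_{x_i}\delta^{(0,0)} = -f_i\delta^{(1,-1)} = f_i\,\partial_t t^2\delta^{(0,0)}$. The paper computes $\partial_t t^2\delta^{(0,0)}$ by expanding $\partial_t t^2 = t^2\partial_t + 2t$ and applying the tabulated formulas, whereas you compute $t^2\delta^{(0,0)} = [t/(tf-1)]$ and differentiate the representative directly; this is a minor and in fact slightly cleaner variation, since it avoids the boundary-index issue you rightly flag.
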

 
\begin{proof}
The first equality follows immediately from the definition. 
The second equality follows from
\[
\partial_tt^2\delta^{(0,0)} = (t^2\partial_t + 2t)\delta^{(0,0)}
= -\delta^{(1,-1)}
\]
by using the formulae above. 
\end{proof}

Let us regard $B_{Z|Y}$ as a module over the subring $K[x]$ of $D_Y$ 
and consider the localization
\[
 B_{Z|Y}[f^{-1}] := B_{Z|Y}\otimes_{K[x]}K[x,f^{-1}] 
= K[x,t,f^{-1},(tf-1)^{-1}]/K[x,t,f^{-1}]
\]
with respect to $f$. Let us denote the residue class in $B_{Z|Y}[f^{-1}]$ 
by $[\bullet]'$ in order to distinguish it from the residue class in 
$B_{Z|Y}$ which is denoted $[\bullet]$. 

\begin{lemma}
The natural homomorphism
\[
\iota' : B_{Z|Y} \ni \left[ \frac{a(x,t)}{(tf-1)^{k+1}} \right]
\longmapsto 
\left[ \frac{a(x,t)}{(tf-1)^{k+1}} \right]'
\in B_{Z|Y}[f^{-1}] 
\]
is an isomorphism of left $D_Y$-modules. 
\end{lemma}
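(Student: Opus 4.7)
The plan is to reduce the lemma to the single claim that multiplication by $f$ is a bijection on $B_{Z|Y}$. Once this is granted, $\iota'$ is an isomorphism of $K[x]$-modules by the standard fact that localising a module at an already-invertible element has no effect; and $D_Y$-linearity of $\iota'$ is automatic, since it is induced by the identity on representatives in $K[x,t,(tf-1)^{-1}]$, while the $D_Y$-action on both sides comes from the same quotient-rule action on the respective ambient rings.

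The key observation is that multiplication by $tf-1$ is locally nilpotent on $B_{Z|Y}$: any class $\delta = [a(x,t)/(tf-1)^{k+1}]$ satisfies $(tf-1)^{k+1}\delta = [a(x,t)] = 0$ by the very definition of $B_{Z|Y}$. Writing $tf = 1 + (tf-1)$, multiplication by $tf$ on $B_{Z|Y}$ is therefore of the form $1 + N$ with $N$ locally nilpotent, hence is bijective; on any $\delta$ killed by $(tf-1)^{k+1}$, a two-sided inverse of $tf$ is given by the truncated Neumann series $\sum_{j=0}^{k}(-1)^j(tf-1)^j$. Since $f$ and $t$ commute in $K[x,t]$, the operator $t\cdot(tf)^{-1}$ is then a two-sided inverse of multiplication by $f$ on $B_{Z|Y}$.

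For the reduction itself, surjectivity of $\iota'$: an arbitrary element of $B_{Z|Y}[f^{-1}] = B_{Z|Y}\otimes_{K[x]}K[x,f^{-1}]$ has the form $\delta\otimes f^{-N}$ for some $\delta\in B_{Z|Y}$ and $N\geq 0$, and setting $\eta = f^{-N}\delta \in B_{Z|Y}$ (using that $f$ is bijective on $B_{Z|Y}$) one has $\iota'(\eta) = \eta\otimes 1 = \delta\otimes f^{-N}$. Injectivity: if $\iota'(\delta) = \delta\otimes 1 = 0$ in the localisation, then $f^N\delta = 0$ in $B_{Z|Y}$ for some $N\geq 0$, and injectivity of $f$ forces $\delta = 0$. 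I do not foresee any substantive obstacle beyond the initial recognition of the local nilpotence of $(tf-1)$, which is manifest from the quotient description of $B_{Z|Y}$.
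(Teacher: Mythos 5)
Your proof is correct, and it takes a genuinely different route from the paper's. The paper argues injectivity directly from the fact that $f$ and $tf-1$ are relatively prime in $K[x,t]$, and surjectivity by a hands-on induction on $k+m$ using the rewriting $tf=1+(tf-1)$ applied once per step. You instead isolate the structural reason: $tf-1$ acts locally nilpotently on $B_{Z|Y}$ by definition of the quotient, so $tf=1+(tf-1)$ is invertible via a truncated Neumann series, hence so is $f$ (with inverse $t\cdot(tf)^{-1}$), and then localisation at an already-invertible element is the identity. This packages the paper's coprimality argument (your injectivity of $f$) and its inductive rewriting (your Neumann series) into one clean statement, at the cost of having to invoke the general fact about the kernel of $M\to M[f^{-1}]$ and the common-denominator normal form in $M\otimes_{K[x]}K[x,f^{-1}]$. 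Your brief remark on $D_Y$-linearity --- that $\iota'$ is induced by the inclusion of ambient rings, which is $D_Y$-equivariant and carries $K[x,t]$ into $K[x,t,f^{-1}]$ --- is correct and fills a point the paper leaves implicit.
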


\begin{proof}
Assume $\iota'([a(x,t)(tf-1)^{-k-1}]) = 0$ with $a(x,t) \in K[x,t]$.   
Then there exists an integer $l$ such that 
$f^la(x,t)$ is divisible by $(tf-1)^{k+1}$ in $K[x,t]$. 
Since $f$ and $tf-1$ are relatively prime, 
$a(x,t)$ must be divisible by $(1-tf)^{k+1}$. This proves that $\iota$ 
is injective. 

Let us show that $\iota'$ is surjective. 
It suffices to show that $[f^{-m}(tf-1)^{-k-1}]'$  
belongs to the image of $\iota'$ for any $k,m \in \N$ 
by induction on $k+m$, which obviously holds for $k=m=0$. 
Suppose $k+m \geq 1$.  
We have
\begin{align*}&
\left[ \frac{tf}{(tf-1)^{k+1}} \right]
= \left[ \frac{1 + (tf-1)}{(tf-1)^{k+1}} \right]
=
\left[ \frac{1}{(tf-1)^{k+1}} \right]
+
\left[ \frac{1}{(tf-1)^{k}}\right]
\end{align*}
It follows that
\begin{align*}
\left[ \frac{f^{-m}}{(tf-1)^{k+1}} \right]'
&=
\left[ \frac{tf^{1-m}}{(tf-1)^{k+1}} \right]'
- 
\left[ \frac{f^{-m}}{(tf-1)^{k}}\right]'.
\end{align*}
By the induction hypothesis, the right-hand side belongs to the image 
of $\iota'$. 
This completes the proof. 
\end{proof}

\begin{proposition}\label{prop:iso1}\rm
Let $M$ be a finitely generated left $D_n$-module. 
Then the homomorphism
\[
B_{Z|Y}\otimes_{K[x]}M  \stackrel{\sim}{\longrightarrow}   
B_{Z|Y}\otimes_{K[x]}M[f^{-1}]
\]
of left $D_Y$-modules, 
which is induced by the natural homomorphism $\iota : M \rightarrow M[f^{-1}]$ 
is an isomorphism. 
\end{proposition}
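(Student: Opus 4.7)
The proof proposal is to exploit the preceding lemma, which asserts that $\iota': B_{Z|Y} \to B_{Z|Y}[f^{-1}]$ is already an isomorphism. This equivalently says that multiplication by $f$ acts invertibly on $B_{Z|Y}$, and consequently $B_{Z|Y}$ carries a canonical structure of $K[x,f^{-1}]$-module extending its $K[x]$-module structure. Moreover, the resulting action of $f^{-1}$ is compatible with the $D_Y$-structure via the Leibniz rule, since $\partial_{x_i}(f^{-k}b) = -kf^{-k-1}f_ib + f^{-k}\partial_{x_i}b$ holds in $B_{Z|Y}$.

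The plan is then to derive the isomorphism by tensor-product associativity and base change. Writing $M[f^{-1}] = K[x,f^{-1}]\otimes_{K[x]}M$ with $\iota(m) = 1\otimes m$, one has canonical identifications
\[
B_{Z|Y}\otimes_{K[x]} M[f^{-1}] = B_{Z|Y}\otimes_{K[x]}\bigl(K[x,f^{-1}] \otimes_{K[x]} M\bigr) \cong \bigl(B_{Z|Y}\otimes_{K[x]}K[x,f^{-1}]\bigr) \otimes_{K[x]} M.
\]
By the preceding lemma, $B_{Z|Y}\otimes_{K[x]}K[x,f^{-1}] = B_{Z|Y}[f^{-1}] \cong B_{Z|Y}$ via $\iota'$, and substituting yields $B_{Z|Y}\otimes_{K[x]}M[f^{-1}] \cong B_{Z|Y}\otimes_{K[x]}M$. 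A direct chase on elements $b \otimes m$ confirms that this isomorphism is the two-sided inverse of the map induced by $\iota$.

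For concreteness, the inverse can be written explicitly as $\beta\bigl(b \otimes (m\otimes f^{-k})\bigr) = f^{-k}b \otimes m$, where $f^{-k}b$ denotes the action furnished by the preceding lemma. The verifications required are well-definedness on the $K[x]$-balance relations of both tensor products and $D_Y$-equivariance; both reduce to the Leibniz compatibility just noted. The main conceptual point, rather than any calculational obstacle, is recognizing that the preceding lemma is exactly the statement needed: once $f$ is invertible on the left tensor factor, the tensor product over $K[x]$ cannot distinguish between $M$ and its localization $M[f^{-1}]$.
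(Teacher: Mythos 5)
Your argument is essentially identical to the paper's: both proofs use associativity of the tensor product to rewrite $B_{Z|Y}\otimes_{K[x]}M[f^{-1}]$ as $(B_{Z|Y}\otimes_{K[x]}K[x,f^{-1}])\otimes_{K[x]}M = B_{Z|Y}[f^{-1}]\otimes_{K[x]}M$, and then invoke the preceding lemma that $\iota'\colon B_{Z|Y}\to B_{Z|Y}[f^{-1}]$ is an isomorphism. Your extra remarks on the explicit inverse and $D_Y$-equivariance are sound but add nothing beyond what the paper's proof already implies.
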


\begin{proof}
We have 
\begin{align*}
B_{Z|Y}\otimes_{K[x]}M[f^{-1}] 
&= B_{Z|Y}\otimes_{K[x]}(K[x,f^{-1}]\otimes_{K[x]}M)
\\&
= (B_{Z|Y}\otimes_{K[x]}K[x,f^{-1}])\otimes_{K[x]}M
= B_{Z|Y}[f^{-1}]\otimes_{K[x]}M .
\end{align*}
Hence the isomorphism $\iota'$ induces an isomorphism
\[
B_{Z|Y}\otimes_{K[x]}M \stackrel{\sim}{\longrightarrow}
B_{Z|Y}[f^{-1}]\otimes_{K[x]}M 
= B_{Z|Y} \otimes_{K[x]}M[f^{-1}].
\]
\end{proof}

\begin{proposition}\label{prop:iso2}\rm
Let $M$ be a finitely generated left $D_n$-module. 
Then there exists an isomorphism
\[
B_{Z|Y}\otimes_{K[x]}M \stackrel{\sim}{\longrightarrow}  
  B_{Z|Y}[f^{-1}]\otimes_{K[x,f^{-1}]}M[f^{-1}]
\]
of left $D_Y$-modules. 
\end{proposition}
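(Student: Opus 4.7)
The plan is to chain Proposition \ref{prop:iso1} with a standard change-of-base-ring isomorphism for tensor products. First, Proposition \ref{prop:iso1} already supplies a $D_Y$-isomorphism
\[
B_{Z|Y}\otimes_{K[x]} M \stackrel{\sim}{\longrightarrow}
B_{Z|Y}\otimes_{K[x]} M[f^{-1}]
\]
induced by $\iota : M \to M[f^{-1}]$, so it suffices to identify the right-hand side with $B_{Z|Y}[f^{-1}]\otimes_{K[x,f^{-1}]}M[f^{-1}]$ in a $D_Y$-linear way.

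For that second step, I would view $M[f^{-1}]$ as a $K[x,f^{-1}]$-module and use the tautology $M[f^{-1}] = K[x,f^{-1}]\otimes_{K[x,f^{-1}]}M[f^{-1}]$, then invoke associativity of tensor products to rewrite
\[
B_{Z|Y}\otimes_{K[x]}M[f^{-1}]
= B_{Z|Y}\otimes_{K[x]}\bigl(K[x,f^{-1}]\otimes_{K[x,f^{-1}]}M[f^{-1}]\bigr)
\stackrel{\sim}{\longrightarrow}
\bigl(B_{Z|Y}\otimes_{K[x]}K[x,f^{-1}]\bigr)\otimes_{K[x,f^{-1}]}M[f^{-1}].
\]
By the definition used in Section 2 we have $B_{Z|Y}\otimes_{K[x]}K[x,f^{-1}] = B_{Z|Y}[f^{-1}]$, so the right-hand side is precisely $B_{Z|Y}[f^{-1}]\otimes_{K[x,f^{-1}]}M[f^{-1}]$. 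Composing with the isomorphism from Proposition \ref{prop:iso1} produces the desired map.

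The only routine verification needed is that the associativity isomorphism respects the $D_Y$-action; I expect no real obstacle there, because on both sides the $D_Y$-module structure comes entirely from the first tensor factor ($B_{Z|Y}$, respectively $B_{Z|Y}[f^{-1}]$), while the rearrangement only touches the commutative coefficient rings $K[x]$ and $K[x,f^{-1}]$ over which one is tensoring. In short, the substantive content is already packaged in Proposition \ref{prop:iso1}, and Proposition \ref{prop:iso2} follows by a purely formal identification of iterated tensor products.
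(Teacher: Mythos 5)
Your argument matches the paper's: the paper also reduces to Proposition \ref{prop:iso1} by the associativity rewriting $B_{Z|Y}[f^{-1}]\otimes_{K[x,f^{-1}]}M[f^{-1}]=(B_{Z|Y}\otimes_{K[x]}K[x,f^{-1}])\otimes_{K[x,f^{-1}]}M[f^{-1}]=B_{Z|Y}\otimes_{K[x]}M[f^{-1}]$. One small inaccuracy in your closing remark: the $D_Y$-action does not come \emph{entirely} from the first tensor factor (the operators $\partial_{x_i}$ act by the Leibniz rule on both factors), but the associativity isomorphism is nevertheless $D_Y$-linear because the action on $B_{Z|Y}[f^{-1}]=B_{Z|Y}\otimes_{K[x]}K[x,f^{-1}]$ is itself defined so that the regrouping intertwines the two Leibniz actions.
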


\begin{proof}
We have
\[
B_{Z|Y}[f^{-1}]\otimes_{K[x,f^{-1}]}M[f^{-1}]
= (B_{Z|Y}\otimes_{K[x]}K[x,f^{-1}])\otimes_{K[x,f^{-1}]}M[f^{-1}]
= B_{Z|Y}\otimes_{K[x]}M[f^{-1}]. 
\]
This proves the assertion combined with the preceding proposition. 
\end{proof}

Let $D_X[f^{-1}] : = K[x,f^{-1}]\otimes_{K[x]}D_X$ and 
$D_Y[f^{-1}] := K[x,f^{-1}]\otimes_{K[x]}D_Y$ 
be the localization of $D_X$ and $D_Y$ by $f$, which can be 
regarded as ring extension of $D_X$ and of $D_Y$ respectively. 
Then $B_{Z|Y}[f^{-1}]$ and 
$B_{Z|Y}[f^{-1}]\otimes_{K[x,f^{-1}]}M[f^{-1}]$ have natural 
structures of left $D_{Y}[f^{-1}]$-module. 

\begin{definition}\rm
We set $\delta^{(j)} = \iota'(\delta^{(j,j)})$ for $j \in \N$. 
We denote $\delta = \delta^{(0)}$. 
\end{definition}

\begin{lemma}\label{lemma:maximal}
As an element of the left $D_Y[f^{-1}]$-module $B_{Z|Y}[f^{-1}]$, 
the annihilator of $\delta$  
coincides with the left ideal of $D_Y[f^{-1}]$ generated by 
\[
t-f^{-1},\quad \partial_{x_i} - f_if^{-2}\partial_t \quad (i=1,\dots,n).
\]
\end{lemma}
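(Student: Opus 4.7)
The plan is to prove both inclusions between the stated left ideal $J$ (generated by $t - f^{-1}$ and the $\partial_{x_i} - f_if^{-2}\partial_t$) and $\Ann_{D_Y[f^{-1}]}(\delta)$. For $J \subseteq \Ann(\delta)$, both relations are verified directly: $t\delta = f^{-1}\delta$ follows from $[tf(tf-1)^{-1}]' = [(tf-1)^{-1}]'$, and applying $\iota'$ to the preceding lemma's identity $(\partial_{x_i} - f_i\partial_tt^2)\delta^{(0,0)} = 0$, combined with the commutation $\partial_tt^2 = t^2\partial_t + 2t$ and $t\delta = f^{-1}\delta$, yields $\partial_{x_i}\delta = f_if^{-2}\partial_t\delta$.

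For the reverse inclusion I would make a change of variables. Set $u := t - f^{-1}$ and $D_i := \partial_{x_i} - f_if^{-2}\partial_t$. A routine commutator check (using $\partial_{x_i}(f^{-1}) = -f_if^{-2}$ and the symmetry of the mixed second partials of $f$) shows that $(x_1,\dots,x_n,u;D_1,\dots,D_n,\partial_t)$ satisfies the canonical $(n+1)$-dimensional Weyl algebra relations, so that $D_Y[f^{-1}] = K[x,u,f^{-1}]\langle D_1,\dots,D_n,\partial_t\rangle$ as a ring. In these coordinates $J$ is the classical left ideal defining the delta-function module along $u=0$.

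I would then show that $D_Y[f^{-1}]/J$ is spanned as a left $R$-module, where $R := K[x,f^{-1}]$, by the images of $\{\partial_t^k\}_{k\geq 0}$. Writing any element in the Weyl normal form $\sum a_{\alpha,j,\beta}(x,f^{-1})u^j D^\alpha\partial_t^\beta$, terms with $|\alpha|\geq 1$ already lie in $J$ since the $D_i$ commute with $\partial_t$ and among themselves, so one $D_i$-factor can be pushed to the extreme right; terms with $|\alpha|=0$ and $j\geq 1$ are reduced by the identity $u\partial_t^\beta = \partial_t^\beta u - \beta\partial_t^{\beta-1}$, whose first summand lies in $D_Y[f^{-1}]\cdot u \subseteq J$, giving inductively $u^j\partial_t^\beta \equiv (-1)^j\beta(\beta-1)\cdots(\beta-j+1)\partial_t^{\beta-j} \pmod J$ (and $\equiv 0$ if $j>\beta$). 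In parallel, $B_{Z|Y}[f^{-1}]$ is a free $R$-module with basis $\{[(tf-1)^{-(k+1)}]'\}_{k\geq 0}$ (one reduces the $t$-degree in any representative via $t = f^{-1} + f^{-1}(tf-1)$), and a direct computation gives $\partial_t^k\delta = (-1)^k k!\,f^{k+1}[(tf-1)^{-(k+1)}]'$, so $\{\partial_t^k\delta\}_{k\geq 0}$ is likewise a free $R$-basis. Since the canonical surjection $D_Y[f^{-1}]/J \twoheadrightarrow D_Y[f^{-1}]\delta = B_{Z|Y}[f^{-1}]$ sends a spanning set to a free $R$-basis, it must be an $R$-module isomorphism, so $J = \Ann_{D_Y[f^{-1}]}(\delta)$.

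The main obstacle is the reduction of $u^j \partial_t^\beta$ modulo $J$. The naive substitution $u \mapsto 0$ is invalid because $[\partial_t, u] = 1$; the commutation identity $u\partial_t^\beta = \partial_t^\beta u - \beta \partial_t^{\beta-1}$ must be iterated carefully, and the resulting combinatorial factor $(-1)^j \beta(\beta-1)\cdots(\beta-j+1)$ is precisely what matches the $R$-basis $\{\partial_t^k\delta\}$ on the image side and forces the final map to be bijective rather than merely surjective.
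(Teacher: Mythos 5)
Your proof is correct and follows the same overall plan as the paper's: verify that the proposed generators annihilate $\delta$, reduce an arbitrary annihilating operator modulo the ideal $J$ to one lying in $K[x,f^{-1},\partial_t]$, and then argue that this reduced operator must vanish. The distinctive ingredient you add is the explicit change of Weyl coordinates $u = t - f^{-1}$, $D_i = \partial_{x_i} - f_if^{-2}\partial_t$; this makes the normal-form reduction completely rigorous, whereas the paper merely asserts that every $P \in D_Y[f^{-1}]$ can be written as $P = Q_0(t-f^{-1}) + \sum_i Q_i(\partial_{x_i}-f_if^{-2}\partial_t) + R$ with $R \in K[x,f^{-1},\partial_t]$, leaving the verification to the reader. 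You also replace the paper's direct faithfulness computation (if $R\delta = 0$ with $R = \sum r_j(x)f^{-k}\partial_t^j$, then each $r_j = 0$, by comparing pole orders along $tf=1$) with a surjection-onto-free-basis argument: the classes of $\partial_t^k$ span $D_Y[f^{-1}]/J$ over $K[x,f^{-1}]$ and map onto the free basis $\{\partial_t^k\delta\}$, forcing bijectivity. The two endgames are logically equivalent, but your phrasing has a side benefit: it establishes directly that $D_Y[f^{-1}]/J$ is a free $K[x,f^{-1}]$-module on $\{\partial_t^k\}_{k\ge 0}$, which is precisely what the paper extracts from this lemma in Proposition \ref{prop:BZYbasis}. (One small remark: in checking $\partial_{x_i}\delta = f_if^{-2}\partial_t\delta$ you can shortcut the commutator gymnastics; since $t^2\delta = f^{-2}\delta$ and $f^{-2}$ is independent of $t$, one gets $\partial_t t^2\delta = \partial_t(f^{-2}\delta) = f^{-2}\partial_t\delta$ directly.)
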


\begin{proof}
Let us first verify that these operators annihilate $\delta$. 
In fact, we have
\begin{align*}&
 (t - f^{-1})\delta 
= f^{-1}\iota'((tf-1)\delta^{(0,0)}) = 0,
\\&
(\partial_{x_i} - f_if^{-2}\partial_t)\delta
= \iota'(\partial_{x_i}\delta^{(0,0)})
 - f^{-2}\iota'(f_i\partial_t\delta^{(0,0)})
\\&
= \iota'(-f_i\delta^{(1,-1)}) + f^{-2}\iota'(f_if^2\delta^{(1,-1)})
= 0.
\end{align*}

Assume $P \in D_Y[f^{-1}]$ annihilates $\delta$. 
There exist elements $Q_0,Q_1,\dots,Q_n, R$ of $D_Y[f^{-1}]$ such that
\[
P = Q_0(t - f^{-1}) + \sum_{i=1}^nQ_i(\partial_{x_i} - f_if^{-2}\partial_t) 
+ R
\]
and that $R$ belongs to $K[x,f^{-1},\partial_t]$. 
Writing $R$ in a finite sum  $\sum_{j=0}^l r_j(x)f^{-k}\partial_t^j$ 
with $r_j(x) \in K[x]$ and $k,l \in \N$, we have
\[
0 = R\delta
= \sum_{j=0}^l f^{-k}r_j(x)\partial_t^j\delta
= \sum_{j=0}^l (-1)^jj! f^{-k}r_j(x)\delta^{(j)}
= f^{-k}\left[\frac{\sum_{j=0}^l (-1)^jj! (tf-1)^{l-j}r_j(x)}{(tf-1)^{l+1}}\right]'. 
\]
Since $\iota'$ is injective, this implies
that $r_j(x) = 0$ for any $j\geq 0$, that is, $R=0$. 
\end{proof}


\begin{proposition}\label{prop:BZYbasis}
\rm
An element of $ B_{Z|Y}[f^{-1}]\otimes_{K[x,f^{-1}]}M[f^{-1}]$ is 
uniquely expressed as a finite sum
$
\sum_{j\geq 0} \delta^{(j)}\otimes v_j 
$
with $v_j \in M[f^{-1}]$. 
\end{proposition}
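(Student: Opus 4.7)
The plan is to reduce the proposition to the following freeness statement: $B_{Z|Y}[f^{-1}]$ is a free $K[x,f^{-1}]$-module with basis $\{\delta^{(j)}\}_{j \geq 0}$. Granting this, distributing the tensor product over the direct sum gives
\[
B_{Z|Y}[f^{-1}] \otimes_{K[x,f^{-1}]} M[f^{-1}] \;\cong\; \bigoplus_{j \geq 0} K[x,f^{-1}]\,\delta^{(j)} \otimes_{K[x,f^{-1}]} M[f^{-1}] \;\cong\; \bigoplus_{j \geq 0} M[f^{-1}],
\]
and under this identification the sum $\sum_j \delta^{(j)} \otimes v_j$ corresponds to the tuple $(v_j)_{j \geq 0}$. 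Both existence and uniqueness of the claimed expression are then immediate.

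To establish the freeness, I would work inside $K[x,f^{-1}][t,(tf-1)^{-1}]$, whose quotient by $K[x,f^{-1}][t]$ is $B_{Z|Y}[f^{-1}]$. Since $tf-1 = f(t-f^{-1})$ with $f$ a unit in $K[x,f^{-1}]$, the substitution $u = t - f^{-1}$ identifies $K[x,f^{-1}][t,(tf-1)^{-1}]$ with $K[x,f^{-1}][u,u^{-1}]$ and the subring $K[x,f^{-1}][t]$ with $K[x,f^{-1}][u]$. The quotient $K[x,f^{-1}][u,u^{-1}]/K[x,f^{-1}][u]$ is manifestly free over $K[x,f^{-1}]$ with basis $\{u^{-k}\}_{k \geq 1}$. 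Translating back via $u^{-(j+1)} = f^{j+1}(tf-1)^{-(j+1)}$ gives $[u^{-(j+1)}]' = [f^{j+1}/(tf-1)^{j+1}]' = \iota'(\delta^{(j,j)}) = \delta^{(j)}$, so $\{\delta^{(j)}\}_{j \geq 0}$ is a free $K[x,f^{-1}]$-basis of $B_{Z|Y}[f^{-1}]$.

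The main obstacle is not depth but bookkeeping: one must carry out an elementary partial-fraction decomposition in a form that matches exactly the definition of $\delta^{(j)}$. An alternative route I would keep in mind, staying closer to the proofs already in the paper, is as follows. Existence can be read off from Lemma \ref{lemma:maximal}, which reduces any action of $D_Y[f^{-1}]$ on $\delta$ to an action of $K[x,f^{-1}]\langle\partial_t\rangle$, combined with the identity $\partial_t^j \delta = (-1)^j j!\,\delta^{(j)}$. For uniqueness, given a relation $\sum_j f^{-N} r_j(x)\,\delta^{(j)} = 0$ in $B_{Z|Y}[f^{-1}]$ (clearing denominators), the injectivity of $\iota'$ and the fact that $f^{-N}$ is a unit there force $\sum_j r_j(x)\,\delta^{(j,j)} = 0$ in $B_{Z|Y}$; expressing this over a common denominator as in the proof of Lemma \ref{lemma:maximal}, the numerator $\sum_j r_j(x) f^{j+1}(tf-1)^{l-j}$ is a polynomial in $t$ of degree $\leq l$ that must be divisible by $(tf-1)^{l+1}$, hence vanishes, and since the summands have pairwise distinct $t$-degrees one concludes $r_j(x) = 0$ for all $j$.
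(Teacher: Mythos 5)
Your proof is correct. The paper's own argument is shorter: it invokes Lemma \ref{lemma:maximal} to identify $B_{Z|Y}[f^{-1}]$ with $D_Y[f^{-1}]/\Ann(\delta) \cong K[x,f^{-1},\partial_t]$ as a left $D_Y[f^{-1}]$-module, and then reads off that $\delta^{(j)} = (-1)^j(1/j!)\partial_t^j\delta$ is a free $K[x,f^{-1}]$-basis. Your primary route is genuinely different in spirit: it is purely commutative, making no use of Lemma \ref{lemma:maximal} or of the $\partial_t$-action at all. The change of variable $u = t - f^{-1}$, which is legitimate precisely because $f$ is a unit in $K[x,f^{-1}]$, reduces the freeness claim to the transparent fact that $K[x,f^{-1}][u,u^{-1}]/K[x,f^{-1}][u]$ is free on $\{u^{-k}\}_{k\geq 1}$, and the identification $u^{-(j+1)} = f^{j+1}(tf-1)^{-(j+1)}$ matches the paper's $\delta^{(j)}$ exactly. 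What the paper's route buys is economy, since Lemma \ref{lemma:maximal} is needed anyway for Theorem \ref{th:J}; what your route buys is a self-contained, ring-theoretic reason for the freeness that is arguably more illuminating and does not depend on having already computed an annihilator ideal. Your alternative route at the end (existence via Lemma \ref{lemma:maximal}, uniqueness via clearing denominators, injectivity of $\iota'$, and a $t$-degree count on the numerator) is essentially a reconstruction of the argument already embedded in the paper's proof of Lemma \ref{lemma:maximal}; the phrase ``pairwise distinct $t$-degrees'' is a slight shortcut, since the point is really that looking at the coefficient of $t^l, t^{l-1}, \dots$ in turn forces $r_0 = r_1 = \cdots = 0$, but this is a standard triangularity argument and not a gap.
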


\begin{proof}
By Lemma \ref{lemma:maximal}, $B_{Z|Y}[f^{-1}]$ is isomorphic 
to $K[x,f^{-1},\partial_t]$ as left $D_Y[f^{-1}]$-module.  
Hence $\delta^{(j)} = (-1)^j(1/j!)\partial_t^j\delta$ 
($j \in \N$) constitute a free basis 
of $B_{Z|Y}[f^{-1}]$ over $K[x,f^{-1}]$. 
This implies the assertion of the proposition. 
\end{proof}

\begin{definition}\rm
Set $\vartheta_i := \partial_{x_i} - f_i\partial_t t^2$ 
for $i=1,\dots,n$.  
Define a ring homomorphism $\tau$ from $D_X$ to $D_Y$ by
\[
\tau : D_X \ni P(x,\partial_x) \longmapsto 
P(x,\vartheta_1,\dots,\vartheta_n)
\in D_Y. 
\]
Since $\vartheta_1,\dots,\vartheta_n$ commute with each other, 
and $[\vartheta_i,x_j] = \delta_{ij}$, this substitution is a well-defined
ring homomorphism.  
\end{definition}

\begin{lemma}\label{lemma:tau}
One has
\[
\delta^{(0,0)}\otimes Pv = \tau(P)(\delta^{(0,0)}\otimes v),
\qquad
\delta\otimes Pv' = \tau(P)(\delta\otimes v')
\]
in $B_{Z|Y}\otimes_{K[x]}M$ and in 
$B_{Z|Y}[f^{-1}]\otimes_{K[x,f^{-1}]}M[f^{-1}]$ 
respectively
for any $v \in M$, $v' \in M[f^{-1}]$ and $P \in D_X$.
\end{lemma}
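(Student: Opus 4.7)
The plan is to check the identity on a set of generators of $D_X$ as a $K$-algebra and extend by induction. Since both sides of the asserted identity are $K$-linear in $P$, and since $D_X$ is generated as a $K$-algebra by $K[x]$ together with $\partial_{x_1},\dots,\partial_{x_n}$, it suffices to verify the identity when $P = a(x) \in K[x]$ and when $P = \partial_{x_i}$, and then to propagate through products by exploiting the fact that $\tau$ is a ring homomorphism.

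The case $P = a(x)$ is immediate: $\tau(a(x)) = a(x)$, and since the tensor product is formed over $K[x]$, both sides equal $(a(x)\delta^{(0,0)}) \otimes v$. For $P = \partial_{x_i}$, I apply the Leibniz rule giving the $D_Y$-module structure on $B_{Z|Y}\otimes_{K[x]}M$,
\[
\delta^{(0,0)} \otimes \partial_{x_i}v = \partial_{x_i}(\delta^{(0,0)} \otimes v) - (\partial_{x_i}\delta^{(0,0)}) \otimes v,
\]
and substitute the earlier lemma's identity $\partial_{x_i}\delta^{(0,0)} = f_i\partial_t t^2\,\delta^{(0,0)}$. Because $t$ and $\partial_t$ lie in $D_Y$ but not in $K[x]$ and $M$ carries no $t$-action, they act on $B_{Z|Y}\otimes_{K[x]}M$ through the first factor alone; hence $(f_i\partial_t t^2\,\delta^{(0,0)}) \otimes v = f_i\partial_t t^2(\delta^{(0,0)} \otimes v)$. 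Collecting terms gives $\vartheta_i(\delta^{(0,0)} \otimes v) = \tau(\partial_{x_i})(\delta^{(0,0)} \otimes v)$. For the inductive step, if the identity holds for $P$ and for $Q$ at every element of $M$, then applying it first to $Qv \in M$ and then to $v$,
\[
\delta^{(0,0)} \otimes PQv = \tau(P)(\delta^{(0,0)} \otimes Qv) = \tau(P)\tau(Q)(\delta^{(0,0)} \otimes v) = \tau(PQ)(\delta^{(0,0)} \otimes v),
\]
where the last equality uses that $\tau$ is a ring homomorphism. Induction on word length in the generators then completes the proof of the first identity.

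The second identity, in $B_{Z|Y}[f^{-1}] \otimes_{K[x,f^{-1}]} M[f^{-1}]$, follows by the same argument applied to $\delta$ in place of $\delta^{(0,0)}$, now invoking Lemma \ref{lemma:maximal} which gives $(\partial_{x_i} - f_if^{-2}\partial_t)\delta = 0$; this is compatible with the first version, since $(t - f^{-1})\delta = 0$ yields $f_i\partial_t t^2\,\delta = f_if^{-2}\partial_t\,\delta$ in $B_{Z|Y}[f^{-1}]$. The only real bookkeeping — and the spot where one could go astray — is the $D_Y$-action on the tensor product: $t$ and $\partial_t$ must be pulled through on the first factor alone, while $\partial_{x_i}$ acts via Leibniz across the tensor. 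Once that convention is clear, the conceptual content of the lemma is captured entirely by the single annihilation identity $\vartheta_i\,\delta^{(0,0)} = 0$, and no further obstacle remains.
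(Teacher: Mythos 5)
Your proof is correct and follows essentially the same route as the paper's: verify the identity for $P=\partial_{x_i}$ via the Leibniz rule, the annihilation identity $\vartheta_i\delta^{(0,0)}=0$, and the observation that $t,\partial_t$ act through the first tensor factor alone, then induct. You spell out a few steps the paper leaves implicit — the $K[x]$-case, the multiplicative induction, and how the second identity reduces to $(\partial_{x_i}-f_if^{-2}\partial_t)\delta=0$ — but the underlying argument is the same.
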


\begin{proof}
We have only to show the first equality. 
Since $ (\partial_{x_i} - f_i\partial_t t^2)\delta^{(0,0)} = 0$, 
we have
\begin{align*}
\tau(\partial_{x_i})(\delta^{(0,0)}\otimes v) 
&= (\partial_{x_i}\delta^{(0,0)})\otimes v 
  + \delta^{(0,0)}\otimes \partial_{x_i}v 
- f_i(\partial_t^2 t^2\delta^{(0,0)})\otimes v 
\\&
= (\partial_{x_i} - f_i\partial_t t^2)\delta^{(0,0)} \otimes v
 + \delta^{(0,0)}\otimes \partial_{x_i}v
= \delta^{(0,0)}\otimes \partial_{x_i}v. 
\end{align*}
We can verify $\tau(P)(\delta^{(0,0)}\otimes v) = \delta^{(0,0)}\otimes(Pv)$ 
by induction on the order of $P$. 
\end{proof}

\begin{proposition}\label{prop:ann}
Let $v \in M[f^{-1}]$, $P \in D_X$, and $k\in\N$. 
Then $P(f^{-k}v) = 0$ holds in $M[f^{-1}]$ 
if and only if $\tau(P)t^k(\delta\otimes v) = 0$ holds 
in $B_{Z|Y}[f^{-1}]\otimes_{K[x,f^{-1}]}M[f^{-1}]$. 
\end{proposition}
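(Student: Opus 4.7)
The plan is to reduce the claimed equivalence to a direct computation in $B_{Z|Y}[f^{-1}]\otimes_{K[x,f^{-1}]}M[f^{-1}]$ that turns $\tau(P)t^k(\delta\otimes v)$ into $\delta\otimes P(f^{-k}v)$, and then to invoke the unique-expression result of Proposition \ref{prop:BZYbasis} to read off the nonvanishing.

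First I would use Lemma \ref{lemma:maximal} to move $t$ across $\delta$: since $(t-f^{-1})\delta=0$ in $B_{Z|Y}[f^{-1}]$, iterating gives $t^k\delta = f^{-k}\delta$ for every $k\in\N$. Because the tensor product is taken over $K[x,f^{-1}]$ and $f^{-k}\in K[x,f^{-1}]$, it then follows that
\[
t^k(\delta\otimes v) = (t^k\delta)\otimes v = (f^{-k}\delta)\otimes v = \delta\otimes f^{-k}v
\]
in $B_{Z|Y}[f^{-1}]\otimes_{K[x,f^{-1}]}M[f^{-1}]$.

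Next I would apply Lemma \ref{lemma:tau}, whose second identity yields $\tau(P)(\delta\otimes w) = \delta\otimes Pw$ for any $w\in M[f^{-1}]$. Taking $w = f^{-k}v$ gives
\[
\tau(P)t^k(\delta\otimes v) = \tau(P)(\delta\otimes f^{-k}v) = \delta\otimes P(f^{-k}v).
\]
Finally, Proposition \ref{prop:BZYbasis} asserts that every element of $B_{Z|Y}[f^{-1}]\otimes_{K[x,f^{-1}]}M[f^{-1}]$ has a unique expression $\sum_{j\geq 0}\delta^{(j)}\otimes v_j$ with $v_j\in M[f^{-1}]$, and $\delta = \delta^{(0)}$ is the $j=0$ basis element. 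Hence $\delta\otimes P(f^{-k}v)=0$ if and only if $P(f^{-k}v)=0$ in $M[f^{-1}]$, which is exactly the stated equivalence.

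I do not anticipate a genuine obstacle here: the content is that the relations $(t-f^{-1})\delta=0$ and $(\partial_{x_i}-f_if^{-2}\partial_t)\delta=0$ from Lemma \ref{lemma:maximal} are precisely what lets $t^k$ and $\tau(P)$ be transported from the $B_{Z|Y}[f^{-1}]$-factor to the $M[f^{-1}]$-factor. The only mild care is that the identity $t\delta=f^{-1}\delta$ needs $B_{Z|Y}[f^{-1}]$ (not $B_{Z|Y}$) because $f^{-1}$ must be available, which is why the proposition is stated after passing through $\iota'$; this is why I would invoke the second, rather than the first, half of Lemma \ref{lemma:tau}.
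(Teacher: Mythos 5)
Your proof is correct and is essentially the same as the paper's, just run in the opposite direction: the paper starts from $\delta\otimes P(f^{-k}v)$, applies Lemma \ref{lemma:tau}, inserts $t^kf^k$ (using $(tf-1)\delta=0$), and arrives at $\tau(P)t^k(\delta\otimes v)$, whereas you start from $\tau(P)t^k(\delta\otimes v)$, use $t\delta=f^{-1}\delta$ (the same relation, phrased via Lemma \ref{lemma:maximal}), and apply Lemma \ref{lemma:tau}; both conclude with Proposition \ref{prop:BZYbasis}.
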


\begin{proof}
Since $t^kf^k\delta = (1+t^kf^k-1)\delta = \delta$, we have
\begin{align*}
\delta\otimes P(f^{-k}v) &=
\tau(P)(\delta\otimes f^{-k}v) =
\tau(P)t^kf^k(\delta\otimes f^{-k}v) = \tau(P)t^k(\delta\otimes v). 
\end{align*}
By Lemma \ref{lemma:tau} and Proposition \ref{prop:BZYbasis}, 
this vanishes if and only if $P(f^{-k}v) = 0$. 
\end{proof}

Summing up we obtain

\begin{theorem}\label{th:J}
Let $M = D_Xu$ be a left $D_X$-module generated by $u$ 
and $I = \Ann_{D_X}u$ the annihilator of $u$ so that 
$M = D_X/I$. 
Let $\iota : M \rightarrow M[f^{-1}]$ be the canonical homomorphism 
which sends $u\in M$ to $u\otimes 1$. 
Let $G$ be a finite set of generators of $I$, 
and $J$ be the left ideal of $D_Y$ generated by 
$\{\tau(P) \mid P \in G\}$ and $tf-1$. Then 
\begin{enumerate}
\item
$J$ coincides with 
the annihilator $\Ann_{D_Y} (\delta\otimes \iota(u))$ of 
$\delta\otimes\iota(u)$ in $B_{Z|Y}[f^{-1}]\otimes_{K[x,f^{-1}]}
M[f^{-1}]$. 
\item
$B_{Z|Y}[f^{-1}]\otimes_{K[x,f^{-1}]}M[f^{-1}]$ 
is generated by $\delta\otimes\iota(u)$ as a left $D_Y$-module. 
\item
As a left $D_Y$-module, $B_{Z|Y}\otimes_{K[x]}M$ is isomorphic to $D_Y/J$. 
\end{enumerate}
\end{theorem}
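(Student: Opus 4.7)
The proof establishes parts (1), (2), and (3) simultaneously by showing that the natural map $\Phi : D_Y/J \rightarrow B_{Z|Y}\otimes_{K[x]}M$ sending the class of $1$ to $\delta^{(0,0)}\otimes u$ is an isomorphism of left $D_Y$-modules; the three assertions then follow after transferring through the canonical isomorphism $B_{Z|Y}\otimes_{K[x]}M \cong B_{Z|Y}[f^{-1}]\otimes_{K[x,f^{-1}]}M[f^{-1}]$ of Propositions~\ref{prop:iso1} and~\ref{prop:iso2} (under which $\delta^{(0,0)}\otimes u$ corresponds to $\delta\otimes\iota(u)$).

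That $\Phi$ is well-defined (equivalently, $J \subseteq \ker\Phi$) is immediate: for each generator $P \in G \subseteq I$ Lemma~\ref{lemma:tau} gives $\tau(P)(\delta^{(0,0)}\otimes u) = \delta^{(0,0)}\otimes Pu = 0$, while the identity $(tf-1)\delta^{(0,0)} = 0$ established earlier yields $(tf-1)(\delta^{(0,0)}\otimes u) = 0$. For surjectivity I would induct on the order of $R \in D_Y$. The key input is that $B_{Z|Y}$ is $D_Y$-generated by $\delta^{(0,0)}$ together with Lemma~\ref{lemma:tau}: writing any $v \in M$ as $v = Qu$ with $Q \in D_X$, one has $\delta^{(0,0)}\otimes v = \tau(Q)(\delta^{(0,0)}\otimes u) \in D_Y(\delta^{(0,0)}\otimes u)$; then $(R\delta^{(0,0)})\otimes v$ is handled by expanding $R(\delta^{(0,0)}\otimes v)$ via the Leibniz rule on $\partial_{x_i}$-factors, whose correction terms have strictly lower order. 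This settles assertion (2) after the transfer.

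The crux is injectivity, which yields (1). The cleanest route is to construct an explicit inverse
\[
\Psi : B_{Z|Y}[f^{-1}]\otimes_{K[x,f^{-1}]}M[f^{-1}] \longrightarrow D_Y/J,
\qquad
\delta^{(j)} \otimes (f^{-k}Pu) \longmapsto \frac{(-1)^j}{j!}\partial_t^j t^k \tau(P) + J,
\]
guided by the unique normal form of Proposition~\ref{prop:BZYbasis}. The principal obstacle is well-definedness: if $f^{-k_1}P_1 u = f^{-k_2}P_2 u$ in $M[f^{-1}]$ then, after normalizing $k_1 = k_2 = k$, there exists $m \in \N$ with $f^m(P_1-P_2) u = 0$ in $M$, so $f^m(P_1-P_2)\in I$ and $f^m\tau(P_1-P_2) \in J$. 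The identity $1 - (tf)^m = -(tf-1)\sum_{i=0}^{m-1}(tf)^i \in J$ then promotes this to $\tau(P_1-P_2) = (tf)^m \tau(P_1-P_2) + (1 - (tf)^m)\tau(P_1-P_2) \in J$, confirming well-definedness. The $D_Y$-equivariance of $\Psi$ is a mechanical check against the action formulae $\partial_t(\delta^{(j)}\otimes v) = -(j+1)\delta^{(j+1)}\otimes v$ and $t(\delta^{(j)}\otimes v) = \delta^{(j)}\otimes f^{-1}v + \delta^{(j-1)}\otimes v$ (derived from $\partial_t\delta = -\delta^{(1)}$ and $t\delta = f^{-1}\delta$), together with the corresponding ones for $\partial_{x_i}$ using $\partial_{x_i}\tau(P) = \tau(\partial_{x_i}P) + f_i\partial_t t^2 \tau(P)$. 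Once this is in place, $\Psi\circ\Phi = \mathrm{id}_{D_Y/J}$ follows by evaluating on the generator $1+J$, giving injectivity of $\Phi$. Assertion (3) is then the isomorphism $D_Y/J \cong B_{Z|Y}\otimes_{K[x]}M$.
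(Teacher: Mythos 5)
Your overall strategy---exhibiting $\Phi : D_Y/J \to B_{Z|Y}\otimes_{K[x]}M$ as an isomorphism by constructing an explicit inverse $\Psi$, then transferring through Propositions~\ref{prop:iso1}--\ref{prop:iso2}---is a reasonable repackaging of the paper's argument, which instead proves (1) directly by analyzing $\Ann_{D_Y}(\delta\otimes\iota(u))$ in $D_Y[f^{-1}]$ and deduces (3) from (1) and (2). The well-definedness of $\Phi$, the surjectivity via $\tau$, and the $D_Y$-equivariance calculations for $\Psi$ are all in order. However, there is a genuine gap at the crux of the well-definedness of $\Psi$.

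You assert that from $f^m(P_1-P_2)\in I$, hence $(tf)^m\tau(P_1-P_2)\in J$, one can conclude
\[
\tau(P_1-P_2) = (tf)^m\tau(P_1-P_2) + \bigl(1-(tf)^m\bigr)\tau(P_1-P_2) \in J
\]
on the grounds that $1-(tf)^m = -(tf-1)\sum_{i=0}^{m-1}(tf)^i$ lies in $J$. This last identity indeed shows $1-(tf)^m\in D_Y(tf-1)\subseteq J$ (using that $t$ and $f$ commute), but $J$ is only a \emph{left} ideal of $D_Y$, so the fact that $1-(tf)^m\in J$ does not imply $\bigl(1-(tf)^m\bigr)\tau(P_1-P_2)\in J$: you are right-multiplying an element of $J$ by $\tau(P_1-P_2)$, and $tf-1$ does not commute with the $\partial$'s appearing in $\tau(P_1-P_2)$. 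The same issue silently recurs when you normalize $k_1=k_2$, since that step needs $t^{k}(tf-1)\tau(P)\in J$.

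The assertion you need is nonetheless true, but it requires an argument. Two routes are available. One is the paper's: for any polynomial $g\in K[x,t]$ and any $Q\in D_Y$ one has $(1-g)^kQ\in D_Y(1-g)$ once $k$ exceeds the order of $Q$ (iterated commutators $[g,\cdot]$ drop the order, so $(1-g)^kQ\equiv 0\pmod{D_Y(1-g)}$ eventually); combining $(tf)^mQ\in J$ with $(1-tf)^kQ\in J$ for $k\gg0$ and a binomial expansion then yields $Q\in J$. The other route, tailored to your $\Psi$, is to observe the specific commutation identity $[\vartheta_i, tf] = -f_it(tf-1)$, which gives $(tf-1)\vartheta_i = (\vartheta_i+f_it)(tf-1)$; since the operators $\vartheta_i+f_it$ commute among themselves and with the $x_j$, one obtains by induction that $(tf-1)\tau(P)\in D_Y(tf-1)$ for every $P\in D_X$, whence $(tf)^m\tau(P)\in J$ forces $\tau(P)\in J$ by descending induction on the power of $tf$. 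Either supplement closes the gap; as written, the proposal treats a left-ideal membership as if it were stable under right multiplication, which it is not.
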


\begin{proof}
(1) 
It is obvious that $J$ is contained in 
$\Ann_{D_Y} (\delta\otimes\iota(u))$. 
Suppose $P(\delta\otimes \iota(u)) = 0$ with $P \in D_Y$. 
There exist $R \in D_Y[f^{-1}]$ and 
$a_{\alpha,j}(x) \in K[x,f^{-1}]$ which are zero  
except finitely many $(\alpha,j) \in \N^n\times\N$ such that
\begin{align*}
P &= \sum_{\alpha \in \N^n, j\geq 0} a_{\alpha,j}(x)\partial_t^j
(\partial_{x_1} - f_1f^{-2}\partial_t)^{\alpha_1}\cdots
(\partial_{x_n} - f_nf^{-2}\partial_t)^{\alpha_n}
+ R(t - f^{-1})
\\&
= \sum_{j\geq 0} \partial_t^j\tau(Q_j) + R(t-f^{-1})
\end{align*}
with $Q_j := \sum_{\alpha \in \N^n} a_{\alpha,j}(x) \partial_x^\alpha 
\in D_X[f^{-1}]$. 
Then we have
\[
0 = P(\delta\otimes \iota(u)) 
= \sum_{j\geq 0}\partial_t^j\tau(Q_j)(\delta\otimes \iota(u)) 
= \sum_{j\geq 0}\delta^{(j)}\otimes Q_j\iota(u)
\]
and consequently $Q_j\iota(u) = 0$ for each $j\geq 0$ by Proposition \ref{prop:BZYbasis}.  
This implies that $f^lQ_ju = 0$ holds in $M$, 
that is, $f^lQ_j$ belongs to $I$,  
for some $l \in\N$ independent of $j$. 
We may also assume that $f^lR$ belongs to $D_Yf$. 
Hence $f^lP = \sum_{j\geq 0}\partial_t^j\tau(f^lQ_j) + f^lR(t-f^{-1})$ belongs to $J$. 
Since $(1- t^lf^l)^kP$ belongs to $D_Y(1-t^lf^l)$, and hence to $J$, 
if we take $k\in\N$ sufficiently large, 
and $t^lf^lP$ belongs to $J$, 
we conclude that $P$ itself belongs to $J$. 

(2) 
By the assumption, Lemma \ref{lemma:expression},  and Proposition \ref{prop:BZYbasis}, 
an arbitrary element of 
$B_{Z|Y}[f^{-1}]\otimes_{K[x,f^{-1}]}M[f^{-1}]$ is expressed as a finite sum
\[
\sum_{j\geq 0} \delta^{(j)}\otimes P_j(u\otimes f^{-k})
\]
with $P_j \in D_X$ and $k \in \N$. We get 
\begin{align*}
\sum_{j\geq 0} \delta^{(j)}\otimes P_j(u\otimes f^{-k}) 
&= \sum_{j\geq 0} \partial_t^j(\delta\otimes P_j(u\otimes f^{-k})) 
= \sum_{j\geq 0} \partial_t^j\tau(P_j)(\delta\otimes(u\otimes f^{-k})) 
\\&=
\sum_{j\geq 0} \partial_t^j\tau(P_j)t^kf^k(\delta\otimes(u\otimes f^{-k}))
= \sum_{j\geq 0} \partial_t^j\tau(P_j)t^k(\delta\otimes\iota(u)). 
\end{align*}
This completes the proof of (2). 

(3) follows from (1), (2) and Proposition  \ref{prop:iso2}. 
\end{proof}

\begin{definition}\rm
For the sake of simplicity of the notation, let us set
\[
 \tilde M := B_{Z|Y}[f^{-1}]\otimes_{K[x,f^{-1}]}M[f^{-1}]
\]
and define a homomorphism $\varphi : M[f^{-1}]\rightarrow 
\tilde M/\partial_t\tilde M$ by 
\[
\varphi(v) = \delta \otimes v \mod \partial_t\tilde M. 
\]
for $v \in M[f^{-1}]$. 
\end{definition}

\begin{theorem}
The homomorphism $\varphi : M[f^{-1}] \rightarrow \tilde M/\partial_t \tilde M$
is an isomorphism of left $D_X[f^{-1}]$-modules, and consequently of $D_X$-modules. 
\end{theorem}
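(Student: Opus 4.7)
The plan is to pass to the explicit basis of $\tilde M$ provided by Proposition \ref{prop:BZYbasis}: every element has a unique expression $\sum_{j\geq 0}\delta^{(j)}\otimes v_j$ with $v_j \in M[f^{-1}]$, giving an identification $\tilde M = \bigoplus_{j\geq 0}\delta^{(j)}\otimes M[f^{-1}]$ of $K[x,f^{-1}]$-modules. Since $M[f^{-1}]$ carries no $t$-action, $\partial_t$ operates only on the first tensor factor, and combined with $\partial_t\delta^{(j)} = -(j+1)\delta^{(j+1)}$ this yields
\[
\partial_t\tilde M = \bigoplus_{j\geq 1}\delta^{(j)}\otimes M[f^{-1}],
\]
the inclusion $\subset$ being immediate and the reverse following because each coefficient $-(j+1)$ is invertible in $K$. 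Consequently, the projection onto the $j=0$ summand gives a $K$-linear bijection $\tilde M/\partial_t\tilde M \overset{\sim}{\to} \delta\otimes M[f^{-1}] \cong M[f^{-1}]$ whose inverse is exactly $\varphi$; in particular $\varphi$ is bijective and manifestly $K[x,f^{-1}]$-linear.

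For $D_X$-equivariance, compatibility with the generators $x_j$ is automatic since $x_j\in K[x,f^{-1}]$ moves across the tensor. For the partial derivatives, the Leibniz rule in the tensor product gives
\[
\delta\otimes\partial_{x_i}v - \partial_{x_i}(\delta\otimes v) = -(\partial_{x_i}\delta)\otimes v,
\]
and by Lemma \ref{lemma:maximal}, $\partial_{x_i}\delta = f_if^{-2}\partial_t\delta$ in $B_{Z|Y}[f^{-1}]$. Since $f_if^{-2}\in K[x,f^{-1}]$, we can move it across the tensor, obtaining
\[
-(\partial_t\delta)\otimes f_if^{-2}v = -\partial_t(\delta\otimes f_if^{-2}v) \in \partial_t\tilde M.
\]
Reducing modulo $\partial_t\tilde M$ yields $\varphi(\partial_{x_i}v) = \partial_{x_i}\varphi(v)$. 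Thus $\varphi$ is $D_X$-linear, and hence $D_X[f^{-1}]$-linear since $f$ is already invertible on both modules.

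No substantial obstacle is expected: the proof rests on (i) the unique-basis statement of Proposition \ref{prop:BZYbasis}, which reduces $\tilde M/\partial_t\tilde M$ to the $j=0$ component, and (ii) the annihilator relation in Lemma \ref{lemma:maximal}, which is precisely the identity that forces the $\partial_{x_i}$-discrepancy $-(\partial_{x_i}\delta)\otimes v$ to lie in $\partial_t\tilde M$. The only care required is to exploit the inertness of $M[f^{-1}]$ under $\partial_t$ in order to read off the $\partial_t$-image inside $\tilde M$ cleanly from the basis decomposition.
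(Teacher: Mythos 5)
Your proof is correct and follows essentially the same route as the paper: both use Proposition \ref{prop:BZYbasis} to identify $\tilde M/\partial_t\tilde M$ with the $j=0$ component $\delta\otimes M[f^{-1}]$, and both invoke the relation $(\partial_{x_i}-f_if^{-2}\partial_t)\delta=0$ from Lemma \ref{lemma:maximal} to push the discrepancy $-(\partial_{x_i}\delta)\otimes v$ into $\partial_t\tilde M$. The only difference is that you spell out the $\partial_{x_i}$-compatibility computation, which the paper compresses into the single line $P(\delta\otimes v)\equiv\delta\otimes Pv\pmod{\partial_t\tilde M}$.
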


\begin{proof}
By Proposition \ref{prop:BZYbasis}
one has 
\begin{align*}
\tilde M &= (\delta\otimes M[f^{-1}]) \oplus (\delta^{(1)}\otimes M[f^{-1}])
 \oplus (\delta^{(2)}\otimes M[f^{-1}]) \oplus \cdots, 
\\
\partial_t\tilde M &= (\delta^{(1)}\otimes M[f^{-1}]) 
\oplus (\delta^{(2)}\otimes M[f^{-1}]) \oplus \cdots
\end{align*}
as $K[x,f^{-1}]$-modules. 
Hence $\varphi$ is an isomorphism of $K[x,f^{-1}]$-modules. 
For $v \in M[f^{-1}]$ and $P \in D_X[f^{-1}]$, one has
\[
P(\delta\otimes v) \equiv \delta\otimes Pv \mod \partial_t\tilde M.
\]
Hence $\varphi$ is a homomorphism of left $D_X[f^{-1}]$-modules. 
\end{proof}

\begin{theorem}\label{th:Jholonomic}
If $M$ is holonomic on $X_f$, i.e., if  $\Char(M) \cap \pi^{-1}(X_f)$ 
is an $n$-dimensional algebraic set, then $D_Y/J$ is a 
holonomic $D_Y$-module. 
\end{theorem}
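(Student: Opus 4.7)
The plan is to establish $\dim \Char(D_Y/J) \leq n+1$ by working directly with $\mathrm{gr}(J)$ and to combine this with Bernstein's inequality for the reverse bound. A first observation I would use is that every element $Q$ of $I := \Ann_{D_X} u$ (not merely a fixed set of generators) contributes to $J$ via $\tau$: by Lemma \ref{lemma:tau} and Theorem \ref{th:J}(1),
\[
\tau(Q)(\delta \otimes \iota(u)) = \delta \otimes Q\iota(u) = \delta \otimes \iota(Qu) = 0,
\]
so $\tau(Q) \in \Ann_{D_Y}(\delta \otimes \iota(u)) = J$. Combined with $tf - 1 \in J$, this yields
\[
\Char(D_Y/J) \subseteq V(tf-1) \cap \bigcap_{Q \in I} V(\sigma(\tau(Q))).
\]

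Next I would compute principal symbols. Since $\vartheta_i = \partial_{x_i} - f_i \partial_t t^2$ has order $1$ with principal symbol $\xi_i - f_i(x) t^2 \tau$, and the $\vartheta_i$ commute pairwise, a routine induction on $\mathrm{ord}(Q)$ shows that for $Q \in D_X$ of order $m$ the principal symbol of $\tau(Q)$ is $\sigma_m(Q)(x,\, \xi_1 - f_1(x) t^2 \tau, \ldots, \xi_n - f_n(x) t^2 \tau)$. Restricted to the zero set of $tf-1$, where $t = f^{-1}$, this becomes $\sigma_m(Q)(x,\, \xi_1 - f_1 f^{-2}\tau, \ldots, \xi_n - f_n f^{-2}\tau)$. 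The linear-in-$\tau$ change of cotangent variables $\eta_i := \xi_i - f_i f^{-2} \tau$ then yields an algebraic isomorphism
\[
\Psi : X_f \times K^n \times K \xrightarrow{\sim} \{(x,t,\xi,\tau) \in T^*Y : tf = 1\},\quad
\Psi(x,\eta,\tau) = \bigl(x,\, f(x)^{-1},\, \eta + f(x)^{-2}\tau(f_1,\ldots,f_n),\, \tau\bigr),
\]
under which the vanishing conditions for $\tau(Q)$ with $Q \in I$ translate to $\sigma(Q)(x, \eta) = 0$ for every $Q \in I$.

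Finally, by the definition of the characteristic variety, $\{(x,\eta) : \sigma(Q)(x,\eta) = 0\ \forall Q \in I\} = \Char(M)$, so $\Psi^{-1}(\Char(D_Y/J)) \subseteq (\Char(M) \cap T^*X_f) \times K$. The hypothesis $\dim(\Char(M) \cap \pi^{-1}(X_f)) = n$ then yields $\dim \Char(D_Y/J) \leq n+1$, and Bernstein's inequality supplies the reverse bound, making $D_Y/J$ holonomic. The one step meriting care is the principal-symbol computation: that the symbol of $\tau(Q)$ is obtained by \emph{literal} substitution $\xi_i \mapsto \xi_i - f_i t^2 \tau$ relies on the $\vartheta_i$ commuting pairwise, with commutator corrections from $[\vartheta_i, x_j] = \delta_{ij}$ affecting only strictly lower-order symbols; this is the main technical point in the plan, the remainder being straightforward dimension-counting together with the crucial fact that we have access to $\tau(Q)$ for \emph{all} $Q \in I$, not merely for a prescribed generating set $G$.
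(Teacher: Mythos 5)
Your proposal is essentially the same argument as the paper's: compute the principal symbol of $\tau(Q)$ for $Q\in I$ as the literal substitution $\xi_i\mapsto\xi_i-f_i t^2\tau$ into $\sigma(Q)$, restrict to $tf=1$, identify the resulting locus with $(\Char(M)\cap\pi^{-1}(X_f))\times K$, and read off the dimension bound. The one place you are slightly more careful than the paper is that you establish only the inclusion $\Char(D_Y/J)\subseteq V(tf-1)\cap\bigcap_{Q\in I}V(\sigma(\tau(Q)))$ (justified via Theorem \ref{th:J}(1), so that $\tau(Q)\in J$ for \emph{all} $Q\in I$, not merely a fixed generating set) and then close with Bernstein's inequality for the reverse bound, whereas the paper asserts the equality $\Char(D_Y/J)=\{\dots\}$ outright and concludes $\dim=n+1$ directly from the bijection; your version avoids having to prove that $\gr(J)$ is generated exactly by those symbols.
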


\begin{proof}
We may assume $M = D_X/I$. 
By the definition, we have
\begin{align*}
\Char(D_Y/J) &= 
\{(x,t;\xi,\tau) \in K^{2n+2} \mid 
\sigma(P)(x,\xi_1-f_1t^2\tau,\dots,\xi_n-f_nt^2\tau) = 0 
\quad (\forall P \in I),
\\&\quad
tf(x) = 1\}
\\&=
\{(x,t;\xi,\tau)\mid 
(x,\xi_1-f_1t^2\tau,\dots,\xi_n-f_nt^2\tau) \in \Char(M), 
\,\,tf(x) = 1\}.
\end{align*}
Hence there is a bijection
\[
(\Char(M)\cap\pi^{-1}(X_f)) \times K \ni (x,\xi,\tau) 
\longmapsto (x,1/f(x);\xi_1-f_1t^2\tau,\dots,\xi_n-f_nt^2\tau) 
\in \Char(D_Y/J).
\]
This implies that  $\Char(D_Y/J)$ is of dimension $n+1$.
\end{proof}

The $D_X$-module 
$\tilde M/\partial_t \tilde M$ is nothing but the integration of 
the $D_Y$-module $\tilde M$ with respect to $t$, 
and $\tilde M$ is isomorphic to $D_Y/J$ by Theorem \ref{th:J}.
Suppose that $M$ is holonomic on $X_f$. 
Then $\tilde M = D_Y/J$ is a holonomic $D_Y$-module by the theorem above.  
Hence $\tilde M/\partial_t\tilde M$ is also 
a holonomic $D_X$-module. 
In particular, there exists $k_0\in \N$, or else $k_0 = -1$, 
 such that $\tilde M/\partial_t\tilde M$ 
is generated by residue classes 
$[t^j \delta\otimes \iota(u)] = \varphi(u\otimes f^{-j})$  
for $0 \leq j \leq k_0$. 

The $D_X$-module $\tilde M/\partial_t\tilde M$ 
 can be computed by the integration algorithm for $D$-modules 
under the assumption that what is called the $b$-function with respect 
to the weight vector $(0,...,0,1;0,...,0,-1)$ for 
$(x_1,\dots,x_n,t;\partial_{x_1},\dots,\partial_{x_n},\partial_t)$ exists. 
See \cite{OTdeRham}, \cite{SST}, \cite{OakuLecture}. 
This condition is fulfilled if $B_{Z|Y}\otimes_{K[x]}M = D_Y/J$ is holonomic, 
which is the case if $M$ is holonomic on $X_f$.  
Thus we have obtained an algorithm to compute a presentation of $M[f^{-1}]$ 
as left $D_X$-module together with the localization homomorphism $\iota$;  
this algorithm works at least if $M$ is holonomic on $X_f$. 

\begin{definition}\rm
A left $D_X$-module $M$ is said to be {\em $f$-saturated} or 
{\em $f$-torsion free} if the homomorphism $t : M \rightarrow M$ 
is injective. This is equivalent to $H^0_{(f)}(M) = 0$. 
\end{definition}

Let $K$ be algebraically closed and $M$ be a holonomic $D_X$-module. 
Then $\pi^{-1}(\{f=0\})$ contains an irreducible component of $\Char(M)$.  
See e.g.\ Tsai \cite{Tsai} for related topics such as associated primes 
and the Weyl closure of $M$. 
The algorithm above also provides us with a new algorithm to compute 
$H^j_{(f)}(M)$ for $j=0,1$ 
by syzygy computation by means of appropriate Gr\"obner bases of  
a submodule of a free module over $D_X$. 
See the example below for details. 

In what follows, we freely use the notation and the terminology 
introduced in Chapters 1, 2, 4 of \cite{OakuLecture} 
concerning weight vectors, Gr\"obner bases, and $b$-functions. 

\begin{example}\rm
Set $n=1$ and write $x = x_1$. 
Set $\Lsc = D_Xx^s = D_X/D_X(x\partial_x - s)$ with $f = x$. 
Since the $b$-function of $x$ is $s+1$, $\Lsc(\lambda)$ is isomorphic to 
$D_Xx^\lambda$ and hence $x$-saturated if $\lambda \not\in\N$. 
So let us consider $M = \Lsc(0) = D_X/D_X x\partial_x$. 
($M$ is the $D$-module for the Heaviside function $Y(x)$.)
Let $u$ be the residue class of $1$ in $M$. 
The left ideal $J$ of $D_Y$ defined in Theorem \ref{th:J} 
is generated by $tx-1$ and $\partial_x - \partial_tt^2$. 
A Gr\"obner basis of $J$ with respect to a monomial order 
adapted to the weight vector $(1,0;-1,0)$ for $(t,x,\partial_t,\partial_x)$ 
is 
\[
tx-1, \quad x^2\partial_x - \partial_t, \quad
t\partial_t - x\partial_x + 1 = \partial_tt - x\partial_x, \quad 
t^2\partial_t - \partial_x + 2t = \partial_tt^2 - \partial_x.
\]
The $b$-function of $J$ with respect to the weight vector above 
(see Theorem 4.4 of \cite{OakuLecture}) is $s(s+1)$. 
Hence the integration module 
 $\tilde M/\partial_t\tilde M = D_Y/(J + \partial_tD_Y)$ 
is generated by the residue classes $[u]$ and $[tu]$, which correspond 
to $u\otimes 1$ and $u\otimes x^{-1}$ in $M[x^{-1}]$ respectively, 
over $D_X$. 
The fundamental relations among the generators can be 
read off from the Gr\"obner basis above as follows: 
\begin{align*}&
x[tu] - [u] = 0,\quad 
\partial_x[u] = 0, \quad x^2\partial_x[tu] + [u] = 0,
\quad
(x\partial_x+1)[tu] = 0. 
\end{align*}
We translate these relations to those among elements 
$(-1,x)$, $(\partial_x,0)$, $(1,x^2\partial_x)$, $(0,x\partial_x+1)$ of the free module 
$D_X^2$. 
Let $N$ be the left $D_X$-submodule of $D_X^2$ generated by these vectors. 
By using Gr\"obner bases of $N$ with respect to `position-over-term' orders 
(see e.g., Chapter 5 of \cite{CLO2}),  
we can confirm that 
$\Ann_{D_X}[u] = D_X\partial_x$ and $\Ann_{D_X}[tu] = D_X(x\partial_x+1)$.  
Hence $M[x^{-1}]$ is generated by $u\otimes x^{-1}$ and isomorphic to 
$D_X/D_X(x\partial_x+1) \cong K[x,x^{-1}]$ 
with the correspondence $u\otimes x^{-1} \leftrightarrow \overline 1$. 
The image $\iota(M)$ of $\iota : M \rightarrow M[f^{-1}]$ 
is isomorphic to $D_X/D_X\partial_x \cong K[x]$ with the correspondence 
$u\otimes 1 \leftrightarrow \overline 1$.  
Finally we get
\begin{align*}
H^0_{(x)}(M) &= \ker\iota = K[\partial_x]u \cong D_X/D_X x, 
\\
H^1_{(x)}(M) &= D_X[tu]/D_X[u] \cong D_X/D_X x. 
\end{align*}
\end{example}

The following is an example of non-holonomic $M$: 

\begin{example}\rm
Set $n=2$, $x_1 = x$, $x_2=y$, $P = x\partial_x^2+\partial_y$, and  
$M = D_X/D_XP = D_Xu$ as in Example \ref{ex:P}. 
Then $M$ is $x$- and $y$-saturated. 
The localizations of $M$ are 
\begin{align*}
M[x^{-1}] &= D_X(u\otimes x^{-2}) = D_X/D_X(x^2\dx^2+4x\dx+x\dy+2),
\\
M[y^{-1}] &= D_X(u\otimes y^{-1}) = D_X/D_X(xy\dx^2+y\dy+1). 
\end{align*}
The first local cohomology groups are
\begin{align*}
H^1_{(x)}(M) &= D_X[u\otimes x^{-2}] = D_X/(D_Xx^2 + D_Xx\partial_y),
\\
H^1_{(y)}(M) &= D_X[u\otimes y^{-1}] = D_X/D_Xy,
\end{align*}
both of which are not holonomic. 
\end{example}

\section{Algorithm for $M(u,f,s)$ and the $b$-function}

The purpose here is to give algorithms to compute 
$M(u,f,s)$, $M(u,f,\lambda)$, $M\otimes_{K[x]}K[x,f^{-1}]f^\lambda$, 
and the $b$-function $b_{u,f}(s)$ for an arbitrary 
$D_X$-module $M = D_Xu$ that is holonomic on $X_f$, and an arbitrary non-constant 
polynomial $f$. Algorithms for these objects were already given 
in \cite{OakuAdvance} under the additional assumption that $M$ is $f$-saturated. 
We remove this assumption by using the localization algorithm. 

Set $X = K^n$ and $Y = K^{n+1}$ with coordinates 
$x = (x_1,\dots,x_n)$ of $X$ and $(x,t)$ of $Y$. 
Let $f = f(x) \in K[x]$ be a non-constant polynomial and 
let $Z$ be the affine subset $Z = \{(x,t) \mid t = f(x)\}$ of $Y$.
(Note that $Z$ is different from what was defined in the previous section.) 
We regard the local cohomology group 
\[
B_{Z|Y} := H^1_{Z}(K[x,t]) = K[x,t,(t-f)^{-1}]/K[x,t]
\]
as a left $D_Y$-module. For $k\in\N$, let  
\[
\delta^{(k)}(t-f(x)) = \left[ \frac{(-1)^k k!}{(t-f(x))^{k+1}} \right]
\]
be the residue class in $B_{Z|X}$ and denote 
$\delta(t-f) = \delta^{(0)}(t-f)$.  
Then $\delta(t-f)$ satisfies a holonomic system
\[
(t-f)\delta(t-f) = (\partial_{x_i} + f_i\partial_t)\delta(t-f) = 0
\quad (1 \leq i \leq n)
\] 
with $f_i = \partial f/\partial x_i$. 
Hence there exists an isomorphism
\[
B_{Z|Y} \cong
D_Y/(D_Y(t-f) + D_Y(\partial_{x_1} + f_1\partial_t) + \cdots
+ D_Y(\partial_{x_n} + f_n\partial_t)) 
\]
as left $D_Y$-modules. 
In particular, $\delta^{(k)}(t-f)$ ($k\in\N$) constitute 
a free basis of $B_{Z|Y}$ over $K[x]$. 

Following B.\ Malgrange, let us give $\Lsc = D_X[s]f^s$ 
a structure of left $D_Y$-module by
\[
t(a(x,s)f^{-k}f^s) = a(x,s+1)f^{-k+1}f^s, 
\qquad 
\partial_t(a(x,s)f^{-k}f^s) = -sa(x,s-1)f^{-k-1}f^s
\]
for $a(x,s) \in K[x,s]$ and $k \in \N$. 
The actions of $t$ and $\partial_t$ on $\Lsc$ satisfy 
$[\partial_t,t] = 1$, and they commute with $x_i$, $\partial_{x_i}$. 
Hence the definition above extends to the action of $D_Y$ on $\Lsc$. 
In particular, $\partial_t t f^s = -sf^s$ holds, which will 
play an important role in what follows. 

With respect to this action, $B_{Z|Y}$ can be regarded as a left 
$D_Y$-submodule of $\Lsc$ by 
identifying $\delta^{(k)}(t-f)$ with 
$(-1)^ks(s-1)\cdots(s-k+1)f^{s-k}$ in $\Lsc$. 
In fact, we have 
\begin{align*}&
t\delta^{(k)}(t-f) = f\delta^{(k)}(t-f) - k\delta^{(k-1)}(t-f), 
\\& 
t((-1)^ks(s-1)\cdots(s-k+1)f^{s-k}) 
= (-1)^k(s+1)s(s-1)\cdots(s-k+2)f^{s-k+1}
\\&=
f(-1)^ks(s-1)\cdots(s-k+1)f^{s-k} 
- (-1)^{k-1} ks(s-1)\cdots(s-k+2)f^{s-k+1}
\end{align*}
since $(t-f)\delta^{(k)}(t-f) = -k\delta^{(k-1)}(t-f)$, and 
\begin{align*}&
\partial_t\delta^{(k)}(t-f) = \delta^{(k+1)}(t-f),
\\&
\partial_t((-1)^ks(s-1)\cdots(s-k+1)f^{s-k}) 
= (-1)^{k+1}s(s-1)\cdots(s-k)f^{s-k-1}. 
\end{align*}

For a left $D_X$-module $M = D_Xu$, consider the tensor product 
$M\otimes_{K[x]}B_{Z|Y}$, which is a left $D_Y$-module. 

\begin{definition}\rm
Set $\vartheta'_i := \partial_{x_i} + f_i\partial_t$ 
for $i=1,\dots,n$.  
Define a ring homomorphism $\tau'$ from $D_X$ to $D_Y$ by
\[
\tau' : D_X \ni P(x,\partial_x) \longmapsto 
P(x,\vartheta'_1,\dots,\vartheta'_n)
\in D_Y. 
\]
Since $\vartheta'_1,\dots,\vartheta'_n$ commute with each other, 
and $[\vartheta'_i,x_j] = \delta_{ij}$, this substitution is a well-defined 
ring homomorphism. 
\end{definition}

Since $\vartheta'_i\delta(t-f) = 0$ for $1 \leq i \leq n$, 
\[
 \tau'(P)(v \otimes \delta(t-f)) = Pv \otimes \delta(t-f)
\]
holds for any $v \in M$ and $P \in D_X$. Hence we have
\[
Pu \otimes \partial_t^k\delta(t-f) 
= \partial_t^k(Pu \otimes \delta(t-f))
= \partial_t^k\tau'(P)(u \otimes \delta(t-f)). 
\]
This proves

\begin{lemma}
If $M = D_Xu$, then 
the left $D_Y$-module $M\otimes_{K[x]} B_{Z|Y}$ 
is generated by $u\otimes \delta(t-f)$. 
\end{lemma}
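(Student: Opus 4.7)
The plan is to reduce the claim to showing that every element of the form $Pu \otimes \delta^{(k)}(t-f)$, with $P \in D_X$ and $k \in \N$, lies in the left $D_Y$-submodule generated by $u \otimes \delta(t-f)$. Since $B_{Z|Y}$ is a free $K[x]$-module on the basis $\{\delta^{(k)}(t-f)\}_{k\in\N}$, the tensor product $M \otimes_{K[x]} B_{Z|Y}$ is spanned as a $K$-vector space by elements $v \otimes \delta^{(k)}(t-f)$ with $v \in M$ and $k \in \N$; using $M = D_X u$ we may take $v = Pu$.

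The key identity is already visible just above the lemma: because $\vartheta'_i \delta(t-f) = (\partial_{x_i} + f_i \partial_t)\delta(t-f) = 0$ for each $i$, a straightforward induction on the order of $P$ (or on the total degree in the $\vartheta'_i$) yields
\[
 \tau'(P)(u \otimes \delta(t-f)) = Pu \otimes \delta(t-f)
\]
for every $P \in D_X$. Combined with $\delta^{(k)}(t-f) = \partial_t^k \delta(t-f)$ and the Leibniz rule for $\partial_t$ (which acts only on the second factor, since $t$ and $\partial_t$ commute with $K[x]$), this gives
\[
Pu \otimes \delta^{(k)}(t-f) = \partial_t^k(Pu \otimes \delta(t-f)) = \partial_t^k \tau'(P)(u \otimes \delta(t-f)),
\]
which manifestly lies in $D_Y \cdot (u \otimes \delta(t-f))$.

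The only step that requires any thought is the intertwining identity $\tau'(P)(u\otimes \delta(t-f)) = Pu \otimes \delta(t-f)$. It holds on generators $P = \partial_{x_i}$ because
\[
\vartheta'_i(u \otimes \delta(t-f)) = (\partial_{x_i}u) \otimes \delta(t-f) + u \otimes \vartheta'_i \delta(t-f) = (\partial_{x_i}u) \otimes \delta(t-f),
\]
and it extends multiplicatively since $\tau'$ is a well-defined ring homomorphism (as noted in the definition, because $[\vartheta'_i,x_j] = \delta_{ij}$ and the $\vartheta'_i$ commute). Once this identity is in hand, the rest of the argument is bookkeeping; there is no real obstacle, the lemma is essentially an immediate corollary of the construction of $\tau'$ and the free-basis description of $B_{Z|Y}$.
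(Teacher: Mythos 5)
Your argument is correct and is essentially identical to the paper's: the paper also derives the intertwining identity $\tau'(P)(v\otimes\delta(t-f)) = Pv\otimes\delta(t-f)$ from $\vartheta'_i\delta(t-f)=0$ and then applies $\partial_t^k$, using the free $K[x]$-basis $\{\delta^{(k)}(t-f)\}$ of $B_{Z|Y}$. No discrepancies.
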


\begin{theorem}\label{th:J'}
Let $M = D_Xu$ be a left $D_X$-module generated by $u$ and $f \in K[x]$.
Let $G$ be a finite set of generators of $I := \Ann_{D_X}u$ and 
let $J$ be the left ideal of $D_Y$ generated by 
$\{\tau'(P) \mid P \in G\} \cup \{t-f\}$. Then $J$ coincides with 
the annihilator $\Ann_{D_Y} (u\otimes \delta(t-f))$.
Moreover, if $M$ is a holonomic $D_X$-module, 
then $M \otimes_{K[x]}B_{Z|Y}$ is a holonomic $D_Y$-module.  
\end{theorem}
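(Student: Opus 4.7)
The plan has two parts. For the identity $J = \Ann_{D_Y}(u\otimes\delta(t-f))$, the inclusion $J \subseteq \Ann_{D_Y}(u\otimes\delta(t-f))$ is immediate from $(t-f)\delta(t-f)=0$ and the identity $\tau'(P)(u\otimes\delta(t-f)) = (Pu)\otimes\delta(t-f)$ observed just before the theorem, which vanishes for $P \in I := \Ann_{D_X}u$. The substantive work is the reverse inclusion.

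The key structural fact I would exploit is that the subring $\tau'(D_X)$ centralizes both $\tilde t := t-f$ and $\partial_t$ inside $D_Y$: one checks $[\vartheta'_i,\tilde t] = -f_i + f_i = 0$ and $[\vartheta'_i,\partial_t]=0$, while $x_i$ trivially commutes with both. Since $\tau'(D_X)$ together with $K\langle \tilde t,\partial_t\rangle$ generates $D_Y$ (as $t = \tilde t + f$ and $\partial_{x_i} = \vartheta'_i - f_i\partial_t$), a PBW-type argument---most cleanly, by noting that the coordinate change $\tilde t = t - f$ is a $K$-algebra automorphism of $D_Y$ sending $\partial_{\tilde x_i}$ to $\vartheta'_i$ and $\partial_{\tilde t}$ to $\partial_t$---gives the tensor product decomposition
\[
D_Y \cong \tau'(D_X)\otimes_K K\langle \tilde t,\partial_t\rangle
\]
as $K$-algebras. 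In particular, every $Q \in D_Y$ has a unique expression $\sum_{i,j\geq 0}\tau'(P_{ij})\tilde t^i\partial_t^j$ with $P_{ij} \in D_X$, so modulo $D_Y\tilde t = D_Y(t-f)$ one may write $Q \equiv \sum_{k\geq 0}\tau'(P_k)\partial_t^k$ with $P_k \in D_X$ almost all zero. Applying this to $u\otimes\delta(t-f)$ and using that $\vartheta'_i$ annihilates every $\delta^{(k)}(t-f)$ (so $\vartheta'_i$ acts as $\partial_{x_i}$ on the $M$-factor) and that $\partial_t^k\delta(t-f) = \delta^{(k)}(t-f)$, one obtains
\[
Q(u\otimes\delta(t-f)) = \sum_{k\geq 0}(P_ku)\otimes\delta^{(k)}(t-f).
\]
Since $\{\delta^{(k)}(t-f)\}_{k\geq 0}$ is a $K[x]$-basis of $B_{Z|Y}$, the vanishing of this sum forces $P_ku=0$ for each $k$, hence $P_k \in I$ and $\tau'(P_k)\in J$, so $Q \in J$.

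For the holonomicity assertion, the same tensor decomposition identifies $M\otimes_{K[x]}B_{Z|Y}$ with $M\otimes_K(A_1/A_1\tilde t)$, where $A_1 := K\langle \tilde t,\partial_t\rangle$; the second factor is the standard delta-function $A_1$-module, holonomic of characteristic dimension $1$. So after the coordinate change $(x,t)\mapsto(x,\tilde t)$, $M\otimes_{K[x]}B_{Z|Y}$ is an external tensor product of holonomic modules, hence holonomic over $D_Y$. Equivalently, the characteristic variety of $D_Y/J$ is the image of $\Char(M)\times K$ under $(x,\eta,\tau)\mapsto (x,f(x),\eta_1-f_1\tau,\ldots,\eta_n-f_n\tau,\tau)$, of dimension $n+1$, which is the holonomic dimension on $Y$.

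The only real obstacle is justifying the PBW tensor decomposition of $D_Y$; once one recognizes the coordinate change $\tilde t = t-f$ as a $K$-algebra automorphism of $D_Y$ with the stated effect on the standard generators, the rest is formal.
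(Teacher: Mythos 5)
Your proof is correct and follows essentially the same route the paper points to: the paper attributes the ideal identity to Walther and refers to the proof of Theorem \ref{th:J} (a PBW-type decomposition of $D_Y$ in terms of $\tau$, $t$, $\partial_t$) and to Theorem \ref{th:Jholonomic} (an explicit characteristic variety computation) for the two assertions. Your observation that the decomposition $D_Y \cong \tau'(D_X)\otimes_K K\langle t-f,\partial_t\rangle$ is induced by the coordinate change $(x,t)\mapsto(x,t-f(x))$ is a clean way to justify the same step, and since that change of variables is polynomial (unlike the $t\mapsto t-f^{-1}$ implicit in Theorem \ref{th:J}) the argument here needs no localization and is a bit shorter; the external tensor product form of the holonomicity argument is likewise an equivalent repackaging of the characteristic variety computation.
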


The first part of this proposition was proved by Walther \cite{Walther1}. 
The proof is almost the same as the proof of Theorem \ref{th:J}. 
The last assertion can be proved in the same way as Theorem \ref{th:Jholonomic}.

Thus we have an algorithm to compute $M\otimes_{K[x]}B_{Z|Y}$, 
which was already given in \cite{OakuAdvance}.  
The inclusion $B_{Z|X} \subset \Lsc$ induces a homomorphism 
\[
\psi : M\otimes_{K[x]}B_{Z|Y}  \longrightarrow M\otimes_{K[x]}\Lsc
\]
of left $D_Y$-modules. 
The image of $\psi$ coincides with $M(u,f,s)$.

Our main aim is to compute 
the $D_X[s]$-submodule $M(u,f,s) = D_X[s](u\otimes f^s)$ 
of $M\otimes_{K[x]}\Lsc$. 
The following lemma was proved in \cite{OakuAdvance} as Proposition 6.13. 

\begin{lemma}\label{lemma:psi}
The homomorphism $\psi$ above is injective if and only if 
$H_{(f)}^0(M) = 0$; i.e., $M$ is $f$-saturated, 
\end{lemma}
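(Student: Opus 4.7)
My strategy is to work out both sides in terms of explicit $K[x]$-bases and then read off the kernel by comparing coefficients. First I would use the observation (recalled just before the lemma) that $B_{Z|Y}$ is a free $K[x]$-module with basis $\{\delta^{(k)}(t-f)\}_{k\in\N}$, so that every element of $M\otimes_{K[x]}B_{Z|Y}$ admits a unique expression $\sum_k v_k\otimes\delta^{(k)}(t-f)$ with $v_k\in M$ (almost all zero).

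Next I would pin down $M\otimes_{K[x]}\Lsc$ explicitly. Since $\Lsc = K[x,f^{-1},s]\cdot f^s$ is flat over $K[x]$ (being obtained from $K[x]$ by localizing at $f$ and then adjoining $s$), there is a natural isomorphism
\[
M\otimes_{K[x]}\Lsc \;\xrightarrow{\sim}\; M[f^{-1}][s]\cdot f^s,\qquad v\otimes a(x,s)f^{-k}f^s\;\longmapsto\;a(x,s)\,\iota(v)\,f^{-k}\cdot f^s,
\]
and the target is a free $M[f^{-1}]$-module on the basis $\{s^{\underline{k}}f^s\}_{k\in\N}$, where $s^{\underline{k}}:=s(s-1)\cdots(s-k+1)$, since the falling factorials form a $K$-basis of $K[s]$.

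Under the inclusion $B_{Z|Y}\hookrightarrow\Lsc$ recalled at the beginning of the section, $\delta^{(k)}(t-f)$ corresponds to $(-1)^k s^{\underline{k}} f^{-k}f^s$, so
\[
\psi\Bigl(\sum_k v_k\otimes\delta^{(k)}(t-f)\Bigr) \;=\; \sum_{k} (-1)^k s^{\underline{k}}\,\iota(v_k)\,f^{-k}\cdot f^s
\]
in $M[f^{-1}][s]\cdot f^s$. Reading off coefficients against the basis $\{s^{\underline{k}}f^s\}$, this vanishes if and only if $\iota(v_k)f^{-k}=0$ in $M[f^{-1}]$ for every $k$, equivalently (after multiplying by $f^k$) $\iota(v_k)=0$, i.e., $v_k\in\ker\iota = H^0_{(f)}(M)$. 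Consequently $\ker\psi = \bigoplus_{k\in\N} H^0_{(f)}(M)\otimes\delta^{(k)}(t-f)$, which is zero exactly when $H^0_{(f)}(M)=0$; conversely any nonzero $v\in H^0_{(f)}(M)$ yields a nonzero element $v\otimes\delta(t-f)\in\ker\psi$.

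The main obstacle will be the second step, namely verifying cleanly that $M\otimes_{K[x]}\Lsc$ really is the free $M[f^{-1}]$-module on the falling-factorial basis, and justifying the transfer $v\otimes f^{-k}f^s\mapsto\iota(v)f^{-k}f^s$ across the tensor. Once that identification is available, the rest is immediate.
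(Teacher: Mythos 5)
Your proof is correct and takes essentially the same route as the paper: expand in the $K[x]$-basis $\{\delta^{(k)}(t-f)\}$, push through $\psi$ using the identification $\delta^{(k)}(t-f)\leftrightarrow(-1)^k s(s-1)\cdots(s-k+1)f^{s-k}$, and compare coefficients against the falling-factorial basis of $K[s]$. You merely make explicit the flatness/free-basis identification $M\otimes_{K[x]}\Lsc\cong M[f^{-1}][s]\cdot f^s$ that the paper leaves implicit when it asserts $\psi(w)=0$ iff each $v_j\otimes f^{-j}$ vanishes in $M\otimes_{K[x]}K[x,f^{-1}]$.
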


\begin{proof}
An arbitrary element of $M\otimes_{K[x]}B_{Z|Y}$ is expressed uniquely 
as 
\[
 w =  \sum_{j=0}^k v_j \otimes \delta^{(j)}(t-f)
\]
with $k \in \N$ and $v_j \in M$. 
Then 
\[
\psi(w) = \sum_{j=0}^k (-1)^j v_j\otimes (s(s-1)\cdots (s-j+1)f^{-j} f^{s}) 
\]
vanishes if and only if each $v_j\otimes f^{-j}$ vanishes in 
$M\otimes_{K[x]}K[x,f^{-1}]$, which is equivalent to 
$v_j \in H^0_{(f)}(M)$. 
\end{proof}

\begin{lemma}
Let $M = D_Xu$  be a left $D_X$-module generated by $u$ and $f \in K[x]$ be 
a non-constant polynomial. Let 
$\iota : M \rightarrow M[f^{-1}]$ be the canonical homomorphsm. 
Then $\iota$ induces isomorphisms
\[
M\otimes_{K[x]}\Lsc \stackrel{\sim}{\longrightarrow} 
\iota(M)\otimes_{K[x]}\Lsc,
\qquad
M(u,f,s) \stackrel{\sim}{\longrightarrow} \iota(M)(u,f,s)
\] 
of left $D_X[s]$-modules.  
\end{lemma}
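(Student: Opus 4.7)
The plan is to exploit the simple fact that $\Lsc = K[x,f^{-1},s]f^s$ is naturally a module over $K[x,f^{-1}]$ (indeed, a free $K[x,f^{-1},s]$-module), so that tensoring with $\Lsc$ over $K[x]$ factors through localization at $f$. Concretely, for any $K[x]$-module $N$ one has the canonical identification
\[
N\otimes_{K[x]}\Lsc \;=\; N\otimes_{K[x]}K[x,f^{-1}]\otimes_{K[x,f^{-1}]}\Lsc \;=\; N[f^{-1}]\otimes_{K[x,f^{-1}]}\Lsc.
\]
Thus $N \otimes_{K[x]} \Lsc$ depends only on $N[f^{-1}]$, and the obstruction to $\iota$ inducing an isomorphism must reduce to something that dies after inverting $f$.

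Next I would apply this observation to the short exact sequence
\[
0 \longrightarrow H^0_{(f)}(M) \longrightarrow M \stackrel{\iota}{\longrightarrow} \iota(M) \longrightarrow 0.
\]
By definition every element of $H^0_{(f)}(M)$ is killed by some power of $f$, so $H^0_{(f)}(M)[f^{-1}]=0$. Exactness of localization then gives a canonical isomorphism $M[f^{-1}]\stackrel{\sim}{\to}\iota(M)[f^{-1}]$. Combining this with the identification of the first paragraph yields the desired isomorphism
\[
M\otimes_{K[x]}\Lsc \stackrel{\sim}{\longrightarrow} \iota(M)\otimes_{K[x]}\Lsc,
\]
realized as the map induced functorially by $\iota$.

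For the second isomorphism, I would verify that this map is $D_X[s]$-linear. This is immediate from the Leibniz-type definition $\partial_{x_i}(v\otimes w) = (\partial_{x_i}v)\otimes w + v\otimes\partial_{x_i}w$ together with the fact that $\iota$ is a $D_X$-module homomorphism; the action of $s$ lives entirely on the $\Lsc$ factor. Since the isomorphism sends $u\otimes f^s$ to $\iota(u)\otimes f^s$, it restricts to an isomorphism of the $D_X[s]$-submodules $M(u,f,s) = D_X[s](u\otimes f^s)$ and $\iota(M)(u,f,s) = D_X[s](\iota(u)\otimes f^s)$.

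The only substantive content is the first observation — that the $f$-inversion already built into $\Lsc$ automatically absorbs the $f$-torsion part of $M$ — after which everything reduces to exactness of localization and bookkeeping with the $D_X[s]$-action, so no real obstacle is anticipated.
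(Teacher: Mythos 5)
Your proof is correct and takes essentially the same route as the paper: both arguments reduce the problem to showing that the induced map $M[f^{-1}]\to\iota(M)[f^{-1}]$ is an isomorphism (via the identification $N\otimes_{K[x]}\Lsc = N[f^{-1}]\otimes_{K[x,f^{-1}]}\Lsc$), and both then observe that the $f$-torsion kernel $H^0_{(f)}(M)$ dies after inverting $f$. The only cosmetic difference is that you invoke exactness of localization applied to $0\to H^0_{(f)}(M)\to M\to\iota(M)\to 0$, whereas the paper verifies directly on elements that $v\otimes 1=0$ in $M[f^{-1}]$ iff $\iota(v)\otimes 1=0$ in $\iota(M)[f^{-1}]$; these are interchangeable.
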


\begin{proof}
Since $\Lsc$ is isomorphic to $K[x,f^{-1},s]$ as a $K[x,s]$-module, 
we have only to show that the natural homomorphism 
$M[f^{-1}] \rightarrow \iota(M)[f^{-1}]$ is an isomorphism. 
For any $v\in M$, $v\otimes 1 = 0$ holds in $M[f^{-1}]$ if and only if 
$f^kv = 0$ in $M$ with some $k\in\N$. This is equivalent to 
$\iota(v)\otimes 1 = 0$ in $\iota(M)[f^{-1}]$. 
Hence the first homomorphism is an isomorphism. 
This implies the injectivity of the second homomorphism, 
the surjectivity of which is obvious by the definition. 
\end{proof}

Summing up we obtain

\begin{algorithm}[Computing $M(u,f,s)$ and $M(u,f,\lambda)$]\rm

Input: $M = D_Xu = D_X/I$ with a finite set of generators of $I$, 
a non-constant $f \in K[x]$, and $\lambda \in K$. 

\noindent
Output: presentation of $M(u,f,s)$, i.e., $\Ann_{D_X[s]}(u\otimes f^s)$, 
and of $M(u,f,\lambda)$.  
\begin{enumerate}
\item
Compute $\iota(M) = D_X/\Ann_{D_Y}\iota(u)$ by the localization algorithm.
\item
Compute $J = \Ann_{D_Y} (\iota(u)\otimes \delta(t-f))$ by using 
Theorem \ref{th:J'}. 
\item
Compute $J \cap D_X[s]$, which is equal to $\Ann_{D_X[s]} (u\otimes f^s)$.
\item
The substitution $s = \lambda$ for generators of $J \cap D_X[s]$ 
gives a set of generators of $\Ann_{D_X}(u\otimes f^s)|_{s=\lambda}$.   
\end{enumerate}
\end{algorithm}

\begin{proposition}
Let $M = D_Xu$ be a left $D_X$-module generated by $u$ and $f \in K[x]$ 
a non-constant polynomial. 
Then the $b$-function $b_{u,f}(x)$ exists if and only if there exists a 
nonzero polynomial $b(s) \in K[s]$ such that 
\begin{equation}\label{eq:tP}
b(-\partial_tt)(u\otimes\delta(t-f)) \in tD_X[t\partial_t](u\otimes\delta(t-f))
= D_X[t\partial_t]t(u\otimes\delta(t-f))
= D_X[t\partial_t]f(u\otimes\delta(t-f)). 
\end{equation}
If $M$ is $f$-saturated and such $b(s)$ exists, then  $b_{u,f}(s)$ is 
the monic polynomial of the minimum degree among such $b(s)$.  
\end{proposition}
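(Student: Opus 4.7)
The plan is to exploit the $D_Y$-linear map $\psi:\tilde M:=M\otimes_{K[x]}B_{Z|Y}\to M\otimes_{K[x]}\Lsc$, whose image is $M(u,f,s)$ and which sends $u\otimes\delta(t-f)$ to $u\otimes f^s$. The ``if'' direction is immediate: the map $\psi$ transports $b(-\partial_t t)(u\otimes\delta(t-f))$ to $b(s)(u\otimes f^s)$ and transports the submodule $tD_X[t\partial_t](u\otimes\delta(t-f))$ to $D_X[s](u\otimes f^{s+1})$, so the displayed containment yields a nonzero $b(s)\in I(u,f)$ and hence $b_{u,f}(s)$ exists.

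For the converse, rewrite the defining relation as $b_{u,f}(s)(u\otimes f^s)=tP(s-1)(u\otimes f^s)$ using $P(s)t=tP(s-1)$ in $D_Y$. Lifting to $\tilde M$, the element
\[
 w := b_{u,f}(-\partial_t t)(u\otimes\delta(t-f))-tP(-\partial_t t-1)(u\otimes\delta(t-f))
\]
lies in $\ker\psi$. The heart of the argument is a lemma asserting that every element of $\ker\psi$ is killed by some nonzero polynomial in $s:=-\partial_t t$. By the description of $\ker\psi$ recorded in the proof of Lemma~\ref{lemma:psi}, any such element decomposes as a finite sum $\sum_j v_j\otimes\delta^{(j)}(t-f)$ with $v_j\in H^0_{(f)}(M)$, so it suffices to handle a single summand $v\otimes\delta^{(\ell)}(t-f)$ with $f^N v=0$. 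The identities $t\delta^{(\ell)}(t-f)=f\delta^{(\ell)}(t-f)-\ell\delta^{(\ell-1)}(t-f)$ and $\partial_t\delta^{(\ell)}(t-f)=\delta^{(\ell+1)}(t-f)$ combine to give
\[
s\bigl(v\otimes\delta^{(\ell)}(t-f)\bigr)=\ell\,v\otimes\delta^{(\ell)}(t-f)-fv\otimes\delta^{(\ell+1)}(t-f),
\]
so by induction $s^k(v\otimes\delta^{(\ell)}(t-f))$ is a $K$-linear combination of the vectors $f^{j}v\otimes\delta^{(\ell+j)}(t-f)$ for $0\le j\le k$. Since $f^N v=0$ kills the contributions for $j\ge N$, all iterates lie in an at-most $N$-dimensional $K$-subspace, and the $N+1$ iterates for $k=0,1,\dots,N$ are linearly dependent, yielding a nonzero polynomial annihilator.

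Assembling a common annihilator $c(s)\in K[s]\setminus\{0\}$ of the finitely many summands of $w$ and invoking $c(s)t=tc(s-1)$ then yields
\[
c(s)b_{u,f}(s)(u\otimes\delta(t-f))=t\bigl(c(s-1)P(-\partial_t t-1)\bigr)(u\otimes\delta(t-f))\in tD_X[t\partial_t](u\otimes\delta(t-f)),
\]
so $b'(s)=c(s)b_{u,f}(s)$ is a nonzero polynomial satisfying the condition. Finally, when $M$ is $f$-saturated Lemma~\ref{lemma:psi} makes $\psi$ injective; the residue $w$ then vanishes so one may take $c(s)=1$, and injectivity upgrades the ``if'' implication to an equality between the ideal of $b(s)$ satisfying the condition and $I(u,f)$, whence $b_{u,f}(s)$ is the minimum-degree monic such $b$. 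I expect the main obstacle to be the finite-dimensionality step of the key lemma, where the $f$-torsion hypothesis must be combined with the $s$-shift formula to trap the orbit of $v\otimes\delta^{(\ell)}(t-f)$ inside a finite-dimensional subspace.
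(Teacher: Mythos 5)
Your ``if'' direction and the handling of the $f$-saturated case are essentially identical to the paper's. Where you diverge is in the ``only if'' direction. The paper works directly: it first observes that the lifted element $w=b(-\partial_t t)(u\otimes\delta)-P(-\partial_t t)f(u\otimes\delta)$ lies in $\ker\psi$, hence decomposes with coefficients in $H^0_{(f)}(M)$, so $f^k w = 0$ for some $k$; it then replaces $f^k$ on the $b$-side by $t^k$ using $(t-f)\delta=0$, multiplies by $\partial_t^k$, and uses the operator identity $\partial_t^k b(-\partial_t t)t^k = c(-\partial_t t)$ to land the left-hand side in $K[s](u\otimes\delta)$, claiming the right-hand side lies in $D_X[t\partial_t]f(u\otimes\delta)$. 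Your proposal instead isolates a clean intermediate lemma: every element of $\ker\psi$ is annihilated by a nonzero polynomial in $s=-\partial_t t$, proved by the identity $s(v\otimes\delta^{(\ell)})=\ell\,v\otimes\delta^{(\ell)}-fv\otimes\delta^{(\ell+1)}$ and the resulting finite-dimensional $s$-orbit when $f^N v=0$. From there the commutation $c(s)t=tc(s-1)$ gives the conclusion immediately for $b'(s)=c(s)b_{u,f}(s)$. Both proofs ultimately rest on the same input from Lemma~\ref{lemma:psi} (the $H^0_{(f)}$-structure of $\ker\psi$), but your route is more modular: the $s$-torsion lemma cleanly encapsulates the finiteness argument, and the final step avoids the paper's manipulation of $\partial_t^k f^k P(-\partial_t t)f$, which as written requires a further (and somewhat nontrivial) verification that the result lands in $D_X[t\partial_t]f(u\otimes\delta)$. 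In exchange, your argument is a bit longer and introduces an auxiliary lemma that the paper dispenses with. Your proof is correct.
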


\begin{proof}
Assume that (\ref{eq:tP}) holds. Then there exists $P(s) \in D_X[s]$ 
such that 
\[
b(-\partial_tt)(u\otimes\delta(t-f)) = P(-\partial_tt)f(u\otimes \delta(t-f)). 
\]
Applying the homomorphism $\psi$ we get 
$b(s)(u\otimes f^s) = P(s)(u\otimes f^{s+1})$. Hence $b_{u,f}(s)$ exists and 
divides $b(s)$. 

On the other hand, assume that there exist nonzero $b(s) \in K[s]$ and 
 $P(s) \in D_X[s]$ such that 
$b(s)(u\otimes f^s) = P(s)(u\otimes f^{s+1})$ holds in $M\otimes_{K[x]}\Lsc$. 
Then as is seen by the proof of Lemma \ref{lemma:psi}, there exists $k \in \N$ 
such that
\[
f^kb(-\partial_tt)(u\otimes\delta(t-f)) 
= f^kP(-\partial_tt)f(u\otimes \delta(t-f)). 
\]
Since 
\[
f^kb(-\partial_tt)(u\otimes\delta(t-f)) = 
b(-\partial_tt)f^k(u\otimes\delta(t-f)) 
= b(-\partial_tt)t^k(u\otimes\delta(t-f)) 
\]
holds, we get
\[
\partial_t^kb(-\partial_tt)t^k(u\otimes\delta(t-f)) 
= \partial_t^kf^kP(-\partial_tt)f(u\otimes\delta(t-f)) 
\in D_X[t\partial_t]f(u\otimes\delta(t-f)). 
\]
This completes the proof because there exists $c(s) \in K[x]$ such that
$\partial_t^kb(-\partial_tt)t^k = c(-\partial_tt)$. 
\end{proof}

Now we obtain an algorithm to determine whether the $b$-function exists and 
to compute it if it does: 

\begin{algorithm}[Computing the $b$-function $b_{u,f}(s)$]\rm
\mbox{}\\
Input: $M = D_Xu = D_X/I$ with a finite set of generators of $I$ and 
a non-constant $f \in K[x]$. 

\noindent
Output: the $b$-function $b_{u,f}(s)$ if it exists. `No' if it does not exist.  
\begin{enumerate}
\item
Compute $J' := \Ann_{D_Y}(u\otimes \delta(t-f))$ by using Theorem \ref{th:J'}. 
\item
Compute $I' = (J' + D_X[s]f) \cap K[s]$ by elimination. 
If $I' \neq \{0\}$, then there exists $b_{u,f}(s)$. 
Otherwise, the $b$-function does not exist; output `No' and quit.  
\item
Compute a set of generators of 
$J := \Ann_{D_X[s]} (u\otimes f^s)$ by the preceding algorithm. 
\item
Compute $I(u,f) = (J + D_X[s]f) \cap K[s]$ by elimination. 
The  monic  generator of $I(u,f)$ is  $b_{u,f}(s)$. 
\end{enumerate}
\end{algorithm}

\begin{algorithm}[Computing $D_X(u\otimes f^\lambda)$] 
\rm
\mbox{}\\
Input: $M = D_Xu = D_X/I$ with a finite set of generators of $I$, 
a non-constant $f \in K[x]$, and $\lambda \in K$. 

\noindent
Output: presentation of $D_X(u\otimes f^\lambda)$, 
i.e., $\Ann_{D_X}(u\otimes f^\lambda)$ if $b_{u,f}(s)$ exists. 
\begin{enumerate}
\item
Compute $M(u,f,s)$ and $b_{u,f}(s)$ by preceding algorithms.  
Quit if $b_{u,f}(s)$ does not exist. 
\item
Let $k_0$ be the maximum nonzero integer, if any, 
such that $b_{u,f}(\lambda - k_0) = 0$. 
If there is no such $k_0$, then set $k_0 = 0$. 
\item
Compute $I := \Ann_{D_X}(u\otimes f^{\lambda-k_0})$ 
from $\Ann_{D_X[s]}(u\otimes f^s)$ by substitution $s = \lambda - k_0$. 
\item
Compute the left ideal 
\[
I : f^{k_0} := \{ P \in D_X \mid Pf^{k_0} \in I\}
\]
by an appropriate Gr\"obner basis. 
Then $I : f^{k_0} = \Ann_{D_X} (u\otimes f^\lambda)$.  
\end{enumerate}
\end{algorithm}

\begin{example}\rm
Set $n=2$, $x_1 = x$, $x_2=y$, $P = x\partial_x^2+\partial_y$, and  
$M = D_X/D_XP = D_Xu$ as in Example \ref{ex:P}, where it is shown that 
$b_{u,x}(s) = (s+1)(s+2)$ and $b_{u,y}(s) = s+1$. We have
\begin{align*}
M(u,x,s) &:= D_x[s](u\otimes x^s) = D_X[s]/D_X[s](x^2\dx^2-2sx\dx+x\dy+s^2+s)
\\
M(u,y,s) &:= D_x[s](u\otimes y^s) = D_X[s]/D_X[s](xy\dx^2+y\dy-s).
\end{align*}
\end{example}

\begin{example}\label{ex:L16}\rm
Set $n=2$ and write $x_1 = x$, $x_2 = y$. 
Set $f(x,y) = x^3-y^2$ and 
\[
\Lsc = D_X[s]f^s, \qquad
M := \Lsc\Bigl(\frac16\Bigr) = \Lsc/\Bigl(s-\frac16\Bigr)\Lsc. 
\]
Then $M[f^{-1}]$ is generated by 
$f^{-1}\iota((f^s)|_{s=1/6})$
and isomorphic to $D_X/J$ with the left ideal $J$ of $D_X$ generated by
\[
2x\dx + 3y\dy + 5, \quad 2y\dx+3x^2\dy,\quad
4y\dx^2-9xy\dy^2-12x\dy. 
\]
In fact, $M[f^{-1}]$ is isomorphic to $K[x,f^{-1}]f^{1/6}$, 
and $\iota(M)$ to $D_Xf^{1/6}$.  

The homomorphic image $\iota(M)$ of $M$ in 
$M[f^{-1}]$ is given by
\begin{align*}
\iota(M) &= D_X\iota((f^s)|_{s=1/6}) = D_X/\tilde I,
\\
\tilde I &= D_X(2x\partial_x + 3y\partial_y-1) + D_X(2y\partial_x + 3x^2\partial_y) 
+ D_X(8\partial_x^3 + 27y\partial_y^3 + 9\partial_y^2), 
\end{align*}
while
\[
 M = D_X(f^s|_{s=1/6}) = D_X/I, 
\qquad 
I = D_X(2x\partial_x + 3y\partial_y-1) + D_X(2y\partial_x + 3x^2\partial_y).  
\]
By syzygy computation, we get
\begin{align*}
H^0_{(f)}(M) &\cong D_X/(D_Xx+D_Xy) \cong H^2_{(x,y)}(K[x,y]),
\\
H^1_{(f)}(M) &= M[f^{-1}]/\iota(M) \cong D_X/(D_Xx+D_Xy) \cong H^2_{(x,y)}(K[x,y]). 
\end{align*}
Set $u = f^s|_{s=1/6}$. Then the $b$-function for $u$ and $f$ is 
\[
b_{u,f}(s) = (s+1)\bigl(s+\frac43\bigr)\bigl(s+\frac76\bigr) 
= b_f\bigl(s + \frac16\bigr),
\]
where $b_f(s) = (s+1)(s+5/6)(s+7/6)$ is the $b$-function of $f$. 
\end{example}

\begin{example}\rm
Set $n=2$ and write $x_1 = x$, $x_2 = y$. 
Let $M = H^1_{(xy)}(K[x,y])$ be the first local cohomology group supported by 
$xy = 0$. Let $\iota : M \rightarrow M[x^{-1}]$ be the canonical homomorphism. 
Then we have 
\begin{align*}
 M[x^{-1}] &= \iota(M) = D_X\iota([(xy)^{-1}]) = D_X/(D_X(x\partial_x + 1) + D_X y),
\\
 H^0_{(x)}(M)& \cong D_X/(D_Xx + D_X\partial_y) \cong H^1_{(x)}(K[x,y]), 
\quad 
H^1_{(x)}(M) = 0. 
\end{align*}
The $b$-function of $u := [(xy)^{-1}]$ and $x$ is 
$b_{u,x}(s) = s$. The module $M(u,x,s)$ is 
\[
M(u,x,s) = D_X[s]/(D_X[s](x\dx-s+1) + D_X[s]y).
\]
\end{example}

\begin{example}\rm
Set $n=3$ and write $x_1 = x$, $x_2 = y$, $x_3 = z$. 
Set  $f= x^3-y^2z^2$, $g = x^3-y^2$, and 
$M = H^1_{(f)}(K[x,y])$. 
Note that $f$ has non-isolated singularities, and
$f$ and $g$ are not of complete intersection. 
$M$ is saturated with respect to $x$, $y$, $g$, and
the $b$-functions with $u := [f^{-1}]$ are 
\begin{align*}
&
b_{u,x} = (s+1)^2\bigl(s - \frac12\bigr)^2, 
\quad
b_{u,y} = (s+1)\bigl(s- \frac13\bigr)\bigl(s+ \frac13\bigr), 
\\&
b_{u,g} = (s+1)^2\bigl(s + \frac12\bigr)
\bigl(s + \frac13\bigr)\bigl(s+ \frac23\bigr)
\bigl(s - \frac16\bigr)\bigl(s+ \frac16\bigr), 
\end{align*}
while the $b$-function of $f$ is 
\[
(s+1)\bigl(s+\frac43\bigr)\bigl(s+\frac53\bigr)
\bigl(s+\frac56\bigr)\bigl(s+\frac76\bigr). 
\]
The first local cohomology groups are
\begin{align*}
H^1_{(x)}(M) &\cong D_X/(D_Xx + D_X(y*dy+2) + D_Xy^2z^2 + D_X(z\dz+2)), 
\\
H^1_{(y)}(M) &\cong D_X/(D_Xx^3 + D_X(x\dx+3) + D_Xy + D_X\dz,
\\
H^1_{(g)}(M) &\cong D_X/(D_Xg + D_Xy^2(z^2-1) + D_XP_1 + D_XP_2 + D_XP_3
\end{align*}
with 
\[
P_1 = 2y^2\dx+3yx^2\dy-6x^2,
\quad P_2 = (z^2-1)y\dy+2z^2-2, \quad P_3 = 2x\dx+3y\dy+12.
\]
These cohomology groups are isomorphic to the second local cohomology groups of 
$K[x,y,z]$ supported by the ideals $(f,x)$, $(f,y)$, $(f,g)$ respectively,
which can be also computed by an algorithm of \cite{Walther1} or \cite{OTalgDmod}.
The multiplicities (see the next section) of these modules are $3$, $1$, and $13$ 
respectively. In fact, $H^1_{(y)}(M) = H^2_{(f,y)}(K[x,y,z])$ is isomorphic to 
$H^2_{(x,y)}(K[x,y,z])$.  
\end{example}

\section{Length and multiplicity of $D$-modules}

W set $X = K^n$ as in the preceding sections. 
First let us recall basic facts about the length and the multiplicity 
of a left $D_X$-module following J.~Bernstein (\cite{Bernstein1},\cite{Bernstein2}). 
Let $M$ be a finitely generated left $D_X$-module. 
A composition series of $M$ of length $k$ is a sequence 
\[
M = M_0 \supset M_1 \supset \cdots \supset M_k = 0
\]
of left $D_n$-submodules such that 
$M_i/M_{i-1}$ is a nonzero simple left $D_X$-module 
(i.e.\ having no proper left $D_X$-submodule other than $0$)  
for $i=1,\dots k$. 
The {\em length} of $M$, which we denote by $\length M$, is 
the least length of composition series (if any) of $M$. 
If there is no composition series, the length of $M$ is defined 
to be infinite.  
The length is additive in the sense that if 
\[
 0 \longrightarrow N \longrightarrow M \longrightarrow L \longrightarrow 0
\]
is an exact sequence of left $D_X$-modules of finite length, 
then $\length M = \length N + \length L$ holds. 

For each integer $k$, set
\[
F_k(D_X) = \Bigl\{\sum_{|\alpha| + |\beta| \leq k}
a_{\alpha\beta}x^\alpha\partial^\beta \mid a_{\alpha\beta}\in K\}\Bigr\}. 
\]
In particular, we have $F_k(D_X) = 0$ for $k < 0$ and $F_0(D_X) = K$. 
The filtration $\{F_k(D_X)\}_{k\in\Z}$ 
is called the Bernstein filtration on $D_X$. 

Let $M$ be a finitely generated left $D_X$-module. 
A family $\{F_k(M)\}_{k\in \Z}$ 
of $K$-subspaces of $M$ is called a {\em Bernstein filtration} on $M$ if it 
satisfies
\begin{enumerate}
\item 
$F_k(M) \subset F_{k+1}(M)\quad (\forall k\in \Z), 
\qquad \bigcup_{k \in \Z} F_k(M) = M$
\item
$F_j(D_X)F_k(M) \subset F_{j+k}(M)\quad (\forall j,k\in \Z)$
\end{enumerate} 

Moreover, $\{F_k(M)\}$ is called a {\em good Bernstein filtration} if
there exist $u_i \in F_{k_i}(M)$ ($i=1,\dots,m$) such that 
\[
F_k(M) = F_{k-k_1}(D_X)u_1 + \cdots + F_{k-k_m}(D_X)u_m \qquad (\forall k\in \Z). 
\]
If $\{F_k(M)\}$ is a good Bernstein filtration, then each $F_k(M)$ is
a finite dimensional vector space over $K$ and $F_k(M) = 0$ for 
$k \ll 0$ (see e.g., 1.3 of \cite{OakuLecture}). 

Let $\{F_k(M)\}$ be a good Bernstein filtration on $M$. 
Then there exists a polynomial  
$p(T) = c_dT^d + c_{d-1}T^{d-1} + \cdots + c_0\in \Q[T]$ such that 
\[
\dim_K F_k(M) = p(k) \quad (k \gg 0)
\]
and $d!c_d$ is a positive integer.  
We call $p(T)$ the Hilbert polynomial of $M$ with respect to 
the filtration $\{F_k(M)\}$. 
The leading term of $p(T)$ does not depend on the choice of 
a good Bernstein filtration $\{F_k(M)\}$.  
The degree $d$ of the Hilbert polynomial $p(T)$ is 
called the {\em dimension} of $M$ and denoted  $\dim M$. 
The {\em multiplicity} of $M$, denoted $\mult M$  is defined to be the positive integer $d!c_d$.
The dimension and the multiplicity are invariants of a finitely generated 
left $D_X$-module.

If $M \neq 0$, then the dimension of $M$ is not less than $n$ 
(Bernstein's inequality). 
By definition,  $M$ is holonomic if $M=0$ or $\dim M = n$. 
If $M$ is a holonomic left $D_X$-module, we have an inequality
$\length M \leq \mult M$ and hence $M$ is of finite length in particular. 
Moreover, the multiplicity 
is additive for holonomic left $D_X$-modules. 

We can compute the dimension and the multiplicity of a given finitely generated 
(not necessarily holonomic) $D_X$-module by using a Gr\"obner basis with 
respect to a term order compatible with the Bernstein filtration. 

\begin{example}\rm
Let $M$ be the $D_X$-module with $X = K^2$ defined in Example \ref{ex:L16}. 
We get exact sequences
\begin{align*}
& 0 \longrightarrow H^0_{(f)}(M) \longrightarrow M \longrightarrow 
\iota(M) \longrightarrow 0
\\&
 0 \longrightarrow H^0_{(f)}(M) \longrightarrow M \longrightarrow 
M[f^{-1}] \longrightarrow H^1_{(f)}(M) \longrightarrow 0
\end{align*}
with $H^0_{(f)}(M) \cong H^2_{(x,y)}(K[x,y]) \cong H^1_{(f)}(M)$.
We have 
\[
\mult M = \mult M[f^{-1}] = 6, \quad \mult\iota(M) = 5, \quad
\mult H^0_{(f)}(M) = \mult H^1_{(f)}(M) = 1.
\]
\end{example}

The following two propositions are easy and  should be well-known. 

\begin{proposition}
Let $f \in K[x]$ be a non-constant polynomial. Then 
the multiplicity of the left $D_X$-module $K[x,f^{-1}]$ is at most $(\deg f+1)^n$. 
\end{proposition}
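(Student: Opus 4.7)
The plan is to exhibit an explicit good Bernstein filtration on $M := K[x,f^{-1}]$ whose growth is controlled by the quantity $(d+1)^n$, where $d = \deg f$; bounding the leading coefficient of the associated Hilbert polynomial by $(d+1)^n/n!$ will yield the claim.

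First I would note that $M$ is cyclic over $D_X$. The existence of the classical Bernstein-Sato polynomial $b_f(s)$ provides $P(s) \in D_X[s]$ and a nonzero polynomial $b_f(s)$ such that $P(s)f^{s+1} = b_f(s)f^s$; specializing $s$ to negative integers and choosing $l_0 \in \N$ large enough that $b_f(-l-1) \neq 0$ for all $l \ge l_0$, one obtains $f^{-l-1} \in D_X \cdot f^{-l}$ by induction on $l$, and hence $M = D_X u_0$ with $u_0 := 1/f^{l_0}$. I would then take the associated good Bernstein filtration $G_k(M) := F_k(D_X)\,u_0$.

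Next I would establish, by induction on $k$, the inclusion
\[
G_k(M) \subseteq V_k := \Bigl\{\, g/f^{l_0+k} \,\Big|\, g \in K[x],\ \deg g \le (d+1)k \,\Bigr\}.
\]
The base case $u_0 \in V_0$ is trivial. For the induction step it suffices to verify that $x_i V_{k-1} \subseteq V_k$ and $\partial_{x_i} V_{k-1} \subseteq V_k$. The former follows from $x_i(g/f^{l_0+k-1}) = (x_ig)f/f^{l_0+k}$, which raises the numerator degree by $1+d$. The latter follows from the quotient rule
\[
\partial_{x_i}\bigl(g/f^{l_0+k-1}\bigr) = \bigl(f\,\partial_{x_i}g - (l_0+k-1)(\partial_{x_i}f)\,g\bigr)\big/f^{l_0+k},
\]
which raises the numerator degree by only $d-1$. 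In both cases, starting from $\deg g \le (d+1)(k-1)$, the new numerator still has degree at most $(d+1)k$.

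Finally, $\dim_K V_k$ equals the dimension of the space of polynomials in $n$ variables of degree at most $(d+1)k$, namely $\binom{(d+1)k+n}{n} = \frac{(d+1)^n}{n!}k^n + O(k^{n-1})$. The Hilbert polynomial of $M$ with respect to the good filtration $G_k$ is therefore dominated by this quantity for $k \gg 0$, so its degree is at most $n$ and its leading coefficient is at most $(d+1)^n/n!$. Bernstein's inequality forces the dimension to equal $n$, whence $\mult M \le (d+1)^n$. The only subtle point is the induction step: although $\partial_{x_i}$ raises the denominator exponent by one, the degree of $\partial_{x_i}f$ is $d-1$ rather than $d$, leaving precisely enough slack to absorb the multiplication by $f$ required to rebase each term over the common denominator $f^{l_0+k}$.
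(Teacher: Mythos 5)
Your proof is correct and takes essentially the same approach as the paper: both build a Bernstein filtration on $K[x,f^{-1}]$ with denominator a power of $f$ and numerator degree bounded by $(d+1)k$, and read off the multiplicity bound from the resulting Hilbert growth. The paper skips the cyclicity step by noting its filtration need not be good (any Bernstein filtration dominates every good one up to a shift, which already bounds the Hilbert polynomial), whereas you arrange a good filtration explicitly via the Bernstein--Sato generator $f^{-l_0}$; the conclusion is the same.
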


\begin{proof}
Let $d$ be the degree of $f$.  
\[
F_k(K[x,f^{-1}]) := \left\{\frac{a}{f^{k+1}}\mid 
a \in K[x_1,\dots,x_n],\, \deg a \leq (d+1)k \right\} 
\quad(k \in \Z), 
\]
is a (not necessarily good) Bernstein filtration 
on $M$ with 
\[
\dim_K F_k(K[x,f^{-1}]) = \binom{n+(d+1)k}{n} .
\]
This implies $\dim K[x,f^{-1}] = n$ and $\mult M \leq (d+1)^n$. 
\end{proof}

\begin{proposition}
Let  $n=1$ and $f \in K[x] = K[x_1]$ be non-constant square free. 
Then one has 
$\,\,\mult K[x,f^{-1}] = \deg f +1$. 
\end{proposition}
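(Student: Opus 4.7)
The plan is to apply additivity of multiplicity for holonomic $D_X$-modules to the short exact sequence
\[
0 \longrightarrow K[x] \longrightarrow K[x,f^{-1}] \longrightarrow H^1_{(f)}(K[x]) \longrightarrow 0.
\]
Combined with $\mult K[x] = 1$ (from the good filtration $F_k(K[x]) = \{g \in K[x] : \deg g \leq k\}$, which has Hilbert polynomial $k+1$), this reduces the claim to showing $\mult H^1_{(f)}(K[x]) = \deg f$.

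Any good Bernstein filtration on a $D_X$-module $M$, say $F_k(M) = \sum_i F_{k-k_i}(D_X) u_i$, extends to a good Bernstein filtration on $M\otimes_K\bar K$ with the same generators, and $\dim_{\bar K}(F_k(M)\otimes_K\bar K) = \dim_K F_k(M)$; hence the Hilbert polynomial, and so the multiplicity, is invariant under base change to an algebraic closure $\bar K$. Let $\bar D_X$ denote the Weyl algebra over $\bar K$. Over $\bar K$, since $f$ is square-free, it splits as $f = c\prod_{j=1}^{d}(x-\alpha_j)$ with $d=\deg f$ distinct roots, and partial fractions in $\bar K[x,f^{-1}]$, valid because the $(x-\alpha_j)$ are pairwise coprime, give a $\bar D_X$-module direct sum decomposition
\[
H^1_{(f)}(\bar K[x]) \cong \bigoplus_{j=1}^{d} H^1_{(x-\alpha_j)}(\bar K[x]),
\]
where each summand is the image of the $\bar D_X$-stable inclusion $\bar K[x,(x-\alpha_j)^{-1}]\hookrightarrow\bar K[x,f^{-1}]$ passed to the quotient modulo $\bar K[x]$.

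Each summand $H^1_{(x-\alpha_j)}(\bar K[x])$ is cyclically generated over $\bar D_X$ by $[(x-\alpha_j)^{-1}]$: this class is killed by $x-\alpha_j$, and the iterated derivatives $\partial_x^{k-1}[(x-\alpha_j)^{-1}] = (-1)^{k-1}(k-1)!\,[(x-\alpha_j)^{-k}]$ give a $\bar K$-basis, yielding a $\bar D_X$-isomorphism $H^1_{(x-\alpha_j)}(\bar K[x]) \cong \bar D_X/\bar D_X(x-\alpha_j)$. The principal symbol of $x-\alpha_j$ in the Bernstein filtration is $x$, so the associated graded module is $\bar K[x,\xi]/(x) \cong \bar K[\xi]$ with Hilbert polynomial $k+1$ and multiplicity $1$. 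Summing over $j$ gives $\mult H^1_{(f)}(\bar K[x]) = d$, hence $\mult K[x,f^{-1}] = 1 + \deg f$. The main technical points to spell out carefully are the base change invariance of the Hilbert polynomial and the $\bar D_X$-equivariance of the partial fraction decomposition of $H^1_{(f)}(\bar K[x])$.
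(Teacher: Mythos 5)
Your proof is correct, but it takes a genuinely different route from the paper's. Both arguments start by reducing, via additivity of multiplicity and $\mult K[x]=1$, to showing $\mult H^1_{(f)}(K[x]) = \deg f$. From there the paper proceeds directly over $K$: it observes that for square-free $f$ the cyclic module $H^1_{(f)}(K[x])$ has presentation $D_1/D_1 f$ (i.e.\ the annihilator of $[1/f]$ is exactly $D_1 f$), equips it with the good Bernstein filtration $F_k(D_1)/F_{k-d}(D_1)f$ where $d=\deg f$, and reads off the Hilbert polynomial $\binom{k+2}{2}-\binom{k-d+2}{2}=dk-\tfrac12(d-1)(d-2)$, whose leading coefficient gives multiplicity $d$. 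You instead base-change to $\bar K$, invoking the (correct, but not stated in the paper) fact that the Hilbert polynomial and hence the multiplicity are stable under $\otimes_K\bar K$, then use the partial fraction decomposition $H^1_{(f)}(\bar K[x])\cong\bigoplus_{j=1}^d H^1_{(x-\alpha_j)}(\bar K[x])$ as $\bar D_X$-modules, and observe each summand is $\bar D_X/\bar D_X(x-\alpha_j)$ of multiplicity $1$. The trade-offs: the paper's argument is shorter once you accept the presentation $H^1_{(f)}(K[x])\cong D_1/D_1f$ (itself relying on the square-free hypothesis, essentially via $b_f(s)=s+1$), whereas your argument proves the required annihilator facts only for the linear factors $x-\alpha_j$, where they are immediate, at the cost of the base-change lemma and the verification that the partial fraction decomposition is $\bar D_X$-equivariant. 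Your version is arguably more conceptual (multiplicity counts the points of the support with unit weight), while the paper's is the more economical Gr\"obner-style computation.
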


\begin{proof} 
Set 
$M := H_{(f)}^1(K[x])$. Then $M$ is isomorhpic to $D_X/D_Xf$ since $f$ 
is square-free. Hence 
$F_k(M) := F_k(D_1)[f^{-1}] \cong F_k(D_1)/F_{k-d}(D_2)f$ 
with $d := \deg f$ constitute a good Bernstein filtration on $M$. Since
\begin{align*}
\dim F_k(M) &= \dim F_k(D_1) - \dim F_{k-d}(D_1)
\\&= 
\binom{k+2}{2} - \binom{k-d+2}{2} = dk - \frac12 (d-1)(d-2),
\end{align*}
hods for $k \geq d$, the multiplicity of $M$ is $d$.  
\end{proof}

We shall give two examples in two variables. 

\begin{proposition}
Set $X = K^2$ and write $x_1 = x$, $x_2 = y$. 
Set $f = x^m + y^l$ with positive integers $l,m$. 
Then the multiplicity of $K[x,y,f^{-1}]$ equals $2\max\{l,m\}$. 
\end{proposition}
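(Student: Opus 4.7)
The plan is to compute $\mult(M)$ for $M=K[x,y,f^{-1}]$ by exhibiting two explicit annihilators of $u=1/f$ in $D_X$ whose Bernstein principal symbols form a regular sequence in $K[x,y,\xi,\eta]$, and then applying Bezout.

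Assume without loss of generality that $m\ge l$. Since $f=x^m+y^l$ is quasi-homogeneous with an isolated singularity at the origin, every root of its Bernstein-Sato polynomial lies in $(-2,0)$, with $-1$ as the only integer root. The corollary to Theorem~\ref{th:gen} (applied with $\lambda=-1$) then gives $M=D_X\cdot(1/f)$, so $M\cong D_X/\Ann_{D_X}(1/f)$ and $F_k(M):=F_k(D_X)\cdot(1/f)$ is a good Bernstein filtration with $\dim_K F_k(M)=\mult(M)\,k^2/2+O(k)$.

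Two explicit annihilators of $1/f$ are
\[
P_1 = lx\dx + my\dy + lm, \qquad P_2 = ly^{l-1}\dx - mx^{m-1}\dy,
\]
derived from the Euler identity $(lx\dx+my\dy)f=lmf$ and the Jacobian relation $ly^{l-1}f_x=mx^{m-1}f_y$ respectively. Their Bernstein principal symbols
\[
\sigma(P_1)=lx\xi+my\eta,\qquad \sigma(P_2)=ly^{l-1}\xi-mx^{m-1}\eta
\]
have Bernstein degrees $2$ and $\max\{l,m\}=m$. A direct check shows they share no common factor in $K[x,y,\xi,\eta]$, so form a regular sequence, and by Bezout the complete-intersection ring $K[x,y,\xi,\eta]/(\sigma(P_1),\sigma(P_2))$ has Krull dimension $2$ and multiplicity $\deg\sigma(P_1)\cdot\deg\sigma(P_2)=2m$. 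Hence $D_X/(D_XP_1+D_XP_2)$ is holonomic with $\mult=2m$, and the surjection $D_X/(D_XP_1+D_XP_2)\twoheadrightarrow M$ together with additivity of multiplicity on the resulting short exact sequence gives $\mult(M)\le 2m$.

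The remaining step is to establish equality, equivalently the identity $\Ann_{D_X}(1/f)=D_XP_1+D_XP_2$, or what is the same, that $(P_1,P_2)$ is a standard basis of the annihilator with respect to the Bernstein filtration. The auxiliary annihilators $P_3=f\dy+ly^{l-1}$ and $P_4=f\dx+mx^{m-1}$, whose symbols $f\eta$ and $f\xi$ could in principle enlarge the initial ideal, are handled by explicit cofactor combinations involving $y\,\sigma(P_1)-x\,\sigma(P_2)$ and $x\,\sigma(P_1)+y\,\sigma(P_2)$ respectively; the main obstacle is extending this reduction to all higher-order Gr\"obner remainders in full generality, especially accounting for the embedded contribution at the singular point $(0,0)$ of $V(f)$ when $l,m\ge 2$. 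Once this standard-basis verification is carried out, the Hilbert polynomial of $\gr M=K[x,y,\xi,\eta]/(\sigma(P_1),\sigma(P_2))$ yields $\mult(M)=2m=2\max\{l,m\}$, completing the proof.
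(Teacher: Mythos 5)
Your approach is genuinely different from the paper's, but it contains an acknowledged gap at precisely the step that carries the mathematical content.

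You work with $K[x,y,f^{-1}]\cong D_X/\Ann_{D_X}(1/f)$ directly, exhibit $P_1,P_2\in\Ann_{D_X}(1/f)$ whose Bernstein principal symbols form a regular sequence, and deduce via the complete-intersection criterion and Bezout that $\mult D_X/(D_XP_1+D_XP_2)=2\max\{l,m\}$. The surjection onto $K[x,y,f^{-1}]$ then gives only the \emph{upper} bound $\mult K[x,y,f^{-1}]\le 2\max\{l,m\}$. To finish, you need $\Ann_{D_X}(1/f)=D_XP_1+D_XP_2$, i.e.\ that the surjection has trivial kernel — and this is exactly what your last paragraph concedes is not done ("Once this standard-basis verification is carried out..."). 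The hand-waved "cofactor combinations" handling $P_3,P_4$ do not constitute a proof, and you do not establish that no further first-order or higher-order annihilators enlarge the initial ideal. This is the nontrivial part.

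The paper takes a dual but more explicit route: it works with $M=H^1_{(f)}(K[x,y])$, takes the generators $\{f,E,P\}$ of $\Ann_{D_X}[f^{-1}]$ as known (citing Kashiwara), and \emph{verifies} that these form a Gr\"obner basis by checking that all S-pairs reduce to zero in both cases $m<l$ and $m=l$. From the initial ideal it then reads off the Hilbert function directly. That S-pair check is the step your outline replaces with the unverified standard-basis claim. A clean way to close your gap would be to invoke the theorem that for a quasi-homogeneous polynomial with isolated singularity the annihilator of $f^s$ is generated by the Euler relation and the Jacobian relation, then specialize at $s=-1$ using Proposition \ref{prop:fs1}; but that route requires $l,m\ge2$ and would still need a separate treatment of the smooth cases $l=1$ or $m=1$. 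As written, the argument proves an inequality, not the stated equality.
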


\begin{proof}
We may assume $m \leq l$. 
Set $M := H_{(f)}^1(K[x,y])$. 
Since the $b$-function $b_f(s)$ of $f$ does not have any negative 
integer $\leq -2$ as a root (see e.g., 6.4 of \cite{KashiwaraAlg}), 
$M$ is generated by  $u:=[f^{-1}] \in M$ over $D_X$.  
The annihilator $\Ann_{D_X} u$ is generated by 
\begin{align*}&
f,\quad
E := lx\partial_x + my\partial_y + ml, 
\quad P := ly^{l-1}\partial_x - mx^{m-1}\partial_y
\end{align*}
(see also 6.4 of \cite{KashiwaraAlg}).  
A Gr\"obner basis of $\Ann_D[f^{-1}]$ with respect to  
a total-degree reverse lexicographic order $\prec$ such that 
$x \succ y \succ \xi \succ \eta$ is $G = \{f,E,P\}$, 
where $\xi$ and $\eta$ are the commutative variables 
corresponding to $\partial_x$ and $\partial_y$ respectively.   
In fact, in case $m<l$ the $S$-pairs (see Chapter 2 of \cite{OakuLecture}) are 
divisible by $G$: 
\begin{align*}&
\spoly_\prec(f,E) = lx\partial_xf - y^lE = x^mE - my\partial_y f,
\\&
\spoly_\prec(f,P) = l\partial_xf - yP = x^{m-1}E, 
\quad
\spoly_\prec(E,P) = y^{l-1}E - xP = m\partial_y f.
\end{align*}
The initial monomials of the Gr\"obner basis $G$ are
$\init_\prec(f) = y^l$, $\init_\prec(E) = x\xi$, $\init_\prec(P) = y^{l-1}\xi$.  
Hence for $k \geq l$ we obtain 
\begin{align*}&
\dim_K F_k(D_X)/(\Ann_{D_X} [f^{-1}] \cap F_k(D_X))
\\&
= \sharp (\{x^iy^j\xi^\mu\eta^\nu \mid i+j+\mu+\nu \leq k\}
\setminus \langle y^l, x\xi, y^{l-1}\xi \rangle)
\\&=
\sharp\{x^iy^j\eta^\nu \mid i+j+\nu \leq k,\, j \leq l-1\}
+
\sharp\{y^j\xi^\mu\eta^\nu \mid j+\mu+\nu \leq k,\, j \leq l-2,\,\mu \geq 1\}
\\&
= \sum_{j=0}^{l-1}\binom{2+k-j}{2} + \sum_{j=0}^{l-2}\binom{2+k-j-1}{2}
= \frac{2l-1}{2}k^2 + \cdots. 
\end{align*}

On the other hand, in case $m=l$ we have 
\begin{align*}&
\spoly_\prec(f,E) = l\partial_xf - x^{l-1}E = yP, \quad
\spoly_\prec(f,P) = ly^{l-1}\partial_xf - x^lP = y^l P + lx^{l-1}\partial_yf
\\&
\spoly_\prec(E,P) = y^{l-1}E - xP = l\partial_y f.
\end{align*}
The initial monomials are
$ \init_\prec(f) = x^l$, $\init_\prec(E) = x\xi$, $\init_\prec(P) = y^{l-1}\xi$. 
(Note that $y^{l-1}\xi \succ x^{l-1}\eta$ holds.)
Hence for $k \geq l$ we obtain
\begin{align*}&
\dim_K F_k(D_X)/(\Ann_{D_X} [f^{-1}] \cap F_k(D_X))
\\&
= \sharp (\{x^iy^j\xi^\mu\eta^\nu \mid i+j+\mu+\nu \leq k\}
\setminus \langle x^l, x\xi, y^{l-1}\xi \rangle)
\\&=
\sharp\{x^iy^j\eta^\nu \mid i+j+\nu \leq k,\, i \leq l-1\}
+
\sharp\{y^j\xi^\mu\eta^\nu \mid j+\mu+\nu \leq k,\, j \leq l-2,\,\mu \geq 1\}
\\&
= \sum_{i=0}^{l-1}\binom{2+k-i}{2} + \sum_{j=0}^{l-2}\binom{2+k-j-1}{2}
= \frac{2l-1}{2}k^2 + \cdots.
\end{align*}
Hence the multiplicity of $M$ is $2l$ in both cases. This proves the assertion. 
\end{proof}

\begin{proposition}
Set $X=K^2$ with $x_1 = x$ and $x_2 = y$. 
Set $f = x^m + y^l +1$ with positive integers $l,m$. 
Then the multiplicity of $K[x,y,f^{-1}]$ equals  
$lm + |l-m| + 1$.  
\end{proposition}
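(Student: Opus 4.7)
The plan is to follow the template of the preceding proposition: present $D_X \cdot [f^{-1}]$ as $D_X/\Ann_{D_X}[f^{-1}]$, compute a Gr\"obner basis of the annihilator with respect to a total-degree reverse-lexicographic order compatible with the Bernstein filtration, and read the Hilbert leading coefficient off the complement of the initial ideal. From the short exact sequence
\[
0 \longrightarrow K[x,y] \longrightarrow K[x,y,f^{-1}] \longrightarrow H^1_{(f)}(K[x,y]) \longrightarrow 0
\]
and additivity of multiplicity, $\mult K[x,y,f^{-1}] = 1 + \mult H^1_{(f)}(K[x,y])$, so it suffices to prove the latter equals $lm + |l-m|$. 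Since the gradient $(mx^{m-1}, ly^{l-1})$ of $f$ vanishes only at $(0,0)$, where $f(0,0) = 1 \neq 0$, the divisor $\{f=0\}$ is smooth, hence $b_f(s) = s+1$ and $H^1_{(f)}(K[x,y])$ is a simple holonomic $D_X$-module generated by $u := [f^{-1}]$. Direct computation shows that $f$ and the Hamiltonian operator $P := ly^{l-1}\partial_x - mx^{m-1}\partial_y$ both annihilate $u$, and since the zero set of $\langle \sigma(f), \sigma(P)\rangle$ in $K[x,y,\xi,\eta]$ is exactly the conormal bundle $T^*_{\{f=0\}}X = \Char H^1_{(f)}(K[x,y])$, the simplicity of $H^1_{(f)}(K[x,y])$ forces $\Ann_{D_X} u = D_Xf + D_XP$.

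Fix the total-degree reverse-lexicographic order with $x \succ y \succ \xi \succ \eta$, as in the previous proposition. Then $\init(f) = x^m$, and
\[
\init(P) = \begin{cases} y^{l-1}\xi, & m \leq l, \\ x^{m-1}\eta, & m > l. \end{cases}
\]
In the equal-exponent case $m = l$, a single S-pair reduction entirely analogous to the one carried out in the preceding proposition shows that $\{f, P\}$ is already a Gr\"obner basis; counting standard monomials $x^ay^b\xi^c\eta^d$ with $a \leq m-1$ and either $b \leq m-2$ or $c = 0$ yields Hilbert leading coefficient $m^2/2$, hence multiplicity $m^2 = lm$, matching the formula since $|l-m| = 0$. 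The mixed case $m \neq l$, say $m > l$, is more delicate: the S-pair $S(f, P)$ does not reduce to zero but produces a new Gr\"obner basis element with leading monomial $xy^{l-1}\xi$, and iterating the Buchberger algorithm generates further elements (for instance, the next S-pair yields a generator whose leading monomial is a product involving $\eta^2$). I would conjecture a closed-form description of the complete Gr\"obner basis — guided by Risa/Asir experiments as used throughout the paper — and then count standard monomials to obtain Hilbert leading coefficient $(lm + m - l)/2$, giving multiplicity $lm + |l-m|$. The case $m < l$ is symmetric by swapping the roles of $x$ and $y$.

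The principal obstacle will be the Buchberger analysis in the mixed case: a closed-form list of the Gr\"obner basis elements must be produced, termination must be justified, and the ensuing monomial count must be executed cleanly across the two regimes $m > l$ and $m < l$. A structural shortcut that sidesteps any explicit Gr\"obner basis is to use the identification $H^1_{(f)}(K[x,y]) \cong i_*\mathcal{O}_{\{f=0\}}$ (valid by smoothness of $\{f=0\}$) and equip this $D$-module direct image with a Bernstein filtration pulled back along a projection of the smooth affine curve $\{f=0\}$ onto a coordinate axis, thereby reducing the multiplicity computation to a one-variable Hilbert-function calculation in which the $|l-m|$ correction arises naturally from the disparity between the two possible projection degrees.
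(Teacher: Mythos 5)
Your overall strategy coincides with the paper's: present $D_X[f^{-1}]$ via the annihilator of $[f^{-1}]$ in $H^1_{(f)}(K[x,y])$, compute a Gr\"obner basis with respect to a total-degree order compatible with the Bernstein filtration, and read off the Hilbert leading coefficient. Your explicit use of additivity to reduce to $\mult H^1_{(f)}(K[x,y]) = lm + |l-m|$, and the simplicity/conormal-variety argument for $\Ann_{D_X}u = D_Xf + D_XP$, are correct variants of what the paper does (the paper simply cites smoothness of $\{f=0\}$). The equal-exponent case $m=l$ is handled correctly.

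The genuine gap is in the mixed case. You normalize to $m>l$ while keeping the order $x\succ y\succ\xi\succ\eta$, which makes $\init(f)=x^m$ and $\init(P)=x^{m-1}\eta$; with that convention the S-pair $S(f,P)$ does give a new element with initial monomial $xy^{l-1}\xi$, and you are right that Buchberger does not stop here: the monomial ideal $\langle x^m,\,x^{m-1}\eta,\,xy^{l-1}\xi\rangle$ already has cubic Hilbert growth (standard monomials with $a=0$, $c,d\geq 1$ are unconstrained), so it cannot be the full initial ideal of a holonomic module, and more elements are forced. But you then stop, appealing to conjectural experiments. The paper avoids this entirely by taking $m\leq l$ (the case compatible with the fixed order), where $\init(f)=y^l$, $\init(P)=y^{l-1}\xi$, a single new generator $Q=l\partial_xf-yP=l(x^m+1)\partial_x+mx^{m-1}y\partial_y+mlx^{m-1}$ with $\init(Q)=x^m\xi$ appears, and the remaining S-pairs $S(f,Q)$, $S(P,Q)$ reduce to zero; the Gr\"obner basis is exactly $\{f,P,Q\}$ and a short standard-monomial count gives $\tfrac{l+m(l-1)}{2}k^2+\cdots$, hence multiplicity $ml+l-m$. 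So the missing ingredient is simply to choose the compatible half of the dichotomy: with $m\leq l$ the Buchberger process terminates after one step, whereas with $m>l$ and the same order it does not, and you would need the (unwritten) larger basis. The concluding ``structural shortcut'' via $i_*\mathcal{O}_{\{f=0\}}$ and a pulled-back filtration is an appealing idea but is stated as a hope, not a proof, and does not close the gap on its own.
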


\begin{proof}
We may assume $m \leq l$. 
Set $M := H_{(f)}^1(K[x,y])$. 
Since the curve $f=0$ is non-singular, 
the $b$-function is $b_f(s) = s+1$. 
Hence $M$ is generated by $u:= [f^{-1}]$.  
The annihilator $\Ann_{D_X}u$ is generated by
$f$ and $P := ly^{l-1}\partial_x - mx^{m-1}\partial_y$ 
since $f=0$ is non-singular.

In case $l=m$, 
$G = \{f,P\}$ is a 
Gr\"obner basis of $\Ann_{D_X}[f^{-1}]$ with respect to a 
total-degree reverse lexicographic order $\prec$ such that 
$x \succ y \succ \xi \succ \eta$. 
In fact, we have
\[
\spoly_\prec(f,P) = ly^{l-1}\partial_xf - x^lP 
= ly^{l-1}\partial_x f + x^lP. 
\]
Since $\init_\prec(f) = x^l$ and $\init_\prec(P) = y^{l-1}\xi$, we have 
for $k \geq 2l$ 
\begin{align*}&
\dim_K F_k(D_X)/(\Ann_{D_X} [f^{-1}] \cap F_k(D_X))
\\&
= \sharp (\{x^iy^j\xi^\mu\eta^\nu \mid i+j+\mu+\nu \leq N\}
\setminus \langle x^l, y^{l-1}\xi \rangle)
\\&
= \sharp \{x^iy^j\eta^\nu \mid i+j+\nu \leq k,\, i \leq l-1\}
\\&\quad
+ \sharp \{x^iy^j\xi^\mu\eta^\nu \mid i+j+\mu+\nu \leq k,\, i \leq l-1,\,
0 \leq j \leq l-2,\,\mu\geq 1\}
\\&
= \sum_{i=0}^{l-1}\binom{2+k-i}{2}
+ \sum_{i=0}^{l-1}\sum_{j=0}^{l-2}\binom{2+k-i-j-1}{2}
= \frac{l^2}{2}k^2 + \cdots.
\end{align*}
In case $m < l$, the  Gr\"obner basis of $\Ann_D[f^{-1}]$ with respect to 
the same order as above 
is 
$G =\{f,P,Q\}$ with
\[
Q := l(x^m+1)\partial_x + mx^{m-1}y\partial_y + ml x^{m-1}.
\]
In fact, we have 
\begin{align*}&
\spoly_\prec(f,P) = l\partial_x f - yP = Q,
\\&
\spoly_\prec(f,Q) = lx^m\partial_x f - y^lQ
=  -mx^{m-1}y\partial_yf - yP + x^mQ,
\\&
\spoly_\prec(P,Q) = x^mP - y^{l-1}Q
= -mx^{m-1}\partial_y f - P. 
\end{align*}
Since $\init_\prec(f) = y^l$, $\init_\prec(P) = y^{l-1}\xi$, 
$\init_\prec(Q) = x^m\xi$, we have for $k \geq l+m$, 
\begin{align*}&
\dim_K F_k(D_X)/(\Ann_{D_X} [f^{-1}] \cap F_k(D_X))
\\&
= \sharp (\{x^iy^j\xi^\mu\eta^\nu \mid i+j+\mu+\nu \leq k\}
\setminus \langle y^l, y^{l-1}\xi, x^m\xi \rangle)
\\&
= \sharp \{x^iy^j\eta^\nu \mid i+j+\nu \leq k,\, i \leq l-1\}
\\&
+ \sharp \{x^iy^j\xi^\mu\eta^\nu \mid i+j+\mu+\nu \leq k,\, i \leq m-1,\,
j \leq l-2,\,\mu\geq 1\}
\\&
= \sum_{i=0}^{l-1}\binom{2+k-i}{2}
+ \sum_{i=0}^{m-1}\sum_{j=0}^{l-2}\binom{2+k-i-j-1}{2}
= \frac{l+m(l-1)}{2}k^2 + \cdots.
\end{align*}
Hence the multiplicity of $M$ is $l+m(l-1) = ml + l-m$. 
\end{proof}

Now let us resume the study on  $M(u,f,s)$ for a $D_X$-module $M = D_Xu$ and 
a polynomial $f$. 

\begin{lemma}
Let $M = D_Xu$ be a left $D_X$-module generated by $u$. 
For any $\lambda \in K$, the endomorphism of $M(u,f,s)$ defined by
$s-\lambda$ is injective. Hence the sequence
\[
0 \longrightarrow M(u,f,s) \stackrel{s-\lambda}{\longrightarrow} 
M(u,f,s) \longrightarrow M(u,f,\lambda) \longrightarrow 0 
\]
of left $D_X$-modules is exact. 
\end{lemma}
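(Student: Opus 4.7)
The plan is to reduce the injectivity of the endomorphism $s-\lambda$ on $M(u,f,s)$ to a simple $K[s]$-module structure observation about the ambient module $M\otimes_{K[x]}\Lsc$. Since $M(u,f,s)$ is a $D_X[s]$-submodule of $M\otimes_{K[x]}\Lsc$, and in particular a $K[s]$-submodule, it suffices to show that $s-\lambda$ acts injectively on the whole ambient module.

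First I would analyze $\Lsc = K[x,f^{-1},s]f^s$ as a $K[s]$-module. Because $s$ is a free indeterminate in the construction and acts only by multiplication on the coefficient ring $K[x,f^{-1},s]$ (not on $x_i$ or on the formal symbol $f^s$), there is a natural identification of $K[x]$-modules
\[
\Lsc \;=\; K[x,f^{-1}]\otimes_K K[s]\, f^s,
\]
with $s$ acting only on the second tensor factor. Tensoring with $M$ over $K[x]$ and commuting tensor products therefore yields
\[
M\otimes_{K[x]}\Lsc \;\cong\; M[f^{-1}]\otimes_K K[s]
\]
as $K[s]$-modules. The right-hand side is a free $K[s]$-module (on any $K$-basis of $M[f^{-1}]$), so it is in particular torsion-free over $K[s]$.

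Consequently multiplication by $s-\lambda$ is injective on $M\otimes_{K[x]}\Lsc$, hence on the $K[s]$-submodule $M(u,f,s)$. Combined with the definition $M(u,f,\lambda)=M(u,f,s)/(s-\lambda)M(u,f,s)$, this gives the exact sequence asserted in the lemma. The only point that requires any care is the identification of $\Lsc$ as a tensor product in which $s$ acts only on the $K[s]$-factor; everything else is formal. I expect no serious obstacle, since the $s$-action on $\Lsc$ and hence on $M\otimes_{K[x]}\Lsc$ commutes with the $K[x]$-action by construction, so the above decomposition is immediate from the definition of $\Lsc$.
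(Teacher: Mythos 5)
Your proof is correct, and it takes a genuinely different and in fact shorter route than the paper's. You show directly that the ambient module $M\otimes_{K[x]}\Lsc$ is $K[s]$-torsion-free by writing $\Lsc$, as a $K[x,s]$-module, as $K[x,f^{-1}]\otimes_K K[s]$ (the $s$-action on $\Lsc$ is plain multiplication on the coefficient ring and commutes with $K[x]$, so the decomposition is legitimate); tensoring with $M$ over $K[x]$ then gives $M\otimes_{K[x]}\Lsc\cong M[f^{-1}]\otimes_K K[s]$, which is free over $K[s]$, and injectivity of $s-\lambda$ on the $K[s]$-submodule $M(u,f,s)$ follows. The paper instead first reduces to the case where $M$ is $f$-saturated (using the lemma $M(u,f,s)\cong\iota(M)(\iota(u),f,s)$), invokes the injectivity of $\psi:M\otimes_{K[x]}B_{Z|Y}\to M\otimes_{K[x]}\Lsc$ from Lemma \ref{lemma:psi} to identify $M(u,f,s)$ with $M\otimes_{K[x]}B_{Z|Y}$, and then carries out an explicit coefficient computation in the basis $\delta^{(j)}(t-f)$, using $f$-saturation to conclude all coefficients vanish. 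Your argument bypasses both the reduction and the basis computation; what the paper's approach buys is a concrete description in the $B_{Z|Y}$ picture that it reuses in the surrounding material, but for the injectivity claim alone your tensor-product observation is cleaner and requires no hypothesis beyond $M$ being a $D_X$-module.
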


\begin{proof}
We may assume that $M$ is $f$-saturated as was seen in the previous section. 
Then the homomorphism 
$\psi : M \otimes_{K[x]}B_{Z|Y} \rightarrow M \otimes_{K[x]}\Lsc$ 
is injective by Lemma \ref{lemma:psi}. 

Hence we have only to show that 
$s-\lambda = -\partial_tt-\lambda$ is an injective endomorphism of 
$M \otimes_{K[x]}B_{Z|Y}$.  
Let 
\[
v = \sum_{j=0}^k v_j \otimes \delta^{(j)}(t-f)
\]
be an arbitrary element of $M \otimes_{K[x]}B_{Z|Y}$ with 
$k \in \N$ and $v_j \in M$. Then we get 
\begin{align*}
(s-\lambda)v &= -\sum_{j=0}^k v_j\otimes(t\partial_t + \lambda+1)
\delta^{(j)}(t-f) 
\\&=
-\sum_{j=0}^k v_j\otimes(f\delta^{(j+1)}(t-f) + (\lambda - j)\delta^{(j)}(t-f))
\\&
=-\lambda v_0 \otimes \delta(t-f)
 -\sum_{j=1}^k (fv_{j-1} + (\lambda- j)v_j)\otimes \delta^{(j)}(t-f) 
- fv_k\otimes \delta^{(k+1)}(t-f). 
\end{align*}
Thus $(s-\lambda)v=0$ is equivalent to 
\[
\lambda v_0 = fv_k = fv_{j-1} + (\lambda-j)v_j = 0 \quad (1 \leq j \leq k), 
\]
which implies $v_k = v_{k-1} = \cdots = v_0=0$ since $M$ is $f$-saturated.  
\end{proof}


\begin{theorem}
Let $f \in K[x]$ be a non-constant polynomial. 
Let $M = D_Xu$ be a left $D_X$-module generated by $u$ which is 
holonomic on $X_f := \{x \in X \mid f(x) \neq 0\}$. 
Then $M(u,f,\lambda)$ and $M(u,f,s)/tM(u,f,s)$ are holonomic $D_X$-modules 
for any $\lambda \in K$. 
\end{theorem}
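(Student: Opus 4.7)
The plan is to reduce to the case where $M$ is holonomic and $f$-saturated, then identify $M(u,f,s)$ with the holonomic $D_Y$-module $M\otimes_{K[x]}B_{Z|Y}$ via Theorem \ref{th:J'} and Lemma \ref{lemma:psi}, and finally deduce $D_X$-holonomicity of the two quotients by a Bernstein-filtration dimension count that exploits injectivity of the operators one quotients by.

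First I would replace $M$ by $\iota(M)\subseteq M[f^{-1}]$. By the lemma preceding the algorithm for $M(u,f,s)$ in Section 3, the canonical map $\iota$ induces an isomorphism $M(u,f,s)\cong \iota(M)(u,f,s)$ of $D_X[s]$-modules, which descends both to $M(u,f,\lambda)\cong \iota(M)(u,f,\lambda)$ and to $M(u,f,s)/tM(u,f,s)\cong \iota(M)(u,f,s)/t\iota(M)(u,f,s)$. The replacement makes $M$ both $f$-saturated (as a submodule of $M[f^{-1}]$) and holonomic as a $D_X$-module: by Theorem \ref{th:Jholonomic} the $D_Y$-module $D_Y/J$ from Section 2 is holonomic, so its $\partial_t$-integration $M[f^{-1}]$ is a holonomic $D_X$-module, and $\iota(M)\subset M[f^{-1}]$ is a holonomic submodule.

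With $M$ now holonomic and $f$-saturated, $N:=M\otimes_{K[x]}B_{Z|Y}$ is a holonomic $D_Y$-module by Theorem \ref{th:J'}, and Lemma \ref{lemma:psi} identifies $\psi:N\to M(u,f,s)$ as an isomorphism of $D_Y$-modules. Under $\psi$, the action of $s$ on $M(u,f,s)$ corresponds to $-\partial_t t$ on $N$, so $s-\lambda$ lies in $F_2(D_Y)$ and $t\in F_1(D_Y)$ in the Bernstein filtration of $D_Y$. The preceding lemma gives injectivity of $s-\lambda$ on $N$; injectivity of $t$ follows from the identity $t\delta^{(k)}(t-f)=f\delta^{(k)}(t-f)-k\delta^{(k-1)}(t-f)$, because in an expansion $w=\sum_{k=0}^K v_k\otimes \delta^{(k)}(t-f)$ with $v_K\ne 0$ the top coefficient of $tw$ is $fv_K$, which is non-zero by $f$-saturation. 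Now fix a good $D_Y$-filtration $\{F_k(N)\}$, so $\dim_K F_k(N)=ck^{n+1}/(n+1)!+O(k^n)$. The inclusions $(s-\lambda)F_{k-2}(N)\subseteq F_k(N)\cap (s-\lambda)N$ and $tF_{k-1}(N)\subseteq F_k(N)\cap tN$, combined with injectivity, yield
\[
\dim_K F_k(N)/(F_k(N)\cap (s-\lambda)N)\leq \dim_K F_k(N)-\dim_K F_{k-2}(N)=O(k^n),
\]
and the analogous $O(k^n)$ bound for the $t$-quotient.

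Both quotients are finitely generated as $D_X$-modules: $M(u,f,\lambda)$ is cyclic, while $M(u,f,s)/tM(u,f,s)$ is generated over $D_X$ by the classes of $s^j(u\otimes f^s)$ for $0\leq j<\deg b_{u,f}$, using Kashiwara's Theorem 1.1 for the existence of $b_{u,f}$ on $X_f$. The filtrations pushed forward from $F_\bullet(N)$ along the quotient maps are compatible with $F_\bullet(D_X)$, and by a standard argument they dominate a good $D_X$-filtration on each finitely generated quotient up to a uniform shift in the filtration index, bounding the Bernstein dimension by $n$; Bernstein's inequality then forces each quotient to be either zero or holonomic. The main obstacle here is precisely this last filtration-comparison: the filtration pushed forward from a good $D_Y$-filtration is not itself a good $D_X$-filtration (it uses the $D_Y$-action in its construction), so one must use finite generation of the quotient to select $D_X$-generators inside some $F_{k_0}(N)$ and thereby sandwich a good $D_X$-filtration between the pushed-forward one and its bound.
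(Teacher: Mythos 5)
Your proposal is correct, but it takes a genuinely different route from the paper at one key point. Both you and the paper begin by replacing $M$ with $\iota(M)$ to reduce to the case of a holonomic $f$-saturated module, both identify $M(u,f,s)$ with the holonomic $D_Y$-module $N = M\otimes_{K[x]}B_{Z|Y}$ via $\psi$, and both work with a good Bernstein filtration on $N$ as a $D_Y$-module. Where you diverge is in how you pass from the $O(k^n)$ growth of the pushed-forward filtration on the quotient to holonomicity as a $D_X$-module. You first establish that $M(u,f,s)/tM(u,f,s)$ is finitely generated over $D_X$ by invoking Theorem 1.1 (existence of $b_{u,f}$) and the fact that $b_{u,f}(s)$ is the minimal polynomial of $s$ on this quotient, then sandwich a good $D_X$-filtration under the pushed-forward one. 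The paper instead \emph{deliberately avoids finite generation} (it says explicitly ``we do not know at this stage whether $N'$ is finitely generated over $D_X$ or not''); it replaces the $D_Y$-filtration by a good $D_X[s]$-filtration with $s$ of weight $2$, invokes a commutative-algebra result to obtain a parity-dependent pair of Hilbert polynomials $q_1,q_2$ of the same degree $d\leq n+1$, and then deduces that every finitely generated $D_X$-submodule of the quotient is holonomic with uniformly bounded multiplicity, from which holonomicity of the whole quotient follows. What the paper's route buys is independence from the existence of the $b$-function; since Theorems 1.1 and 1.2 are both due to Kashiwara and in some treatments derived from one another, the author is plainly careful to keep the holonomicity proof free of that input. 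What your route buys is simplicity: by staying on the $D_Y$ side until the very end you sidestep both the quasi-polynomial Hilbert-function argument and the delicate business of estimating the weight of $t$ with respect to a $D_X[s]$-filtration (note that $t$ is \emph{not} an element of $D_X[s]$, so the inclusion $t^2G_{k-2}\subseteq G_k$ used in the paper actually requires a $\deg f$-dependent shift; this does not affect the degree count, but it is a wrinkle you avoid entirely since $t\in F_1(D_Y)$). Your use of the $b$-function is legitimate given that Theorem 1.1 is stated earlier in the paper, so there is no logical circularity, merely a weaker claim to self-containment than the paper achieves.
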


\begin{proof}
Since $M(u,f,s) = \iota(M)(\iota(u),f,s)$, we may assume $M$ to be a nonzero 
holonomic $D_X$-module and $f$-saturated
replacing $M$ by $\iota(M)$. 
Since $N := M\otimes_{K[x]}B_{Z|Y}$ is holonomic, 
there exists a good Bernstein filtration $\{F_k(N)\}$ on $N$ and 
a polynomial $p(k)$ of degree $n+1$ such that 
$p(k) = \dim_K F_k(N)$ if $k$ is sufficiently large. 
Then $F_k(M(u,f,s)) := F_k(N) \cap M(u,f,s)$ is a filtration on 
$M(u,f,s)$ with respect to the weight vector $(1,\dots,1,2)$ 
for $(x,\partial_x,s)$.  

On the other hand, applying  a  well-known fact in commutative algebra 
(e.g., Theorem 4.4.3 in \cite{BH}) to the graded module, 
we can show that there exist a good filtration 
$\{G_k(M(u,f,s))\}$ on $M(u,f,s)$ with respect to the weight vector above, 
and two polynomials $q_1(k)$ and $q_2(k)$ of the same degree $d$ such that 
\[
\dim_K G_{2k}(M(u,f,s)) = q_1(2k), \quad
\dim_K G_{2k+1}(M(u,f,s)) = q_2(2k+1) 
\quad (\forall k \gg 0). 
\]
There exists $k_0\in\Z$ such that $G_k(M(u,f,s)) \subset F_{k+k_0}(M(u,f,s))$ 
for any $k \in \Z$. This implies that $d \leq n+1$. 

Set $N' = M(u,f,s)/tM(u,f,s)$ and 
\[
G_k(N') = G_k(M(u,f,s))/(tM(u,f,s) \cap G_k(M(u,f,s)).
\]
Then $\{G_k(N')\}$ constitutes a Bernstein filtration on the 
left $D_X$-module $N'$ (i.e., ignoring the action of $s$). 
Here note that we do not know at this stage whether $N'$ is 
finitely generated over $D_X$ or not. 

Since $t : M(u,f,s) \rightarrow M(u,f,s)$ 
is injective, we have
\begin{align*}
\dim_K G_k(N') &= \dim_K G_k(M(u,f,s)) - \dim_K (tM(u,f,s) \cap G_k(M(u,f,s))
\\&
\leq \dim_K G_k(M(u,f,s)) - \dim_K t^2G_{k-2}(M(u,f,s))
\\&
= \dim_K G_k(M(u,f,s)) - \dim_K G_{k-2}(M(u,f,s))
\\&
= \left\{ \begin{array}{ll}
q_1(k) - q_1(k-2) & \mbox{ if $k \gg 0$ is even)} \\
q_2(k) - q_2(k-2) & \mbox{ if $k \gg 0$ is odd)}
\end{array}\right.
\end{align*}
Since the degree of $q_i(k) - q_i(k-2)$ ($i=1,2$) is $d-1 \leq n$, 
this inequality implies that an arbitrary finitely generated $D_X$-submodule 
of $N'$ is holonomic and its multiplicity is bounded in terms of 
the leading coefficients of $q_1(k)$ and $q_2(k)$. 
Hence we conclude that $N'$ itself is holonomic. 

We can prove the holonomicity of  $M(u,f,\lambda)$, which is generated by 
$(u\otimes f^s)|_{s=\lambda}$,  in the same way 
replacing $t^2$ by $s-\lambda$ 
since $s-\lambda$ is an injective endomorphism of $M(u,f,s)$. 
\end{proof}

The first statement of the following theorem is given in 6.5  of \cite{KashiwaraAlg} 
for the case $M = K[x]$ and $u=1$. 

\begin{theorem}\label{th:phi}
Let  $M = D_Xu$ be a $D_X$-module generated by $u$ and $f\in K[x]$ be 
a non-constant polynomial. 
Assume that the $b$-function $b_{u,f}(s)$ exists. 
Let $\lambda$ be an arbitrary element of $K$ and 
define the $D_X$-homomorphism 
$\varphi_\lambda : M(u,f,\lambda+1) \rightarrow M(u,f,\lambda)$ by 
$\varphi_\lambda(P((u\otimes f^s)|_{s=\lambda+1})
= P(f(u\otimes f^s)|_{s=\lambda})$ for $P \in D_X$. 
\begin{enumerate}
\item
The following conditions are equivalent:
\begin{enumerate}
\item
$b_{u,f}(\lambda) \neq 0$
\item
$\varphi_\lambda : M(u,f,\lambda+1) \rightarrow M(u,f,\lambda)$ is 
an isomorphism. 
\end{enumerate}
\item Assume that $M$ is holonomic on $X_f$. Then one has 
\[
\mult M(u,f,\lambda + k) = \mult M(u,f,\lambda), 
\qquad
\length M(u,f,\lambda + k) = \length M(u,f,\lambda)
\]
for any $\lambda \in K$ and any integer $k$. 
In particular, one has
\[
\mult M[f^{-1}] = \mult M(u,f,k),
\qquad
\length M[f^{-1}] = \length M(u,f,k)
\]
for any integer $k$. 
\end{enumerate}
\end{theorem}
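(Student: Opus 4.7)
My plan is to realize $\varphi_\lambda$ as the $D_X$-map induced on specializations by the shift automorphism $t$ of $M\otimes_{K[x]}\Lsc$. Since $t(u\otimes f^s) = u\otimes f^{s+1} = f(u\otimes f^s)$ and $(s-\lambda)t = t(s-\lambda-1)$, the restriction of $t$ to $M(u,f,s)$ descends to $\varphi_\lambda$, and both parts of the theorem will be attacked through this identification.

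For (1), assume first $b_{u,f}(\lambda)\neq 0$. Surjectivity is immediate from the defining identity $b_{u,f}(s)(u\otimes f^s) = P_0(s)f(u\otimes f^s)$: specializing at $s=\lambda$ and dividing by $b_{u,f}(\lambda)$ exhibits $(u\otimes f^s)|_{s=\lambda}$ as an element of the image of $\varphi_\lambda$, and this element generates $M(u,f,\lambda)$ over $D_X$. For injectivity, given $v = R(s)(u\otimes f^s)$ with $tv = (s-\lambda)Q(s)(u\otimes f^s)$, I apply $t^{-1}$ inside $M\otimes_{K[x]}\Lsc$ to obtain $v = (s-\lambda-1)Q(s-1)(u\otimes f^{s-1})$, multiply both sides by $b_{u,f}(s-1)$, and use identity \eqref{eq:shift} with $k=1$ to replace $b_{u,f}(s-1)(u\otimes f^{s-1})$ by an element of $M(u,f,s)$. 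This yields $b_{u,f}(s-1)v \in (s-\lambda-1)M(u,f,s)$; the Taylor expansion $b_{u,f}(s-1) = b_{u,f}(\lambda) + (s-\lambda-1)h(s)$ then forces $b_{u,f}(\lambda)v \in (s-\lambda-1)M(u,f,s)$, so $[v]=0$. Conversely, if $b_{u,f}(\lambda)=0$ with $b_{u,f}(s) = (s-\lambda)\tilde b(s)$, then any equation $(1-Af)(u\otimes f^s)|_{s=\lambda}=0$ lifts to $(1-Af)(u\otimes f^s) = (s-\lambda)B(s)(u\otimes f^s)$; multiplying by $\tilde b(s)$ and substituting $b_{u,f}(s)(u\otimes f^s) = P_0(s)f(u\otimes f^s)$ produces $\tilde b(s)(u\otimes f^s) \in D_X[s]f(u\otimes f^s)$, contradicting the minimality of $b_{u,f}$ and ruling out surjectivity of $\varphi_\lambda$.

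For (2), I apply the snake lemma to the commutative ladder whose rows are $0\to M(u,f,s)\xrightarrow{s-\lambda-1}M(u,f,s)\to M(u,f,\lambda+1)\to 0$ and $0\to M(u,f,s)\xrightarrow{s-\lambda}M(u,f,s)\to M(u,f,\lambda)\to 0$, with vertical maps $t,t,\varphi_\lambda$ commuting by virtue of $(s-\lambda)t = t(s-\lambda-1)$. Injectivity of $t$ on $M(u,f,s)$ (it is the restriction of an automorphism of $M\otimes_{K[x]}\Lsc$) collapses the snake sequence to
\[
0 \longrightarrow \ker\varphi_\lambda \longrightarrow N' \xrightarrow{\,s-\lambda\,} N' \longrightarrow \mathrm{coker}\,\varphi_\lambda \longrightarrow 0, \qquad N' := M(u,f,s)/tM(u,f,s).
\]
The theorem immediately preceding this one guarantees that $N'$ is holonomic, so additivity of multiplicity (resp.\ length) on this four-term exact sequence forces $\mult\ker\varphi_\lambda = \mult(\mathrm{coker}\,\varphi_\lambda)$ and the analogous length equality. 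Combined with additivity on the two short exact sequences extracted from the image of $\varphi_\lambda$, this gives $\mult M(u,f,\lambda+1) = \mult M(u,f,\lambda)$ and the same equality for length, for every $\lambda\in K$. Iterating supplies the first claimed equality of (2) for every integer $k$. For the second, I pick $k$ sufficiently negative that $b_{u,f}(k-j)\neq 0$ for all $j\geq 1$; Proposition \ref{prop:fs1} then identifies $M(u,f,k)$ with $M\otimes_{K[x]}K[x,f^{-1}]f^k$, which for integer $k$ is canonically isomorphic to $M[f^{-1}]$, and the shift-invariance just established propagates the equality to every integer.

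The main obstacle will be the bookkeeping of shifts in the injectivity half of (1); the snake lemma setup in (2) is routine once the commutation $(s-\lambda)t = t(s-\lambda-1)$ is recorded, after which everything follows from standard additivity.
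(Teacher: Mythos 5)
Your proof is correct. For part (2) your argument is essentially the paper's: you apply the snake lemma to the transpose of the same commutative ladder (the paper takes the sequences $0\to M(u,f,s)\xrightarrow{t}M(u,f,s)\to N'\to 0$ as rows with $s-\lambda-1,\,s-\lambda,\,s-\lambda$ as vertical maps, whereas you take the $s-\lambda-1$ and $s-\lambda$ short exact sequences as rows with $t,\,t,\,\varphi_\lambda$ vertically); in either orientation the four-term exact sequence identifies $\ker\varphi_\lambda$ and $\mathrm{coker}\,\varphi_\lambda$ with $\ker(s-\lambda)$ and $\mathrm{coker}(s-\lambda)$ on $N' := M(u,f,s)/tM(u,f,s)$, and additivity on the holonomic module $N'$ closes the count, and the passage to $M[f^{-1}]$ via Proposition \ref{prop:fs1} matches the paper as well. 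Part (1) is where you genuinely diverge. The paper reads the equivalence off that same four-term sequence: it uses a Bezout identity $a(s)(s-\lambda)+c(s)b_{u,f}(s)=1$ to show $b_{u,f}(\lambda)\neq 0$ makes $s-\lambda$ an automorphism of $N'$, and uses that $b_{u,f}$ is the minimal polynomial of $s$ on $N'$ for the converse, then concludes $\varphi_\lambda$ is an isomorphism precisely when the end terms of the exact sequence vanish. You instead work directly in $M(u,f,s)$: surjectivity by specializing the identity $b_{u,f}(s)(u\otimes f^s)=P_0(s)f(u\otimes f^s)$ at $s=\lambda$, injectivity by applying $t^{-1}$, multiplying by $b_{u,f}(s-1)$, using \eqref{eq:shift} with $k=1$ and the expansion $b_{u,f}(s-1)=b_{u,f}(\lambda)+(s-\lambda-1)h(s)$, and the converse by showing that surjectivity would put $b_{u,f}(s)/(s-\lambda)$ into $I(u,f)$, contradicting minimality (this last step is, in effect, the mechanism of Lemma \ref{lemma:fs2} transplanted to $M(u,f,\lambda)$). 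Your version of (1) is more computational but independent of the snake-lemma output; the paper's is more economical since the exact sequence is needed for (2) anyway, and it isolates the conceptual fact that $b_{u,f}$ is the minimal polynomial of $s$ on $N'$. Both are valid and rest on the same small set of prior lemmas.
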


\begin{proof}
There exists a commutative diagram
\[
\xymatrix{
&&& 0 \ar[d] & \\
& 0 \ar[d] & 0 \ar[d] &  \Ksc_0 \ar[d] & \\ 
0 \ar[r] & M(u,f,s)\ar[r]^{t} \ar[d]^{s-\lambda-1} &  
M(u,f,s)\ar[r] \ar[d]^{s-\lambda} & M(u,f,s)/tM(u,f,s)\ar[r] 
\ar[d]^{s-\lambda}& 0
\\
0 \ar[r] & M(u,f,s)\ar[r]^{t} \ar[d] &  
M(u,f,s)\ar[r] \ar[d] & M(u,f,s)/tM(u,f,s)\ar[r] \ar[d] & 0
\\
 & M(u,f,\lambda+1) \ar[r]^{\varphi_\lambda} \ar[d] &  
M(u,f,\lambda) \ar[r] \ar[d] & \Ksc_1 \ar[d]
\\
& 0 & 0 & 0 &
}\]
of left $D_X$-modules, where the three vertical sequences 
and the upper two horizontal sequences are exact. 
Hence by the snake lemma we obtain an exact sequence 
\begin{equation}\label{eq:exact}
0 \longrightarrow \Ksc_0 \longrightarrow M(u,f,\lambda+1)
\stackrel{\varphi_\lambda}{\longrightarrow} M(u,f,\lambda) 
\longrightarrow \Ksc_1 \longrightarrow 0  
\end{equation}
of left $D_X$-modules. 

(1)
Assume $b_{u,f}(\lambda) \neq 0$. Then there exist 
$a(s),c(s) \in K[s]$ such that $a(s)(s-\lambda) + c(s)b_{u,f}(s)$ = 1. 
Hence for any $Q(s) \in D_X[s]$,
\[
Q(s)(u\otimes f^s) = Q(s)c(s)b_{u,f}(s)(u\otimes f^s) 
 + (s-\lambda)Q(s)a(s)(u\otimes f^s) 
\]
belongs to $tM(u,f,s) + (s-\lambda)M(u,f,s)$.   
If $(s-\lambda)Q(s)(u\otimes f^s)$ belongs to $tM(u,f,s)$, then 
\[
Q(s)(u\otimes f^s) = a(s)(s-\lambda)Q(s)(u\otimes f^s) 
  + Q(s)c(s)b_{u,f}(s)(u\otimes f^s)
\]
belongs to $tM(u,f,s)$. Hence $s-\lambda$ is an automorphism of 
$M(u,f,s)/tM(u,f,s)$. 

Conversely, assume that $s-\lambda$ is an automorphism of 
$M(u,f,s)/tM(u,f,s)$. Then the minimal polynomial $b_{u,f}(s)$ of $s$ 
on this module cannot be a multiple of $s-\lambda$. 
Summing up we have shown that $b_{u,f}(\lambda) \neq 0$ if and only if 
$\Ksc_0 = \Ksc_1 = 0$. 
In view of the exact sequence (\ref{eq:exact}), this is also equivalent 
to $\varphi_\lambda$ being an isomorphism.

(2)
We may assume that $M$ is a holonomic $D_X$-module and that 
$M$ is $f$-saturated replacing  $M$ by $\iota(M)$. 
Since $M(u,f,s)/tM(u,f,s)$ is holonomic, 
the length (and the multiplicity) of $\Ksc_0$ and the length (and 
the multiplicity respectively)  of $\Ksc_1$ are the same 
in view of the rightmost vertical exact sequence.  
Combined with this fact the exact sequence (\ref{eq:exact}) 
proves the statement (2). 
\end{proof}

This theorem provides us with an algorithm to compute the multiplicity of 
$M[f^{-1}]$ without any information on $b_{u,f}(s)$; thus we have only
to compute a Gr\"obner basis, e.g., of $M(u,f,0)$ with respect to 
a term order compatible with the Bernstein filtration. 

\begin{theorem}\label{th:rho}
The homomorphism $\tilde\rho_\lambda : 
M(u,f,\lambda) \rightarrow D_X(u\otimes f^\lambda)$ 
is an isomorphism if and only if $b_{u,f}(\lambda - k) \neq 0$ 
for any positive integer $k$. 
\end{theorem}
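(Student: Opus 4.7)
The ``if'' direction of the biconditional is already Proposition \ref{prop:fs1}(2); the task is the converse. Since $\tilde\rho_\lambda$ is surjective, this reduces to showing that $\ker\tilde\rho_\lambda\neq 0$ whenever some $b_{u,f}(\lambda-k)=0$ with $k\geq 1$. My plan is to extract the conclusion from a length computation on the exact sequence
\[
0 \longrightarrow \ker\tilde\rho_\lambda \longrightarrow M(u,f,\lambda) \stackrel{\tilde\rho_\lambda}{\longrightarrow} D_X(u\otimes f^\lambda) \longrightarrow 0,
\]
combined with Theorem \ref{th:gen}.

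The main preparatory step is to prove the identity $\length M(u,f,\lambda) = \length M\otimes_{K[x]}K[x,f^{-1}]f^\lambda$. I would start with the observation that for any $\mu\in K$, multiplication by $f$ defines a $D_X$-module isomorphism
\[
M\otimes_{K[x]}K[x,f^{-1}]f^{\mu+1} \stackrel{\sim}{\longrightarrow} M\otimes_{K[x]}K[x,f^{-1}]f^\mu,
\]
which is a routine Leibniz-rule check against the action formula on $\Lsc$ given in Section~1. Consequently $\length M\otimes_{K[x]}K[x,f^{-1}]f^\mu$ is invariant under integer shifts of $\mu$. Now choose $k\in\N$ large enough that $b_{u,f}(\lambda-k-j)\neq 0$ for every $j\geq 1$; Proposition \ref{prop:fs1} then yields $M(u,f,\lambda-k)\cong D_X(u\otimes f^{\lambda-k})=M\otimes_{K[x]}K[x,f^{-1}]f^{\lambda-k}$. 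In particular $M\otimes_{K[x]}K[x,f^{-1}]f^{\lambda-k}$, and hence $M\otimes_{K[x]}K[x,f^{-1}]f^\lambda$ via the shift isomorphism, are holonomic of finite length. Combining these identifications with Theorem \ref{th:phi}(2), which gives $\length M(u,f,\lambda)=\length M(u,f,\lambda-k)$, produces the desired length identity.

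Granting the identity, length additivity on the displayed exact sequence shows $\ker\tilde\rho_\lambda=0$ if and only if $\length D_X(u\otimes f^\lambda)=\length M\otimes_{K[x]}K[x,f^{-1}]f^\lambda$; since $D_X(u\otimes f^\lambda)$ is a finite-length submodule of $M\otimes_{K[x]}K[x,f^{-1}]f^\lambda$, this is equivalent to $D_X(u\otimes f^\lambda)=M\otimes_{K[x]}K[x,f^{-1}]f^\lambda$, which by Theorem \ref{th:gen} is exactly the condition $b_{u,f}(\lambda-k)\neq 0$ for all $k\geq 1$. The main obstacle I anticipate is the bookkeeping in the Leibniz verification and in stitching the shift isomorphism together with Theorem \ref{th:phi}(2) at the level of non-integer exponents; once the length identity is in place, the rest is formal.
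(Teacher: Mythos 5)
Your proof is correct, and it repackages the paper's argument in a way worth noting. The paper's proof attacks the ``only if'' direction directly: if $b_{u,f}(\lambda - k_0) = 0$ with $k_0$ maximal, it invokes Lemma \ref{lemma:fs2} at level $\lambda - k_0$ to produce the strict inclusion $D_X(u\otimes f^{\lambda-k_0+1}) \subsetneqq D_X(u\otimes f^{\lambda-k_0})$, and then chains through $\length M(u,f,\lambda) = \length M(u,f,\lambda-k_0) = \length D_X(u\otimes f^{\lambda-k_0}) > \length D_X(u\otimes f^{\lambda-k_0+1}) \geq \length D_X(u\otimes f^\lambda)$, the first equality being Theorem \ref{th:phi}(2). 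You instead establish the standalone length identity $\length M(u,f,\lambda) = \length M\otimes_{K[x]}K[x,f^{-1}]f^\lambda$ by shifting far enough down so Proposition \ref{prop:fs1} applies, using Theorem \ref{th:phi}(2) on the source side and the routine $D_X$-isomorphism $f\colon M\otimes K[x,f^{-1}]f^{\mu+1}\to M\otimes K[x,f^{-1}]f^\mu$ on the target side; you then let Theorem \ref{th:gen} (which already encapsulates the use of Lemma \ref{lemma:fs2}) convert the generation condition into the root condition. Both arguments use exactly the same three ingredients (Proposition \ref{prop:fs1}, Lemma \ref{lemma:fs2}, Theorem \ref{th:phi}(2)), but your version gives the biconditional in one stroke from the length identity, and that identity is a reusable fact the paper never isolates. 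One cosmetic remark: since you derive both directions from the length equality via Theorem \ref{th:gen}, the opening sentence appealing to Proposition \ref{prop:fs1}(2) for the ``if'' direction is redundant. Also, you should state explicitly (as the paper does in the sentence introducing Theorem \ref{th:rho}) that $M$ is assumed holonomic on $X_f$, since Theorem \ref{th:phi}(2) and the finite-length claims depend on it.
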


\begin{proof}
If $b_{u,f}(\lambda - k) \neq 0$ for any positive integer $k$, 
then $\tilde\rho_\lambda$ is an isomorphism by virtue of Proposition 
\ref{prop:fs1}. 
Now suppose $b_{u,f}(\lambda - k) = 0$ holds for some positive 
integer $k$ and let $k_0$ be the maximum among such $k$. 
Then Proposition \ref{prop:fs1} and Lemma \ref{lemma:fs2} imply that 
$\tilde\rho_{\lambda-k_0}$ is an isomorphism and that  
$D_X(u\otimes f^{\lambda-k_0+1}) \subsetneqq D_X(u\otimes f^{\lambda - k_0})$.
Hence by (2) of Theorem \ref{th:phi} we have
\begin{align*}
\length M(f,u,\lambda) 
&= \length M(f,u,\lambda - k_0) 
= \length D_X(u\otimes f^{\lambda - k_0})
\\&
> \length D_X(u\otimes f^{\lambda - k_0+1})
\geq \length D_X(u\otimes f^{\lambda}).
\end{align*}
Thus $\rho_\lambda$ is not an isomorphism. 
\end{proof}

\begin{corollary}
$M(u,f,\lambda)$ is $f$-saturated if and only if 
$b_{u,f}(\lambda - k) \neq 0$ for any positive integer $k$. 
In general, $\iota(M(u,f,\lambda))$ is isomorphic to 
$D_X(u\otimes f^\lambda)$. 
\end{corollary}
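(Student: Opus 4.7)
The plan is to deduce both assertions simultaneously from the key identification
\[
\ker\tilde\rho_\lambda = H^0_{(f)}(M(u,f,\lambda)).
\]
Granted this, the second assertion becomes the factorization $\iota(M(u,f,\lambda)) = M(u,f,\lambda)/H^0_{(f)}(M(u,f,\lambda)) = M(u,f,\lambda)/\ker\tilde\rho_\lambda \cong D_X(u\otimes f^\lambda)$, while the first assertion reads: $M(u,f,\lambda)$ is $f$-saturated iff $H^0_{(f)}(M(u,f,\lambda))=0$ iff $\tilde\rho_\lambda$ is an isomorphism, which by Theorem \ref{th:rho} is equivalent to $b_{u,f}(\lambda-k)\neq 0$ for every positive integer $k$.

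The inclusion $H^0_{(f)}(M(u,f,\lambda)) \subseteq \ker\tilde\rho_\lambda$ is immediate because $D_X(u\otimes f^\lambda)$ sits inside the $K[x,f^{-1}]$-module $M\otimes_{K[x]}K[x,f^{-1}]f^\lambda$ and is hence $f$-saturated. For the reverse inclusion I first reduce to the case where $M$ itself is $f$-saturated, using the identification $M(u,f,s) \cong \iota(M)(u,f,s)$ from Section 2. Given $v = [Q(s)(u\otimes f^s)] \in \ker\tilde\rho_\lambda$ with $Q(s) \in D_X[s]$, I would imitate the decomposition at the start of the proof of Proposition \ref{prop:fs1}(2):
\[
Q(s)(u\otimes f^s) = \sum_{j\geq 0}(Q_ju)\otimes(s-\lambda)^j f^{s-l}, \qquad Q_j \in D_X,\ l \in \N.
\]
The vanishing $(Q_0 u)\otimes f^{\lambda-l} = \rho_\lambda(Q(s)(u\otimes f^s)) = 0$ together with $f$-saturatedness of $M$ forces $Q_0 u = 0$, so $Q(s)(u\otimes f^s) = (s-\lambda)w$ with $w := \sum_{j\geq 1}(Q_j u)\otimes(s-\lambda)^{j-1}f^{s-l} \in M\otimes_{K[x]}\Lsc$. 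It then suffices to exhibit $N\in\N$ with $f^N w \in M(u,f,s)$, since then $f^N Q(s)(u\otimes f^s) = (s-\lambda)f^N w \in (s-\lambda)M(u,f,s)$, i.e.\ $f^N v = 0$ in $M(u,f,\lambda)$.

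The main obstacle---and essentially the only nonroutine step---is producing such an $N$. For this I will prove the following absorption claim by induction on $d$: \emph{for every $P \in D_X$ of order at most $d$, the element $(Pu)\otimes f^{s+d}$ lies in $M(u,f,s)$.} The base case $d=0$ is immediate since $(Pu)\otimes f^s = P(u\otimes f^s)$ for $P \in K[x]$. For the inductive step, by linearity I reduce to $P = a(x)\partial_i P'$ with $\operatorname{ord} P' \leq d-1$ and use the Leibniz identity
\[
(\partial_i P' u)\otimes f^{s+d} = \partial_i\bigl((P'u)\otimes f^{s+d}\bigr) - (s+d)f_i\bigl((P'u)\otimes f^{s+d-1}\bigr).
\]
The inductive hypothesis gives $(P'u)\otimes f^{s+d-1} \in M(u,f,s)$, whence also $(P'u)\otimes f^{s+d} = f\cdot((P'u)\otimes f^{s+d-1}) \in M(u,f,s)$, so both terms on the right-hand side lie in $M(u,f,s)$. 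Applying the claim to each summand of $w$ with $d := \max_j \operatorname{ord}(Q_j)$ yields $f^{\,l+d}w \in M(u,f,s)$, completing the argument. The crucial insight is pairing the pole order $l$ with the differential order $d$: one factor of $f$ per unit of differential order exactly absorbs the $sf_i f^{s-1}$-type corrections that the $\partial_i$'s produce when hitting $f^s$.
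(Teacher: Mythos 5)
Your proof is correct and, modulo organization, proceeds by the same underlying mechanism as the paper's. What you do differently is to isolate the single identification $\ker\tilde\rho_\lambda = H^0_{(f)}(M(u,f,\lambda))$ and derive both halves of the corollary from it; the paper instead treats the two directions separately: the "if" direction is immediate from Proposition \ref{prop:fs1}(2) (under the $b$-function hypothesis $M(u,f,\lambda)\cong D_X(u\otimes f^\lambda)$, a submodule of the $f$-saturated module $M\otimes_{K[x]}K[x,f^{-1}]f^\lambda$), and the "only if" direction is argued by taking a $P$ witnessing non-injectivity of $\tilde\rho_\lambda$, expressing $P(u\otimes f^s) = Q(s)(u\otimes f^{s-m})$, and showing $f^lP((u\otimes f^s)|_{s=\lambda})=0$ for $l$ large enough that $f^lQ(s)f^{-m}\in D_X[s]$. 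Your formulation has two advantages: it makes the deduction of the second assertion ("In general, $\iota(M(u,f,\lambda))\cong D_X(u\otimes f^\lambda)$") completely transparent, whereas the paper only says it "follows from this argument"; and it makes rigorous the step the paper compresses into "This means $f^lP((u\otimes f^s)|_{s=\lambda})=0$," since your decomposition $\sum (Q_ju)\otimes(s-\lambda)^jf^{s-l}$ with $Q_0u=0$ explicitly exhibits the element of $(s-\lambda)M(u,f,s)$. One small observation: your "absorption claim" is not really new---it is exactly the inductive statement proved inside Lemma \ref{lemma:expression}(1), specialized to $v=u$ and $k=-|\alpha|$ with the bound $N = l+d$ made explicit; you could have invoked that lemma directly, just as Proposition \ref{prop:fs1}(2) does with the phrase "in view of the proof of Lemma \ref{lemma:expression}." The reduction to $M$ $f$-saturated via $M(u,f,s)\cong\iota(M)(\iota(u),f,s)$ appears in Section 3 (not Section 2), but this is cosmetic.
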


\begin{proof}
We may assume $M$ to be $f$-saturated. 
First note that $M\otimes_{K[x]}K[x,f^{-1}]f^\lambda$ is $f$-saturated 
for any $\lambda \in K$ since it is isomorphic to $M[f^{-1}]$ as 
$K[x]$-module. Hence $M(f,u,\lambda) \cong D_X(u\otimes f^\lambda)$ 
is also $f$-saturated under the assumption on $b_{u,f}(s)$. 

Now assume $b_{u,f}(\lambda - k) = 0$ for some positive integer $k$. 
Then $\tilde\rho_\lambda$ is not injective. Thus there exists $P \in D_X$ 
such that $P((u\otimes 1)|_{s=\lambda}) \neq 0$ but 
$P(u\otimes u^\lambda) = 0$. The latter equality means that there exist 
$Q(s) \in D_X[s]$ and $m \in \N$ such that 
\[
P(u\otimes f^s) = Q(s)(u\otimes f^{s-m}) \mbox{ in $M\otimes_{K[x]}\Lsc$}, 
\quad Q(\lambda)(u\otimes f^{\lambda-m}) = 0 
\mbox{ in $M\otimes_{K[x]}K[x,f^{-1}]f^\lambda$}.  
\]
Take a sufficiently large $l \in \N$ so that $f^lQ(s)f^{-m}$ belongs to 
$D_X[s]$. Then we have
\[
f^lP(u\otimes f^s) = f^lQ(s)f^{-m}(u\otimes f^s), 
\qquad
f^lQ(\lambda)f^{-m}(u\otimes f^\lambda) = 0.
\]
This means $f^lP((u\otimes f^s)|_{s=\lambda}) = 0$. 
Hence $M(u,f,\lambda)$ is not $f$-saturated. 
The last statement also follows from this argument.  
\end{proof}


\begin{example}\rm
Set $n=2$ and write $x_1 = x$, $x_2 = y$. 
Let $u$ be the residue class of $1$ in $M = D_X/I$ with $I$ 
being the left ideal of $D_X$ generated by two operators
\begin{align*}
P_1 &= x(1-x)\dx^2 + y(1-x)\dx\dy, 
\qquad
P_2 = y(1-y)\dy^2 + x(1-y)\dx\dy. 
\end{align*}
This is Appell's hypergeometric system $F_1$ with all parameters equal to zero. 
The singular locus of $M$ is a line arrangement defined by 
$f := x(x-1)y(y-1)(x-y)$. Let $\iota : M \rightarrow M[f^{-1}]$ be the 
canonical homomorphism. Then $M[f^{-1}]$ is generated by $f^{-2}\iota(u)$ 
and $\iota(M)$ is given by
\[
\iota(M) = D_X\iota(u) = D_X/(D_X\dx\dy + 
D_X((1-x)\dx^2 - \dx) + D_X((1-y)\dy^2 - \dy)). 
\]
The $b$-function with respect to $u$ and $f$ is
\[
b_{u,f}(s) = (s+1)^3(s+2)^2\bigl(s + \frac23\bigr)^2
  \bigl(s + \frac43\bigr)^2\bigl(s + \frac53\bigr). 
\]
As to multiplicities we have 
\[
\mult M = 10,\quad \mult \iota(M) = 5,\quad \mult H^0_{(f)}(M) = 5, \quad
\mult M[f^{-1}] = 36. 
\]
It follows that $\mult H^1_{(f)}(M) = 31$. 
By the way, the multiplicity of $K[x,f^{-1}]$ is $12$. 
\end{example}

\begin{example}\rm
Set $X = K^4$ and let $M_A(\beta_1,\beta_2)$ be the $A$-hypergeometric system associated with 
the matrix
$A = \begin{pmatrix} 1 & 1 & 1 & 1 \\ 0 & 1 & 3 & 4 \end{pmatrix}$, 
which is taken from Example 4.3.9 of \cite{SST}. 
More concretely, $M_A(\beta_1,\beta_2) = D_X/H_A(\beta_1,\beta_2)$ with the left ideal 
$H_A(\beta_1,\beta_2)$ generated by operators
\begin{align*}&
x_1\partial_1 + x_2\partial_2 + x_3\partial_3 + x_4\partial_4 \beta_1, \quad
x_2\partial_2 + 3x_3\partial_3 + 4x_4\partial_4,
\\&
\partial_2\partial_4^2-\partial_3^3,\quad
\partial_1\partial_4-\partial_2\partial_3,\quad
\partial_2^2\partial_4-\partial_1\partial_3^2,\quad
\partial_1^2\partial_3-\partial_2^3
\end{align*}
with parameters $\beta_1,\beta_2 \in K$.
We have
\[
\mult M_A(0,0) = 16, \quad \mult M_A(1,2) = 17. 
\]
The singular locus of $M_A(0,0)$ and that of $M_A(1,2)$ is given by
\[
g(x) := 
x_1x_4(256x_1^3x_4^3+(-192x_1^2x_2x_3-27x_2^4)x_4^2-6x_1x_2^2x_3^2x_4-27x_1^2x_3^4-4x_2^3x_3^3) = 0. 
\]
The $b$-functions of $u :=\overline 1 \in M_A(1,2)$ and 
$x_1$, $x_4$ are 
\[
b_{u,x_1}(s) = 
b_{u,x_4}(s) = s(s+1)(s+2), 
\]
while the $b$-functions of $v :=\overline 1 \in M_A(0,0)$ and 
$x_1$, $x_2$ are both $(s+1)^2$. 
We can verify by the algorithm that  
$M_A(0,0)$ and $M_A(1,2)$ are $x_1$- and $x_4$-saturated. 
The computation of the localization with respect to $g$ is intractable.  
We do not know if $M_A(\beta_1,\beta_2)$ is $g$-saturated or not.  
We conjecture that the multiplicity of $M_A(\beta_1,\beta_2)$ is $16$ 
for all $(\beta_1,\beta_2)$ except $(1,2)$, as is the case with the holonomic rank 
(see \cite{SST}). 
\end{example}

\end{document}